\def\thm@space@setup{
  \thm@preskip=0.4cm 
  \thm@postskip=\thm@preskip 
}
\newtheorem{thm}{Theorem}[section]
\newtheorem{definition}[thm]{Definition}
\newtheorem{assumption}[thm]{Assumption}
\newtheorem{lemma}[thm]{Lemma}
\newtheorem{remark}[thm]{Remark}
\newtheorem{proposition}[thm]{Proposition}
\newtheorem{corollary}[thm]{Corollary}
\newtheorem{example}[thm]{Example}
\newcommand\Z{\mathbb{Z}}
\newcommand\N{\mathbb{N}}
\newcommand\R{\mathbb{R}}
\newcommand\PP{\mathbb{P}}
\newcommand{\En}{\mathcal{E}_n}
\newcommand{\Enfty}{\mathcal{E}_\infty}
\newcommand{\Sn}{\mathcal{S}_n}
\newcommand{\toL}{\,{\buildrel {d} \over \longrightarrow}\,}
\newcommand{\towd}{\,{\buildrel {wd \;} \over \longrightarrow}\,}
\newcommand{\lp}[1]{\left(#1\right)}
\newcommand{\lb}[1]{\left[#1\right]}
\newcommand{\la}[1]{\left|#1\right|}
\newcommand{\lcb}[1]{\left\{#1\right\}}
\newcommand{\qv}[1]{\left\langle#1\right\rangle}
\newcommand{\norm}[1]{\left|\left|#1\right|\right| }
\long\def\metanote#1#2{{\color{#1}\
\ifmmode\hbox\fi{\sffamily\mdseries\upshape [#2]}\ }}
\numberwithin{equation}{section}
\newcounter{keepeqno}
\title{Quenched coalescent for diploid population models with selfing
and overlapping generations}
\author{
Louis Wai-Tong Fan
\footnote{School of Data Science and Society, University of North Carolina, Chapel Hill, NC, USA}
\footnote{Department of Organismic and Evolutionary Biology, Harvard University, Cambridge, MA, USA}
\and
Maximillian Newman
\footnote{Department of Genetic Medicine, University of Chicago, Chicago, USA}
\and
John Wakeley%
\footnotemark[2]
}
\date{\today}
\begin{document}

\captionsetup{font=small,width=0.85\textwidth}

\maketitle

\abstract{ 
We introduce a general diploid population model with self-fertilization (or `selfing') and possible overlapping generations, and study the genealogy of a sample of $n$ genes  as the population size $N$ tends to infinity. Unlike the traditional approach in coalescent theory which considers the unconditional (annealed) law of the gene genealogies averaged over the population pedigree, here we study the conditional (quenched) law of gene genealogies given the pedigree. We focus on the case of high selfing probability and obtain that this conditional law 
converges  to a random probability measure. This measure is the random law of a system of coalescing random walks on an exchangeable fragmentation-coalescence process of \cite{berestycki04} which in general allows multiple mergers of ancestral lineages.  As a special case, it contains a system of coalescing random walks on an ancestral graph with binary mergers only, which is identical in structure to the previously described ancestral recombination graph and ancestral selection graph.  We use simulations of the ancestral process to show how the site-frequency spectrum of genetic data can depend strongly on the pedigree. 
Our convergence result is established by means of 
a characterization of weak convergence in distribution for random probability measures on 
Skorokhod spaces.
}

\section{Introduction}\label{S: intro}

Coalescent processes have been widely used as models of gene genealogies that describe the ancestral structure of a sample of $n$ genes, when the total population size $N$ is sufficiently large.
The Kingman coalescent \cite{kingman1982}, for instance, has been enormously impactful in the study of natural genetic variation in populations \cite{wakeley2009coalescent}. Its power stems from its remarkable robustness; indeed, a large number of population models were shown to have the Kingman coalescent or its variant as their scaling limit as $N$ tends to infinity \cite{Mohle1998a}.
Other  models such as  the coalescent with asynchronous multiple mergers  \cite{donnelly1999particle, sagitov1999general, pitman1999} and the coalescent with simultaneous multiple mergers \cite{MohleSagitov2003, schweinsberg, birkner2018coalescent}
have also been discovered as scaling limits under exchangeable models, when the number of offspring per individuals has very high variance, and have been applied to a number of different species \cite{FreundEtAl2023}. 
The simultaneous multiple-mergers coalescent, also called the $\Xi$-coalescent, is most relevant to this paper.

Traditionally, coalescent models are obtained by taking average over the population pedigree,  the graph that represents the total history of reproductive relationships in the population. Namely, implicit in the approach of papers in classical coalescent theory \cite{kingman1982, sagitov1999general, pitman1999, schweinsberg, birkner2018coalescent} is an annealing over all realizations of the pedigree to describe the distribution  of gene genealogies across unlinked loci in the genome. However, 
this tradition of averaging over the pedigree is questionable because there is only one population pedigree, and all genetic information across loci is passed through this same pedigree.
For example, in order for unlinked loci to have independent genealogies (as they must by definition), under this averaging they would also need to have independent pedigrees.  This is problematic because even unlinked loci are subject to the same pedigree. 

This conceptual flaw was not unrecognized \cite{ball1990,wakeleyetal2012,WakeleyEtAl2016,WiltonEtAl2017,ralph2019}, but  perhaps luckily, it turned out that the Kingman coalescent can still be applied under the standard assumptions of neutral coalescent model because then the conditional limit is still equal to the unconditional limit \cite{TyukinThesis2015}. However, when there are simultaneous multiple mergers, the conditional genealogy can be \textit{different from} the unconditional genealogy \cite{DFBW24,abfw25}.

\medskip
    
    We believe that, at least at the conceptual level, \textit{coalescent theory should start by conditioning on the pedigree}.
    This has been the basis of the mathematically rigorous works \cite{TyukinThesis2015, DFBW24, NFW25, abfw25} that  collectively marked the emerging \textit{quenched-coalescent theory}. We now give  a brief  account of these works before describing the main contribution of the present paper.

    In \cite{abfw25}, the authors study the diploid Cannings model introduced in \cite{birkner2018coalescent}, where selfing is excluded. This is a model in which the offspring distribution, described by the matrix $(V_{ij})$ of the number of offspring between all pairs of parents, is invariant under permutations of the parents. 
Under the same condition on the offspring distribution that guaranteed  annealed convergence to the $\Xi$-coalescent holds, 
    the authors in \cite{abfw25} showed that 
    the genealogies conditioned on the pedigree converge to an inhomogeneous $(\Psi,c_{\rm pair})$-coalescent, where $\Psi$ is a Poisson point process on $[0,\infty)\times (\Delta\setminus \{\bf 0\})$ with intensity $dt \otimes\frac{1}{\langle x,x \rangle}\Xi(dx)$. This limiting process consists of the independent superposition of a Kingman coalescent with rate $c_{\rm pair}=1-\Xi(\Delta\setminus \{\bf 0\})$ and the multiple merger events whose the timing and intensity are specified by $\Psi$. 
    The result in \cite{abfw25} generalizes earlier work in \cite{TyukinThesis2015} and \cite{DFBW24}.

    While the  model in \cite{birkner2018coalescent, abfw25} is quite general and captures a wide range of reproductive variance, it has two limitations from a biological perspective. 
    Firstly, it does not allow for selfing, which is an important evolutionary force 
found in  taxa including eukaryotic microbes and marine invertebrates \citep{SassonAndRyan2017,YadavEtAl2023} and common in plants \citep{AbbottAndGomes1989,HartfieldEtAl2017,TeterinaEtAl2023}.
    Selfing probabilities have a bi-modal distribution among plant species, with fewer species having $s\in(0.2,0.8)$  and some species reaching as high as 0.99 \cite{SchemskeAndLande1985,AbbottAndGomes1989,VoglerAndKalisz2001,BarriereAndFelix2005,GoodwillieEtAl2005,SellingerEtAl2020}.
    Secondly, the model assumes non-overlapping generations, a simplification that excludes biologically realistic scenarios in which parents may come from different age cohorts. Overlapping generations are common in many natural populations and can introduce temporal correlations and ancestral dependencies not captured by discrete-generation models.

In previous work \cite{NFW25} we considered a diploid Moran model with a high selfing probability $\alpha_N$, and obtained quenched limits for the coalescence time of a sample of size $n=2$. 
Conditioning coalescence on the pedigree revealed three markedly different behaviors, depending on how quickly $\alpha_N\to 1$.  The critical case, called `limited outcrossing' in \cite{NFW25}, is when $1-\alpha_N$ is of order $1/N$.  This will also be our focus here.

\medskip

This paper aims to contribute to building the emerging \textit{quenched-coalescent theory}, by going beyond the existing mathematical work \cite{TyukinThesis2015, DFBW24, NFW25, abfw25}. We introduce a general diploid model with overlapping generations that extends the haploid model of \cite{sargsyan2008coalescent} and the diploid models of \cite{birkner2018coalescent} and \cite{NFW25}. Our model explicitly incorporates a selfing rate parameter and includes the diploid Moran-type models with selfing in \cite{NFW25, coron2022pedigree, linder2009} as special cases. Our main result significantly strengthens the analysis in \cite{NFW25}, in the `limited outcrossing' regime, by considering arbitrary sample size 
    $n$ and a substantially broader class of models. Moreover, it complements the results of \cite{abfw25, TyukinThesis2015, DFBW24} by identifying a distinct class of limiting conditional coalescent processes that arises in the high selfing regime.

    The new class of coalescent models that arise as $N\to\infty$ is a family of coalescing random walks on a directed random graph, which we call  a $Q$-$\lambda$ graph in this paper. This random graph is a subgraph of
    an exchangeable fragmentation-coalescence process (EFC) introduced in  \cite{berestycki04}, where $Q$ is the rate matrix for coalescence and $\lambda$ is the fragmentation rate at which each node spits into two. The parameter $\lambda$ is the relative rate of outcrossing to pairwise coalescence as $N\to\infty$. In the regime where the classical (annealed) limit is a time-rescaled Kingman, we find that these $Q$-$\lambda$ graphs correspond to the ancestral graphs with binary mergers that have been used to describe genetic ancestries in the presence of recombination \cite{hudson1983, GriffithsAndMarjoram1997} and selection \cite{kroneneuhauser1997}.
    As in \cite{NFW25}, time is rescaled so that any given pair of sample lineages coalesce with rate $2$ in the annealed limit in this paper. This is slightly different from that in \cite{abfw25} (where a given pair coalesce with rate 1) and that in \cite{DFBW24} (which involves the parameter $\psi$); see Remarks \ref{Rk:Timescale1} and \ref{Rk:Timescale2} for detail.
    The proof of the main result (Theorem~\ref{T: quenched_limited_outcrossing}) involves a novel characterization of  weak convergence in distribution  of random probability measures on the Skorokhod space $\mathcal{D}\lp{\R_+, E}$ for any locally compact Polish space $E$, and a general theorem about quenched convergence of Markov processes on a suitably enriched space of partitions.     
    While tightness and convergence of (deterministic) probability measures on  Skorohod spaces have been studied (see \cite{bk10,kouritzin2016}) by means of  convergence determining and separating classes of functions, convergence of \textit{ random } probability measures on  Skorohod spaces are less explored.

\medskip
From the application standpoint,
    the main motivation of coalescent theory (and our quenched-coalescent theory) is to describe the patterns of genetic variation expected under various biological scenarios to provide frameworks for statistical inference about past events and processes affecting populations. 
    In a given population,  gene genealogies are tree structures which emerge from tracing the ancestral lines of these sample backwards in time until the most recent common ancestor (MRCA). Mutations occurred  in the past, along the ancestral lines, result in genetic diversity among the sample. 
    For example, the site-frequency spectrum (SFS) is a commonly used measure of genetic variation upon which statistical  inferences are based \cite{BustamanteEtAl2001,Achaz2009,EldonEtAl2015,LiuAndFu2015,GaoAndKeinan2016,FerrettiEtAl2017,FreundEtAl2023}.  For a sample of $n$ genomes, the SFS records the number of polymorphic sites where a mutant base is found in $r \in \{1,\ldots,n-1\}$ copies \cite{Tajima1989,BravermanEtAl1995,Fu1995}.  Because per-site mutation rates are typically very small and the number of sites is large, the SFS is taken to reflect the total length of branches in the gene genealogy with $r$ descendants in the sample, or which are ancestral to $r$ of the samples. See Figure \ref{fig:tau} for an illustration of these branches when $r=3$ and $n=10$. Different biological phenomena, such as population growth and natural selection, lead to different coalescent predictions for the SFS.
Our main result sheds new light on the SFS of genetic data, by specifying how SFS depends on the pedigree. More precisely, under the conditional coalescent, the SFS should be viewed as a conditional SFS given the single pedigree as a latent variable. In Figure \ref{fig:sfs}, we provide simulations for the conditional SFS given 5 different pedigrees and thus illustrate how the pedigree can impact the SFS of the data.

\medskip
\noindent
{\bf Organization of this paper. }
    Section~\ref{S:GeneralModel} introduces our diploid exchangeable model with selfing and overlapping generations. An annealed version of our main result, akin to classical results of coalescent theory such as those of \cite{kingman1982, Mohle1998a, pitman1999, sagitov1999general, schweinsberg}, is presented in Section~\ref{S: coalescent_description_general}. Section~\ref{S: main_result} presents the main quenched convergence result for the coalescent of our exchangeable model model conditional on the pedigree. In Section~\ref{S: applications} we present applications of our main convergence result and illustrate how the SFS can depend strongly on the pedigree. The remainder of the paper focuses on the tools and calculations necessary to prove the main results: weak convergence of random measures in distribution in Section~\ref{S: weak_convergence_criteria}, and a suitable notion of convergence of the random pedigree and continuity of the coalescent law's dependence thereon in Section~\ref{S: main_result_proof}.

\section{An exchangeable diploid model with selfing and overlapping generation}\label{S:GeneralModel}

    \subsection{The model}\label{SS: model}
    
    In this paper, we introduce a diploid, monoecious, panmictic (well-mixed and randomly mating) population of constant size $N$, evolving in discrete time-steps with overlapping generations. This model generalizes the diploid  model in \cite{abfw25,birkner2018coalescent} to allow overlapping generations and selfing, and  extends the haploid model in \cite{sargsyan2008coalescent} to diploid populations while supporting general offspring distributions.
    
    Precisely, 
    the population model is specified by 
    a deterministic number $\alpha_N\in [0,1]$ and 
    the joint distribution of a random variable $K_N$ and a random symmetric matrix $\textbf{V}= (V_{i,j})_{1\leq i,j\leq N}$.    
The number $\alpha_N$ represents the \textit{selfing probability} for each offspring, and $K_N$ and $\textbf{V}$ represent, at an arbitrary timestep,  the \textit{total offspring number} and the \textit{pairwise offspring numbers} respectively. 
We assume the following:  
        \begin{itemize}
        \item $K_N$ and all entries of $\textbf{V}$ take values in $\{0,1,2,\ldots,N\}$.
        \item The total number of offspring satisfies $\sum_{i\leq j}^N V_{i,j} = K_N$.
        \item The full matrix $\textbf{V}$ is exchangeable, i.e.,
        \[
        \left(V_{i,j}\right)_{1\leq i,j\leq N} \overset{d}{=} \left(V_{\sigma(i),\sigma(j)}\right)_{1\leq i,j\leq N}
        \]
         for any permutation $\sigma$ of $[N] = \{1,2,\ldots, N\}$.
    \end{itemize}

We consider discrete time-steps indexed by $k \in \mathbb{Z}_+ = \{0,1,2,\ldots\}$, where $k=0$ is the present, $k=1$ the previous time-step, and so on \textit{backward into the past}. Let $\big \{(K^{(k)}_N, \textbf{V}^{(k)})\big \}_{k \in \Z_+}$  be a sequence of i.i.d.\  random variables that have the same distribution as $(K_N,\textbf{V})$. Reproduction events in different time-steps are taken to be independent, and occur as follows:
    
For each $k\in\Z_+$, $K^{(k)}_N$ individuals  are chosen uniformly without replacement, among the $N$ individuals from the $k$-th time-step, to be offspring.   
To describe parentage, we suppose that the
individuals at each time-step are labeled by $[N]$. Let $V^{(k)}_{i,j}$ be the number of offspring in the $k$-th time step in the past, produced by the pair of individuals $(i,j)$ in the $(k+1)$-th time-step in the past. 
The $K_N^{(k)}$ individuals defined by the entries of $\textbf{V}^{(k)}$ constitute the new individuals in time-step $k$. Each individual has two sets of chromosomes, as it is a diploid population. Genetic lineages at an autosomal locus are transmitted according to Mendel's law of random segregation, which means each gene copy in the offspring chooses independently from the two gene copies in the parent from which the gene copy is inherited. 
    
The remaining $N - K_N^{(k)}$ individuals are carried over unchanged (without random segregation) from time-step $k+1$, which  can induce \textit{overlapping generations}. 
    
We further assume that 
each of the $K^{(k)}_N$ offspring independently chooses to have either a single parent (selfing) or   two distinct parents (outcrossing), with probabilities $\alpha_N$ and $1-\alpha_N$ respectively. Therefore, conditional on $K_N^{(k)}$, the number of selfed offspring $\sum_{i=1}^N V_{i,i}^{(k)}$ is  an 
 independent binomial random variable: $\mathrm{Bin}(K_N^{(k)}, \alpha_N)$.   In other words, we assume that $(K_N,\textbf{V})$ satisfies
 \[
 \sum_{i=1}^N V_{i,i}  \sim \mathrm{Bin}(K_N, \alpha_N) \qquad\text{given }K_N.
 \]

         \FloatBarrier
        \begin{figure}
            \centering
            \includegraphics[width=0.5\linewidth]{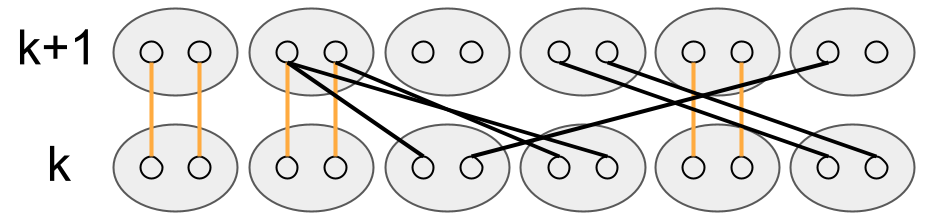}
            \caption{An illustration of our population model between time-steps $k+1$ and $k$ in the past, with size $N=6$. There are $K_N = 3$ offspring which are individuals 3,4,6 (from left to right) in timestep $k$. Bold edges represent reproductive relationships. Further, two of the offspring are reproduced by selfing. Individuals $1, 2, 5$, whose edges are marked in orange, simply persisted between consecutive time-steps and are not offspring. One can read  $(V_{i,j})$ from the pedigree. We see that $V_1 = 0$ as the first parent (the first individual in timestep $k+1$) has no offspring, $V_2 = 2$ as the second parent has two offspring (individuals 3 and 4 in timestep $k$). Also, $V_{2,2} =V_{4,4} = 1$.}
            \label{F: one_step}
        \end{figure}
        
See Figure~\ref{F: one_step} for  a realization of one time-step of this process when $N=6$.  

The population model described above includes two related but distinct classes of models in the literature as special cases.
    \begin{example}[The diploid Cannings model in \cite{birkner2018coalescent}]\rm
    When $K_N=N$ (non-overlapping generations) and $\alpha_N=0$ (no selfing),    
    our model  reduces to the  model introduced in \cite{birkner2018coalescent} and studied in \cite{abfw25}.
    \end{example}

    \begin{example}[A diploid Sargasyan-Wakeley model]\rm \label{Eg:SW}
        
    Our model also extends the model in \cite{sargsyan2008coalescent} by modeling diploid, not just haploid, individuals and by allowing a more general offspring distribution. 
    
    Suppose the random symmetric  matrices $\{V^{(k)}\}_{k\in\Z_+}$ satisfy the following extra assumption. Namely, their distribution is parametrized by a random number $P_N\in \{2,3\ldots,N\}$ representing the number of  \textit{potential} parents at a timestep.
        
    Let $\big \{(K^{(k)}_N,\,P^{(k)}_N)\big \}_{k \in \Z_+}$  be a sequence of i.i.d.\  random vectors that have the same distribution as
    $(K_N,\,P_N)$.
    For each $k\in\Z_+$, as before $K^{(k)}_N$ individuals  are chosen uniformly without replacement from the $k$-th time-step  to be offspring. Then, suppose that
\begin{enumerate}
    \item[(i)] $P^{(k)}_N$ individuals are chosen uniformly without replacement from the $(k+1)$-th time-step to be \textit{potential} parents of the $K^{(k)}_N$ offspring, and 
    \item[(ii)]  the actual parent(s) of the offspring is (are) chosen uniformly without replacement from the $P^{(k)}_N$ \textit{potential} parent(s).  
\end{enumerate}
             
            When $K_N=P_N=N$, this is the Wright-Fisher model with selfing considered in \cite{Mohle1998a, NordborgAndDonnelly1997} and
            \cite{kogan2025correlation} with free recombination ($r_N=1/2$) and $\alpha_N=s_N$.  
            The model in \cite{DFBW24} with $\psi=1$ is exactly the case when $K_N=N$ deterministically and $P_N$ is a random variable taking values $2$ and $N$ with probabilities $\lambda/N^{\theta}$ and $1-\lambda/N^{\theta}$ respectively.

    The model of \cite{NFW25} corresponds to the special case when $K_N=1$ deterministically. 
            When $\alpha_N = \alpha \in [0,1)$ for all $N$, this corresponds to the diploid Moran model with selfing considered in \cite{linder2009}. When $\alpha = 0$ this corresponds to the model as in \cite{coron2022pedigree}.
            The model in \cite{birkner2013} corresponds to our case when $\alpha_N=0$, and when $(K_N,P_N)$ is equal to $(1,2)$ and $([\psi N],2)$ with probabilities $1-\varepsilon_N$ and $\varepsilon_N$ respectively.
            There is a slight difference between our model and these previous models, which assume that the indices of the potential parents and those of the offspring are disjoint. 
    \end{example}
    
    \medskip

    To simplify notation, we shall omit the subscript $N$  when there is no confusion. For example, we will write  $(\alpha, K, K^{(k)})$ instead of $(\alpha_N,K_N, K^{(k)}_N)$. 

Let  $V_i:=\sum_{j=1}^N V_{i,j}$ be the total number of offspring for individual $i$, and $\tilde{V}_i:=V_{i,i} + V_i$. Then  $\sum_{i=1}^N \tilde{V}_i = 2K$ and the total genetic contribution of the $i$th individual to the next time step is
    \begin{equation}\label{E:Vi}
        \frac{\tilde{V}_i}{2N}:= \frac{V_{i,i} + V_i}{2N}
        = \frac{1}{2N}\Bigl(2V_{i,i} + \sum_{j \neq i} V_{i,j}\Bigr),
    \end{equation}
because the genetic contribution of a parent by selfing is twice that via outcrossed offspring.
    
    In the absence of selfing ($\alpha_N=0$), we have $\tilde V_i=V_i$, recovering the offspring-frequency weights of \cite{MohleSagitov2003} (see also \cite{birkner2018coalescent,abfw25}). These frequencies provide a convenient device to  express one-step coalescence probabilities (e.g.\ pairwise and triple mergers) as explicit polynomials in $\{\tilde{V}_i\}$ with Mendelian coefficients.
    
    \subsection{The pedigree as important latent variable of the population}

        The population dynamics described in Section~\ref{SS: model} give rise to a random directed graph $\mathcal{G}_N$ 
        that encodes the population pedigree, i.e. the reproductive relationships among all individuals. We give a formal definition below and offer the left panel of Figure \ref{F: several_realizations} as an illustration.   

        Each individual is diploid and carries two copies of each autosomal locus. Genetic lineages are transmitted through the population pedigree via \textbf{Mendelian segregation}: each gene copy in the offspring randomly inherits one allele from the corresponding gene copies in the designated parent (or one of the two gene copies in the case of selfing). The pedigree is described explicitly in Definition~\ref{D: pedigree}.

        \FloatBarrier

        \begin{figure}
            \centering
            \includegraphics[width=0.5\linewidth]{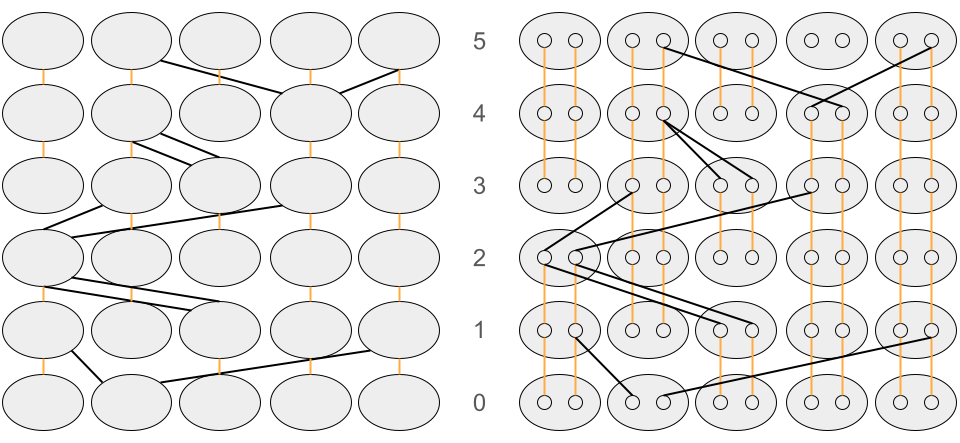}
            \caption{(Full single-locus population process on the right and the corresponding pedigree on the left). A realization of our population process with $N=5$ individuals from time-steps $k=0$ to $5$ in the past (right panel). The corresponding pedigree is shown on the left. Here we consider the Moran model in \cite{NFW25}, where $K_N=1$ deterministically. The black lines correspond to reproductive relationships while the yellow lines correspond to an individual persisting from one time-step to the next. The yellow lines give rise to overlapping generations.}
            \label{F: several_realizations}
        \end{figure}

        \begin{definition}[Pedigree] \label{D: pedigree}
        The \textit{population pedigree}, or simply the \textit{pedigree}, refers to  an undirected multi-graph 
        $\mathcal{G}_N$ with vertex set $[N] \times \Z_+$, where a vertex $(\ell,k)$ represents the individual with index $\ell$ at the $k$-th time-step.  The edges of $\mathcal{G}_N$ are between vertices in consecutive time-steps.
        For each $k\in\Z_+$, there are two edges  connecting each of the $K^{(k)}_N$ offspring with its parent(s). If the offspring has two distinct parents (outcrossing), then there is one edge to each parent; if the offspring has only one parent (selfing), then there are two edges  connecting  the offspring to its parent (hence $\mathcal{G}_N$ is a multigraph).  
        For each of the remaining $N-K^{(k)}_N$ individuals, there is a single edge connecting to itself in the consecutive time-steps. 
        \end{definition}

        The importance of the pedigree, as described in greater detail in the section ``Previous Work on Pedigrees" of \cite{DFBW24}, is that it encapsulates population dynamics that are common to every single locus on the genome. Even loci extremely far apart are coupled by its dynamics. In particular, gene genealogies far apart on the genome are described by conditionally independent realizations of the ancestral process with respect to the pedigree. Gene genealogies, even conditional on the pedigree, are stochastic due to Mendelian randomness. This is demonstrated in Figure~\ref{F: same_pedigree_different_genealogies}. Classical coalescent theory implicitly averages over realizations of the pedigree to determine the \textit{average} gene genealogy. However, simulations have demonstrated ways in which the pedigree can affect the structure of gene genealogies \cite{ball1990,wakeleyetal2012,WakeleyEtAl2016}.  Further, mathematically rigorous works have begun to identify new scaling limits of coalescent processes conditional on the pedigree \cite{TyukinThesis2015, DFBW24, NFW25, abfw25}.

        \begin{figure}
            \centering
            \includegraphics[width=0.5\linewidth]{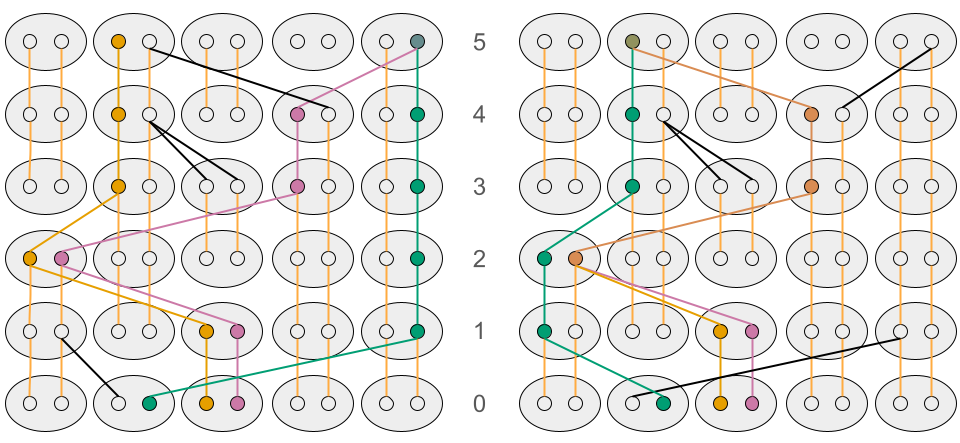}
            \caption{(Same pedigree but different genealogies). In the figure we see two different genealogical histories. We focus on a sample of $n=3$ lineages and trace their history backwards in time. Both histories are subject to the same pedigree, that displayed on the right of Figure~\ref{F: several_realizations}, and yet the history of the sample lineages backwards in time are distinct.}
            \label{F: same_pedigree_different_genealogies}
        \end{figure}

\section{Ancestral lines and the coalescent process for a sample}\label{S: coalescent_description_general}

Next, we let $n \in \{2,3,\ldots\}$ be the sample size and consider the lineages of $n$ sampled gene copies under our model. We further supposed that these $n$ distinct gene copies come from $n$ \textit{distinct individuals}, one gene copy from each, so that $n \leq N$ whereas in total there are $2N$ gene copies available in the population. Our analysis of the ancestral process of the sample relies on this assumption that ancestral lineages begin in different individuals. Remark~\ref{Rk:sameIndiv} shows how our results may be extended to other types of samples, given our focus on high probabilities of selfing ($\alpha_N \to 1$). In what follows, we use $X$ to denote labels of gene copies and $\hat{X}$ to denote labels of individuals containing them.

    \subsection{The ancestral partition process for a sample of size \texorpdfstring{$n$}{}}

For $k\in\Z_+$ we let $X_i^N(k)$ be the gene ancestral to $X_i(0)$  $k$ time-steps in the past. Under our model, we can write
        \begin{equation}\label{E:Alines}
            X_i^N(k) := \lp{M_i^N(k), \hat{X}_i^N(k)} \in \{0,1\} \times [N], \quad i \in [n], 
        \end{equation}
        where $\hat{X}_i^N(k)$ is the  individual in which the gene $ X_i^N(k)$ resides, and 
        \begin{equation*}
            M_i^N(k+1) =
            \begin{cases}
                \text{is an independent Bernoulli$\lp{\frac{1}{2}}$ random variable}&, \text{ if $\hat{X}_i^N(k)$ is an offspring}\\
                M_i^N(k) &, \text{ if $\hat{X}_i^N(k)$ is not an offspring}
            \end{cases}.
        \end{equation*}
Given the pedigree between time-steps $k+1$ and $k$, $\hat{X}_i^N(k+1)$ is chosen uniformly among the vertices in time-step $k+1$ to which $\hat{X}_i^N(k)$ is adjacent.

        For any given sample of genes at $t=0$ and for any given pedigree, there generally are many possible ways in which to trace these samples backwards in time. This is illustrated in Figure~\ref{F: same_pedigree_different_genealogies}, where the (labeled) tree structure of  $n=3$ ancestral lines (genetic lineages) backwards in time is evidently not fully determined by the pedigree.

        \begin{definition}[Ancestral line]
        For each $1\leq i\leq n$, the process   $X_i^N = (X_{i}^N(k))_{k\in\Z_+}$ is called the \textbf{ancestral line} of the $i$-th  sampled gene $X_{i}(0)$. 
        \end{definition}

        Each process $X_i^N$ is a discrete-time Markov chain taking values in  $\{0,1\}\times [N]$. Furthermore, these $n$ processes 
        $\left(X_{i}^N\right)_{1 \leq i \leq n}$
        are correlated Markov chains that form a family of coalescing random walks on $\{0,1\} \times [N]$. In particular, for all $i,j \in [n]$,
 $X_{i}^N(k+1) = X_{j}^N(k+1)$ whenever $X_{i}^N(k) = X_{j}^N(k)$.
        See Figure~\ref{F: same_pedigree_different_genealogies} for an illustration.
        If we condition on the pedigree, then these random walks are conditionally independent, and the transition probabilities conditional on the pedigree is given by the Mendelian randomness. 

        \medskip

The size of the state space of the family of coalescing random walks, namely  $(\{0,1\} \times [N])^n$, is of order $N^n$. It is customary to reduce the state space 
by ignoring the indices of the ancestral individuals and only keep track of the coalescence of sample genealogies as we go backward in time. This is done via partitions 
of $[n]$, or equivalently, equivalence relations on $[n]$.
For $1 \leqslant i, j \leqslant n$,
$k \in \Z_+$, we write $i \sim_k j$ if and only if samples $i$ and
$j$ descend from the same chromosome $k$ time-steps ago, i.e.,
\begin{equation}
  \label{eq:xinNm.alt}
  i \sim_k j \quad \Longleftrightarrow
  \quad X_i(k) = X_j(k). 
\end{equation}

This way, one obtains a stochastic process (called a coalescent process) with state space $\mathcal{E}_n$, the space of partitions of $[n]$.     
The initial state of this coalescent process is the partition into singletons $\xi_0^n:=\{\{i\}\}_{1 \leq i \leq n}$, since we sampled $n$ distinct gene copies, and will end up being the partition into a single block $\mathbf{1}_n := \{[n]\}$ which is the most recent common ancestor (MCRA) of the sample. 

For our diploid population, we need to account for which ancestral individuals contain two ancestral gene copies.
For this, we use notation from~\cite{MohleSagitov2003} and define the ancestral process on the state space
\begin{equation*}
  \mathcal{S}_n= \big \{ \left\{\left(C_1,C_2\right),\ldots,\left(C_{2x-1},C_{2x}\right),C_{2x+1},\ldots,C_b\right\} :
    b \in [n], 1 \leqslant x \in \lfloor {b}/{2} \rfloor, \{C_1, \ldots, C_b\} \in \mathcal{E}_{n} 
\big \},
\end{equation*}
where $\lfloor x\rfloor$ is the largest integer less than or equal
to $x$. We equip both spaces $\mathcal{E}_n$ and
$\mathcal{S}_n$ with the discrete topology.
Hence, each element $\xi\in \mathcal{S}_n$ is of the form
        \begin{equation}\label{E: xi_definition_sn}
            \{(C_1, C_2), (C_3, C_4), \ldots, (C_{2x-1}, C_{2x}), C_{2x+1}, \ldots, C_b\},
        \end{equation}
        where $\{C_1, C_2,\ldots, C_b\}\in\mathcal{E}_n$. We let  $x := \norm{\xi}$ be the number of individuals in a population that contain two sample lineages, and $b=|\xi|$ be the number of lineages (or blocks) remaining in a sample.  Note that $\mathcal{E}_n$ corresponds to a subset of $\mathcal{S}_n$. Clearly, $\xi\in \mathcal{E}_n$ if and only if $\norm{\xi}=0$.

        \begin{definition}[ancestral process]\label{D: ancestral}
            We define an $\mathcal{S}_n$-valued stochastic  process
            $\chi^{N,n}=(\chi^{N,n}(k))_{k\in\Z_+}$ as follows: 
            for $k \in \Z_+$, 
\begin{itemize}
    \item             $i$ and $j$ are in the same block in $\chi^{N,n}(k)$ if and only if  $X_i(k) = X_j(k)$, i.e. the ancestral lines of the $i$-th and the $j$-th samples coalesced  $k$ time-steps in the past, and
        \item             two blocks are in a set together in $\chi^{N,n}(k)$ if and only if  $\hat{X}_i(k) = \hat{X}_j(k)$, i.e.  the two ancestral lineages corresponding to these blocks are in the same individual in the population $k$ time-steps in the past.
\end{itemize}
We call this process $\chi^{N,n}$ the \textbf{ancestral process} of the sample.
        \end{definition}

We will investigate the convergence of a time-rescaling of $\chi^{N,n}$ with a random time change $S$ defined by
        \begin{equation}\label{E:TimeChange}
            S(k) := \sup \lcb{l \in \{0,1,2,\ldots,k\}:\, \chi^{N,n}(l) \in \mathcal{E}_n },
        \end{equation}
in other words the first time-step in the past that the ancestral lineages are not all in distinct individuals. Recall that the sample begins in this totally dispersed state by assumption: $\chi^{N,n}(0) \in \mathcal{E}_n$ in the present notation.  Thus $S(k)$ is well-defined, as the time for the sample to leave the subset $\mathcal{E}_n$ of $\mathcal{S}_n$.

We define 
        $\widetilde{\chi}^{N,n}$ to be the random time-change of $\chi^{N,n}$ as follows:
        \begin{equation*}
            \widetilde{\chi}^{N,n}(k) := \chi^{N,n}(S(k)).
        \end{equation*}    
The $\mathcal{E}_n$-valued process $\widetilde{\chi}^{N,n}$ is non-Markovian. However, in the limit $N\to\infty$, the random time-change \eqref{E:TimeChange} not only keeps the process to live in the state space $\mathcal{E}_n$ for which our convergence method works, but also prevents any accumulation of jumps of our rescaled process; see Remark \ref{Rk:randomtimechange}.

\subsection{An annealed scaling limit}
    
To connect to classical results in coalescent theory, including \cite{kingman1982, Mohle1998a, pitman1999, sagitov1999general, schweinsberg,birkner2018coalescent}, we shall consider the  unconditional (a.k.a. annealed) law, one that averages over the random pedigree, of the ancestral process. 

    Let $c_N$ 
    be the probability that two randomly chosen genes \textit{from two distinct
            individuals} coalesce in one time-step in the past. Then
    \begin{align}
    c_N = & \mathbb{E}\left[\frac{(N-K)K}{N^2\,(N-1)}\right]
    \;+\;
    \frac{1}{\binom{N}{2}}\,
    \mathbb{E}\!\left[
    \sum_{i=1}^N\!\left(
    \frac{1}{2}\binom{V_{i,i}}{2}
    +\frac{1}{4}\,V_{i,i}\!\sum_{j\neq i}V_{i,j}
    +\frac{1}{8}\binom{\sum_{j\neq i}V_{i,j}}{2}
    \right)
    \right] \notag\\
    =& \mathbb{E}\left[\frac{(N-K)K}{N^2\,(N-1)}\right]
    \;+\;
    \frac{1}{\binom{N}{2}}\,
    \mathbb{E}\!\left[
    \frac{1}{16}\sum_{i=1}^N \bigl(\tilde V_i^{\,2}-\tilde V_i\bigr)
    -\frac{\sum_{i=1}^N V_{i,i}}{8}
    \right], \label{E:cN_general}
    \end{align} 
    where the first term comes from one newborn and one carryover, and the second term comes from two newborns. This formula generalizes that in 
    \cite[Equation 1.4]{birkner2018coalescent}.

Convergence will be established for the  time-rescaled ancestral process  $\lp{\bar{\chi}^{N,n}(t)}_{t \in \R_+}$ defined by
        \begin{equation}\label{E: bar_definition}
            \bar{\chi}^{N,n}(t) = \widetilde{\chi}^{N,n}\lp{\lfloor t c_N^{-1} \rfloor}= \chi^{N,n}\lp{S\lfloor t c_N^{-1} \rfloor}.
        \end{equation}
We can and will consider this process $\bar{\chi}^{N,n} = \lp{\bar{\chi}^{N,n}(t)}_{t \in \R_+}$ as a random variable taking value in the Skorokhod space $\mathcal{D}\lp{\R_+, \En}$ equipped with the $J_1$ topology (see \cite{ethier2009markov}).

\begin{remark}\label{Rk:randomtimechange}\rm
    A main purpose of the random time-change $S$ is to avoid an accumulation of jumps (as $N\to\infty$) that would prevent convergence of the annealed coalescent in the $J_1$ topology. 
    Because selfing can coalesce distinct blocks in the same individual on a time-scale much shorter than the coalescence time-scale, 
    in the absence of this random time-change $S$ there will be an accumulation of jumps as $N\to\infty$. 
    Concretely, consider the state $\xi = \{(\{1\}, \{2\}), (\{3\}, \{4\})\}$. Because we allow for selfing, the process   $\left(\chi^{N,n}\lp{\lfloor t c_N^{-1} \rfloor}\right)_{t\in\R_+}$, beginning in state $\xi$, could transition to the state $\{(\{1\}, \{2\}), \{3,4\}\}$ in one time-step, and then $\{\{1\}, \{2\}, \{3,4\}\}$ in the next time-step. These two jumps, close together in the $c_N$ time-rescaling, would prevent convergence in $J_1$.
    
    The issue of accumulation of jumps was dealt with in \cite{birkner2018coalescent} by considering the complete dispersal map, which views the blocks in the same individual as distinct. This works in \cite{birkner2018coalescent} because there is no selfing there, and so with probability tending to $1$ the blocks will all disperse before there is any additional overlap of blocks in the same individual.

\end{remark}

We introduce three assumptions on the asymptotic behaviors of our model as $N\to\infty$ below. The first assumption ensures a continuous-time limiting model.
        
    \begin{assumption}\label{A:c_N}
    Suppose $\lim_{N\to\infty} c_N=0$, or equivalently, $\lim_{N\to\infty} \frac{\mathbb{E}[V_1^2]}{N}=0$ where  $V_i:=\sum_{j=1}^N V_{i,j}$ is the total number of offspring for individual $i$.
    \end{assumption}
    
Asymptotically, one unit of (continuous) time corresponds to $\lfloor c_N^{-1} \rfloor$ time-steps. 
To quantify the time-scale on which distinct sample lineages in the same individual disperse, we let $d_N$ denote the probability that, in a single time-step in the past,  two sample lineages in the same individual would disperse into two different individuals. Precisely,
        \begin{equation}\label{E: d_N_Def}
           d_N := \PP\lp{\chi^{N,2}(1) = \Big\{\{1\},\{2\}\Big\} \mid \chi^{N,2}(0) = \Big\{ (\{1\},\{2\} ) \Big\}} 
           = \frac{1-\alpha_N}{N}\mathbb{E}\lb{K_N}.
        \end{equation}
        We refer to $d_N^{-1}$ as the dispersal or outcrossing timescale.

The present work focuses on the asymptotic regime where the outcrossing timescale is comparable to, or slower than, the coalescence timescale. We refer this as the
\textit{limited outcrossing regime}, which generalizes the setting of the Moran model studied in \cite{NFW25}. This regime is formalized in Assumption \ref{A: timescale} below. 
         \begin{assumption}\label{A: timescale}
            We assume that $\alpha_N\to 1$ and $d_N c_N^{-1} \to \lambda \in \R_+$ as $N\to\infty$. 
        \end{assumption}
Assumption \ref{A: timescale} together with
\eqref{E: d_N_Def} implies that 
\begin{equation} \label{E:alpha to 1}
1-\alpha_N 
\;=\; \frac{N\,c_N}{\mathbb{E}[K_N]}\,\lambda\,(1+o(1))\to 0
\end{equation}
as $N\to\infty$. Hence,  $\alpha_N\to 1$ at a rate proportional to $ \frac{N\,c_N}{\mathbb{E}[K_N]}$ when $\lambda>0$,

\begin{remark}\rm
It is notable that the assumption  $d_N c_N^{-1} \to \lambda \in \R_+$  together with Assumption \ref{A:c_N} implies that the selfing probability $\alpha_N$ tends to 1 in many cases including the case when $K_N=N$ (e.g. Wright-Fisher model) and $K_N=O(1)$ (e.g. Moran model). Indeed, 
\eqref{E: d_N_Def} and the assumption $d_N c_N^{-1} \to \lambda$  imply the equality in \eqref{E:alpha to 1}.
Assumption \ref{A:c_N} then implies that $\alpha_N \to 1$ 
if $\mathbb{E}[K]$ is of order $N$. On other hand, note that \eqref{E:cN_general} and the fact $\sum_{i=1}^N \tilde{V}_i = 2K$ implies that $c_N\leq C\,\frac{\mathbb{E}[K+K^2]}{N^2}$ for some constant $C$ independent of $N$. Hence $\alpha_N \to 1$ if $\frac{\mathbb{E}[K^2]}{N\mathbb{E}[K]}\to 0$. 
\end{remark}

\medskip

Let $F_{\rm hap}: \Sn \to \En$ denote the \textbf{haploid map} defined as follows: for
        \begin{equation*}
            \xi = \{(C_1, C_2), \ldots, (C_{2x-1}, C_{2x}), C_{2x+1}, \ldots, C_{b}\}
        \end{equation*}
        we define
        \begin{equation}\label{E: hap_map}
            F_{\rm hap}\lp{\xi} := \{C_1 \cup C_2, \ldots, C_{2x-1}\cup C_{2x}, C_{2x+1}, \ldots, C_b \}.
        \end{equation}        
This map announces that lineages in the same individuals have coalesced, essentially reducing a diploid model to a haploid model. This map is different from the ``complete dispersal" map ${\rm cd}: \Sn \to \En$ used in \cite{birkner2018coalescent,abfw25}.

Our third and last assumption is  on the generator of the ancestral process. Let 
        \begin{equation}\label{Def:onestep}
    \mathfrak{p}_{\xi\eta}^{N,n} := \PP\lp{\chi^{N,n}(1) = \eta \mid \chi^{N,n}(0) = \xi}
        \end{equation}
        be the one-step transition probabilities  for the ancestral process $\chi^{N,n}$, and
define the matrix $\mathit{H}_{N,n} = \lp{h_{\xi\eta}^N}_{\xi, \eta \in \En}$ by
        \begin{equation*}
            h_{\xi\eta}^N :=  \PP\lp{  F_{\rm hap}\left(\chi^{N,n}(1)\right) = \eta \,\Big|\, \chi^{N,n}(0) = \xi} =\sum_{\zeta \in \Sn :\, F_{\rm hap}(\zeta) = \eta} \mathfrak{p}_{\xi\zeta}^N.
        \end{equation*}

        \begin{assumption}\label{A:Q_Nn}
            For any integer $n\geq 2$, the limit
            \begin{equation*}
              Q_n:= \lim_{N\to\infty}  \frac{1}{c_N}\lp{\mathit{H}_{N,n} - I}
            \end{equation*}
            exists as a real-valued matrix  $Q_n=(q^n(\xi,\eta))_{\xi,\eta\in \En}$. Furthermore, $\lp{Q_n}_{n \geq 2}$ is a consistent family, meaning that  $Q_n=Q_m\circ \varpi_{n,m}$ for any $2\leq m < n$,
            where $\varpi_{n,m}: \En \to \mathcal{E}_m$ is the projection obtained by restricting the partition to $[m]$.
        \end{assumption}

        To describe the annealed limit, we also describe an $n$-$\Xi$-coalescent (also called $n$-ksi-coalescent governed by $\Xi$ in this paper) following \cite[P.4]{birkner2018coalescent}.
        \begin{definition}\label{D: n-xi}
            Let $\Xi$ be a finite measure on the infinite ordered simplex 
            $$\Delta:=\left\{x=(x_1,x_2,\ldots) \in [0,1]^\infty: x_1 \geq x_2 \geq \ldots, \sum_{i=1}^{\infty} x_i \leq 1 \right\}.$$ 
             An $n$-$\Xi$-coalescent  is a continuous-time Markov process $\chi^n = \lp{\chi^n(t)}_{t \in \R_+}$ taking values in $\En$, whose transition rates are invariant under permutation and are described as follows: For any $\xi$ in $\En$  consisting of $b$ blocks, suppose that $\eta$ is an element of $\mathcal{E}_n$ obtained from $\xi$ by keeping $s$ of the blocks of $\xi$ the same, and coalescing the remaining $b-s$ blocks into $r$ blocks of sizes $k_1, k_2, \ldots, k_r$ respectively. The transition rate from $\xi$ to $\eta$ is given by
            \begin{equation*}
            \lambda_{b;k_1,\ldots, k_r;s}:=
                \mathbbm{1}_{(r,s) = (1,b-2)} \Xi(\mathbf{0})
                + 
                \int_{\Delta \setminus\{\mathbf{0}\}} \sum_{l=0}^s \sum_{i_1,\ldots, i_{r+l} \text{ distinct}}^{\infty} \binom{s}{l}\prod_{m = 1}^{r}x_{i_m}^{k_m} \,\cdot \prod_{w=r+1}^{r+l}x_{i_w} \cdot\, (1-\la{x})^{s-l}\frac{\Xi(dx)}{\left\langle x, x\right\rangle},
            \end{equation*}
            where $\left\langle x,x \right\rangle = \sum_i x_i^2$ and $\la{x} = \sum_i x_i$.
        \end{definition}
        Informally, an $n$-$\Xi$-coalescent can be described in terms of interval partitions. Take $x\in\Delta \setminus \{\mathbf{0}\}$ at rate $\frac{1}{\left\langle x, x \right\rangle} \Xi$ and perform an \textit{$x$-merger}. Additionally, each pair of blocks coalesces independently at rate $\Xi(\mathbf{0})$. In particular, any given fixed pair of lineages coalesce with rate
$$\lambda_{2;1;0} =\Xi(\mathbf{0})+ \Xi(\Delta \setminus\{\mathbf{0}\})=\Xi(\Delta).$$
Writing $x=(x_1,x_2,\ldots)$, an  $x$-merger can be roughly described as follows: as $\sum_i x_i \leq 1$, $x$ decides an interval partition. We throw each block of the coalescent uniformly at random on the unit interval. If they belong to the same element of the partition, and if they are at most $\sum_i x_i$, then they coalesce.

        \begin{remark}\rm\label{R: Qholds}\rm  
We shall show in Lemma \ref{L: ordered_offspring_distribution} that, under Assumption~\ref{A:c_N} and a usual condition on the ordered offspring distribution, we have that Assumption \ref{A:Q_Nn} holds and that for each $n\geq 2$,
 $Q_n$ is the generator of an $n$-$\Xi$-coalescent; i.e.
$\lambda_{b;k_1,\ldots,k_r;s} = q^n(\xi, \eta)$
            whenever $\xi, \eta\in \mathcal{E}_n$ and  $\lambda_{b;k_1, \ldots, k_r;s}$ are as in Definition \ref{D: n-xi}. 
    \end{remark}

        \begin{remark}\rm\label{R: xi_q_connection}\rm  
Assumption~\ref{A:Q_Nn}  implies that there is a unique finite measure $\Xi$ on $\Delta$ for which $Q_n$ is the infinitesimal generator of an $n$-$\Xi$-coalescent governed by $\Xi$ for all $n\geq 2$. This is the content of \cite[Proposition 3]{berestycki04}, which shows that the generator $C$ of the coalescence part of an exchangeable fragmentation-coalescence (EFC) process is completely determined by the consistent family $\lp{Q_n}$ of its finite-dimensional restrictions, each $Q_n$ encoding the jump rates on $\En$. Moreover, \cite[Proposition 3]{berestycki04} proves that any such consistent and exchangeable collection of generators $\lp{Q_n}_{n\geq 2}$ arises as the finite-dimensional projections of an EFC process taking values in $\Enfty$. In the purely coalescent case (i.e. no fragmentation), this implies that any consistent family of $n$-$\Xi$-coalescents can be realized as the projections of an infinite $\Enfty$-valued Markov process with infinitesimal generator $Q$ determined by $\Xi$.
    \end{remark}

        We can now describe the convergence of the unconditional law of the time-rescaled ancestral process $\lp{\bar{\chi}^{N,n}(t)}_{t \in \R_+}$ defined by \eqref{E: bar_definition}, 
        under the model presented in Section~\ref{S:GeneralModel}. 
        \begin{thm}[Annealed convergence]\label{T: annealed_limit}
            Suppose that Assumptions~\ref{A:c_N},  \ref{A: timescale} and \ref{A:Q_Nn} hold. Then there is a unique finite measure $\Xi$ on $\Delta$, made precise in Remark~\ref{R: xi_q_connection}, such that $\bar{\chi}^{N,n}$, as a $\mathcal{D}\lp{\R_+, \En}$-valued random variable, converges in distribution to an $n$-$\Xi$-coalescent  governed by $\Xi$ as $N\to\infty$.
        \end{thm}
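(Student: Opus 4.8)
The plan is to establish convergence of the time-rescaled process $\bar{\chi}^{N,n}$ to the $n$-$\Xi$-coalescent via the standard generator/semigroup route for Markov chains converging to Feller processes (as in \cite{ethier2009markov}), but with two twists dictated by the diploid structure and the random time-change $S$. The key observation is that although $\bar{\chi}^{N,n}$ itself is not Markov (the time-change $S$ peeks at the future within a block of time-steps), the embedded process obtained by watching $\chi^{N,n}$ only at the successive times it lies in $\En$ \emph{is} a Markov chain on $\En$, with one-step transition matrix closely related to $P_{N,n}$. Concretely, I would first show that, conditioned on $\chi^{N,n}(0)=\xi\in\En$, the probability that the chain leaves $\En$ and takes more than one time-step to return is $o(c_N)$ uniformly in $\xi$; this uses Assumption~\ref{A:c_N} ($c_N\to 0$), Assumption~\ref{A: timescale} ($d_N\asymp \lambda c_N$, so the dispersal timescale is $\Theta(c_N^{-1})$, meaning that once two lineages sit in the same individual they disperse again within $O(1)$ steps with overwhelming probability on the $c_N^{-1}$ scale), and the exchangeability/polynomial structure behind \eqref{E:cN_general}. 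Granting this, the embedded chain on $\En$ has transition matrix $P_{N,n} + o(c_N)$ entrywise, so by Assumption~\ref{A:Q_Nn} its generator normalized by $c_N^{-1}$ converges to $Q_n$.

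Next I would upgrade this to process-level convergence. Since $\En$ is finite and $Q_n$ is a bona fide conservative generator (by Remark~\ref{R: xi_q_connection}, it is the generator of an $n$-$\Xi$-coalescent, hence of a Feller process on the finite space $\En$), the convergence $c_N^{-1}(P_{N,n}-I)\to Q_n$ together with the bound on excursions outside $\En$ gives, via \cite[Chapter~1, Theorem~6.5 and Chapter~4]{ethier2009markov}, that the linearly-interpolated/embedded chain run at speed $c_N^{-1}$ converges in $\mathcal{D}(\R_+,\En)$ with the $J_1$ topology to the $n$-$\Xi$-coalescent. The role of the time-change $S$ in \eqref{E:TimeChange} is precisely to make this clean: it collapses each excursion of $\chi^{N,n}$ out of $\En$ to the instant it began, so that $\widetilde{\chi}^{N,n}$ only ever sits in $\En$, and $\bar{\chi}^{N,n}(t)=\widetilde{\chi}^{N,n}(\lfloor t c_N^{-1}\rfloor)$ is exactly the speed-$c_N^{-1}$ embedding of the $\En$-valued Markov chain up to a negligible number of extra steps consumed by short excursions. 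A short lemma quantifying that the total number of time-steps spent outside $\En$ up to real time $t$ is $o_{\PP}(c_N^{-1})$ — again from $d_N c_N^{-1}\to\lambda<\infty$ and the fact that pairs in a common individual disperse at rate $\Theta(c_N^{-1})$ relative to the coalescence clock — ensures the two time-clocks agree in the limit, so no spurious time-rescaling is introduced.

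Finally, to identify the limit I would invoke Remark~\ref{R: xi_q_connection}: Assumption~\ref{A:Q_Nn} gives a consistent family $(Q_n)_{n\ge2}$, and by \cite[Proposition~3]{berestycki04} there is a unique finite measure $\Xi$ on $\Delta$ such that each $Q_n$ is the generator of the $n$-$\Xi$-coalescent associated with $\Xi$; consistency of $(Q_n)$ matches the consistency (sampling-projection) property of the $n$-$\Xi$-coalescents, and by uniqueness of the martingale problem on the finite space $\En$ the limiting process is exactly that coalescent. Assembling these pieces — the excursion estimate, the generator convergence, the $J_1$-tightness coming for free from a finite state space plus no jump accumulation, and the Berestycki characterization — yields the claimed weak convergence. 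I expect the main obstacle to be the excursion estimate: showing rigorously and \emph{uniformly in $\xi\in\En$} that an excursion of $\chi^{N,n}$ out of $\En$ returns in a single time-step with probability $1-o(c_N)$, and that no two such excursions overlap on the relevant timescale, which requires carefully bounding the probability that a lineage pair newly placed in a common individual either coalesces or attracts a third lineage before dispersing — precisely the kind of second-order polynomial-in-$\tilde V_i$ estimate underlying \eqref{E:cN_general}, but now needed as a uniform $o(c_N)$ bound rather than a leading-order asymptotic.
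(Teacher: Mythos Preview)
Your direct generator/embedded-chain route is different from the paper's proof, and it does contain a genuine error. The paper does not prove the annealed limit directly at all: it first establishes the \emph{quenched} convergence (Theorem~\ref{T: quenched_limited_outcrossing}), then observes that the annealed law is the intensity measure $\mathbb{E}[\mathcal{L}^{N,n}]$ of the conditional law, shows that the intensity-measure map is continuous under weak convergence in distribution of random measures, and finally checks that $\mathbb{E}[\mathcal{L}^n_\Pi]$ is the law of an $n$-$\Xi$-coalescent by annealing the walks on the EFC. So the paper's annealed theorem is a corollary of the hard quenched theorem, not a standalone Ethier--Kurtz argument.

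Your approach is in principle valid and more self-contained, but the physics of the excursion is inverted. Under Assumption~\ref{A: timescale} one has $\alpha_N\to 1$, so once two lineages sit in the same individual the overwhelmingly likely outcome is \emph{coalescence via repeated selfing}, not dispersal: the non-coalescence probability before the first outcrossing is $\sum_{r\ge 0}\alpha_N^r(1-\alpha_N)2^{-r}=(1-\alpha_N)/(1-\alpha_N/2)\to 0$ (this is exactly the computation in the paper's Lemma~\ref{L: G_and_bar_same_limits}). This matters for your argument, because it is precisely this ``coalesce, not disperse'' fact that makes the embedded $\En$-chain agree with $P_{N,n}$ to leading order: $P_{N,n}$ is defined through the haploid map $F_{\rm hap}$, which \emph{merges} any pair placed in a common individual, and that is the right proxy for the embedded chain only if the excursion typically resolves by merger. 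If dispersal were the typical resolution, the embedded chain would return to $\xi$, not to $F_{\rm hap}$ of the one-step state, and your comparison to $P_{N,n}$ would fail. Relatedly, the claim that the chain ``takes more than one time-step to return with probability $o(c_N)$'' is false in general: for Wright--Fisher the excursion continues with probability $1/2$ each step, and for Moran it lasts order $N$ steps; the correct (and sufficient) statements are that the excursion resolves by coalescence with probability $1-o(1)$, and that its length is $o_\PP(c_N^{-1})$, so the time-change $S$ induces no macroscopic time distortion. With those two corrections your Ethier--Kurtz scheme goes through on the finite space~$\En$.
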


    The proof of Theorem~\ref{T: annealed_limit} is postponed to Section~\ref{S: annealed_proof}, as it will follow from results in Section~\ref{S: main_result}. 
        
\begin{remark}\rm \label{Rk:Timescale1}
            Theorem \ref{T: annealed_limit} complements  \cite[Theorem 1.1]{birkner2018coalescent} by providing the high-selfing analogue. 
Note that in \cite{birkner2018coalescent, abfw25} where selfing is excluded, the $\Xi$ arising in the annealed limit is such that $\lambda_{2;1;0}=1$. In this paper, however, due to high selfing rate ($\alpha_N\to1$), the measure $\Xi$ in Theorem~\ref{T: annealed_limit} satisfies $\lambda_{2;1;0}=2$, i.e.
        \begin{equation}\label{E:TotalTimescale}
        2 =\Xi(\mathbf{0})+ \Xi(\Delta \setminus\{\mathbf{0}\}) = \Xi(\Delta).
        \end{equation}
This normalization was also adopted in \cite{NFW25}.
\end{remark}

\begin{remark}\rm \label{Rk:Timescale2}
That Theorem~\ref{T: annealed_limit} is not extended to the case $d_N c_N^{-1} \to \infty$ is not due to any particular difficulty with the annealed proof. Rather, the tools for the quenched proof for the the case $d_N c_N^{-1} \to \infty$ are sufficiently different from those required for the present work. Hence we leave this case to future work. 
For the simple case of $n=2$ samples from two different individuals, if $\alpha_N\to \alpha \in [0,1]$, one can check as in \cite[Theorem 3.1]{NFW25}  that the coalescence time $\tau^{N,2}$ satisfies that $\tau^{N,2}\,c_N$ converges in distribution to an exponential random variable with rate $\frac{2}{2-\alpha}$ (i.e. $\lambda_{2;1;0}=\frac{2}{2-\alpha}$). This is consistent with \eqref{E:TotalTimescale} when $\alpha=1$.
        \end{remark}

\section{Quenched convergence of the ancestral process}\label{S: main_result}

         Recall the pedigree $\mathcal{G}_N$ from Definition~\ref{D: pedigree} and define the $\sigma$-algebra $\mathcal{A}_N$  generated by the pedigree $\mathcal{G}_N$ and the labels of the $n$ distinct individuals from whom we have sampled, i.e.
        \begin{equation*}
            \mathcal{A}_N := \sigma(\mathcal{G}_N, \hat{X}_i^N(0): 1 \leq i \leq n).
        \end{equation*}
        We denote the quenched law of the time-rescaled ancestral process \eqref{E: bar_definition} by 
        \begin{equation}\label{Def:LawCondAn}
           \mathcal{L}^{N,n} := \PP\lp{\bar{\chi}^{N,n} \in \cdot \mid \mathcal{A}_N}.
        \end{equation}
        Our main result in this paper, Theorem \ref{T: quenched_limited_outcrossing}, asserts that this $\mathcal{M}_1\lp{\mathcal{D}\lp{\R_+, \En}}$-valued random variable converges  as $N\to\infty$. 
        Furthermore, the limit is the \textit{random} law of a family of coalescing random walk on a random graph described by an exchangeable fragmentation-coalescence processes (EFCs). The latter was introduced in \cite{berestycki04} which we describe in the next section, before giving the rigorous statement of our main result in Theorem \ref{T: quenched_limited_outcrossing}.
        
        Briefly, let $\lambda$ and $Q$ be given by Assumption~\ref{A: timescale} and Remark \ref{R: xi_q_connection} respectively. Then
        $\mathcal{L}^{N,n}$ converges to a \textit{random} element $\mathcal{L}_{Q,\lambda}^n\in\mathcal{M}_1\lp{\mathcal{D}\lp{\R_+, \En}}$ described as follows: 
        Consider a random graph $G_{Q,\lambda}^n=\big(G_{Q,\lambda}^n(t)\big)_{t\in\R_+}$, to be called a $Q$-$\lambda$ graph  starting with $n$ nodes at $t=0$,
        in which each node in the graph splits into two nodes independently at rate $\lambda$, and where any subset of nodes coagulate into one node according to the rate matrix $Q$ (to be made precise in Definition \ref{D: Q_lambda_graph}); see Figure \ref{F: ancestral_graph_walks} for an illustration.  
        Given $G_{Q,\lambda}^n$, we consider a system of coalescing random walks on it, where each random walker chooses one of the two directions to go when it reaches a point of fragmentation.
        Then $ \mathcal{L}_{Q,\lambda}^n$ is the conditional law of this system of coalescing random walks given $G_{Q,\lambda}^n$; we make this precise in Definition \ref{D: coalescent_on_Q_lambda}.

    \subsection{Random walks on exchangeable fragmentation-coalescence processes}\label{SS: EFCs}

The (random) $Q$-$\lambda$ graph $G_{Q,\lambda}^n$ mentioned above is a subgraph of
    an exchangeable fragmentation-coalescence process (EFC), where the EFC was introduced in
\cite{berestycki04}. Here we give a concise description of EFCs. 

        \begin{definition}[\cite{berestycki04}]
            An exchangeable fragmentation-coalescence (EFC) process $\Pi=(\Pi(t))_{t\in\R_+}$ is a $\Enfty$-valued exchangeable Markov process so that the restriction to $\En$, denoted by $\Pi^n$, is a c\'{a}dl\'{a}g finite state Markov chain which can only evolve by fragmentation of one block or by coalescence.
        \end{definition}

        By \cite[p. 781]{berestycki04} $\Pi$ is fully characterized by the initial state $\Pi(0)$, finite measures $\nu_{Disl}$ and $\nu_{Coag}$ on $\Delta$ where $\nu_{Coag}(\mathbf{0}) = 0$, and non-negative constants $c_k$ and $c_e$. In this paper we will always consider the case $\nu_{Disl} = 0$. The constant $c_e$ is  the rate at which each particle in the EFC fragments into two particles, while 
        $c_k$ and $\nu_{Coag}$ form the coalescing part of the EFC. Specifically, 
        for any finite subset $l$ of particles in the EFC, they coalesce as if they were an $l$-$\Xi$-coalescent where
        \begin{equation}\label{E:XiEFC}
        \Xi = c_k\delta_{\mathbf{0}} + \nu_{Coag}.            
        \end{equation}
        That is, the non-Kingman coalescences are governed by $\nu_{Coag}$ and $c_k$ is the rate at which any pair of particles coalesce independently.  

        \begin{example}[Ancestral Graphs]\label{X: ARG}\rm
        Suppose $\Pi = \lp{\Pi(t)}_{t \in \R_+}$ is an EFC with characteristics $c_k = 2$, $\nu_{Coag} = 0$, $c_e = \lambda,$ and $\nu_{Disl} = 0$. Then for any $n$ particles, each of the $\binom{n}{2}$ pairs of particles coalesce independently with rate $2$ and each of the $n$ particles fragment independently at rate $\lambda$, which is precisely the structure of an ancestral recombination graph or an ancestral selection graph \cite{Griffiths1991, GriffithsAndMarjoram1997, kroneneuhauser1997}, aside from the factor-of-$2$ difference in time scale noted in Remark~\ref{Rk:Timescale1}. This subgraph of $\Pi$ was conjectured in \cite[Section 4.1]{NFW25} as the scaling limit of a diploid Moran model with limited outcrossing. This is established in Section~\ref{SS: ARG_robustness} in this paper.  
        \end{example}

        \begin{example}\label{X: psi_model}
            Suppose that $\psi\in[0,1]$ and $\rho\in (0,\infty)$ are fixed constants. By a $(\psi, \lambda, \rho)$-EFC, denoted by $\Pi(\psi, \lambda, \rho)$, we mean an EFC where $c_k = 2\frac{1}{1+\rho \frac{\psi^2}{2}}$, $c_e = \frac{\lambda}{1+\rho \frac{\psi^2}{2}}$, and 
            \begin{equation*}
                \nu_{Coag}(dx) = 2\frac{\rho\frac{\psi^2}{2}}{1+\rho \frac{\psi^2}{2}}\delta_{\lp{\frac{\psi}{2}, \frac{\psi}{2}, 0, 0, \ldots}}(dx).
            \end{equation*}
            This EFC arises from the model in \cite{birkner2013} when $\psi\in [0,1]$ is a fixed constant, as we proved in Corollary \ref{Cor:BBE13}. 
        \end{example}

        The example~\ref{X: psi_model} generalizes when we allow $\psi$ to be random quantity. A natural prior is to take $\psi$ to be Beta-distributed. This yields an EFC whose coalescent part is a beta coalescent.

        \begin{example}\label{X: beta_coalescent_model}\rm
        Consider the EFC with 
            characteristics $c_k = 2\frac{1}{1+\frac{1}{12}\frac{\Gamma(4-r)}{\Gamma(2-r)}}$,  $c_e = \lambda \frac{1}{1+\frac{1}{12}\frac{\Gamma(4-r)}{\Gamma(2-r)}}$, $\nu_{Disl} = 0$ and
            \begin{equation*}
               \nu_{Coag}= 2\frac{\frac{\rho}{12}\frac{\Gamma(4-r)}{\Gamma(2-r)}}{1+ \frac{\rho}{12}\frac{\Gamma(4-r)}{\Gamma(2-r)}}\rho \int_{0}^1\frac{z^2}{2}\delta_{\lp{\frac{z}{2}, \frac{z}{2}, 0, 0, \ldots}}  \,{\rm Beta}(2-r,r)(d z),,
            \end{equation*}
where the ${\rm Beta}(2-r,r)(d z)$ measure has density
\[
\frac{1}{\Gamma(2-r)\Gamma(r)}z^{1-r}(1-z)^{r-1}, \quad z\in (0,1).
\]
This EFC arise when  $\psi$ in Example \ref{X: psi_model} is random and  Beta$(2-r, r)$ distributed for some $r \in [0,1]$. 
            
        \end{example}
        \begin{remark}\rm
            For $\Pi$ the EFC of Example~\ref{X: psi_model}, with an appropriate time-rescaling, the subgraph given by tracing the possible trajectories of coalescing random walks starting at the first $n$ blocks of $\Pi$ backwards in time is equal in distribution to the ancestral recombination graph of \cite{birkner2013}. In particular, the gene genealogies given by coalescing random walks on $\Pi$ are the same as for the gene genealogy of a single locus in \cite{birkner2013}.
        \end{remark}
        
        We define now a family of coalescing random walks $\lp{x_i}_{i = 1}^n$ on an EFC $\Pi$ with $\Pi(0) = \xi_0$. Two realizations of these coalescing random walks on a given $\Pi$ are demonstrated in Figure~\ref{F: ancestral_graph_walks}. We follow the Poissonian construction of \cite{berestycki04}. Let $P_C = \lp{\lp{t, \xi^{(C)}(t)}}_{t \geq 0}$ and $P_F = \lp{(t, \xi^{(F)}(t), k(t))}_{t \geq 0}$ be two independent Poisson point processes (PPPs) on the same filtration. The atoms of $P_C$ are points in $\R_+ \times \Enfty$ and $P_C$ has intensity measure $dt \otimes C$. The atoms of $P_F$ are points in $\R_+ \times \Enfty \times \N$ and $P_F$ has intensity measure $dt \otimes F \otimes \#$, where $\#$ is the counting measure on $\N$. $\Pi$ may be constructed in the obvious way from $P_C$ and $P_F$, as described in \cite[Section 3.2]{berestycki04}. We addend to this description a construction of coalescing random walks $\lp{x_i}$ on top of $\Pi$. The $\lp{x_i}$ are a family of $n$ coalescing particles $x_i$, modeled as $\N$-valued random variables, tracking the label of the block in $\Pi$ to which the $i$th particle belongs. We restrict ourselves to the case where the fragmentation $F = \lambda \mathbf{e}$, which is the case relevant to our results. In this case, the fragmentation of a block fragments it into precisely two parts.

        \begin{figure}
            \centering
            \includegraphics[width=0.5\linewidth]{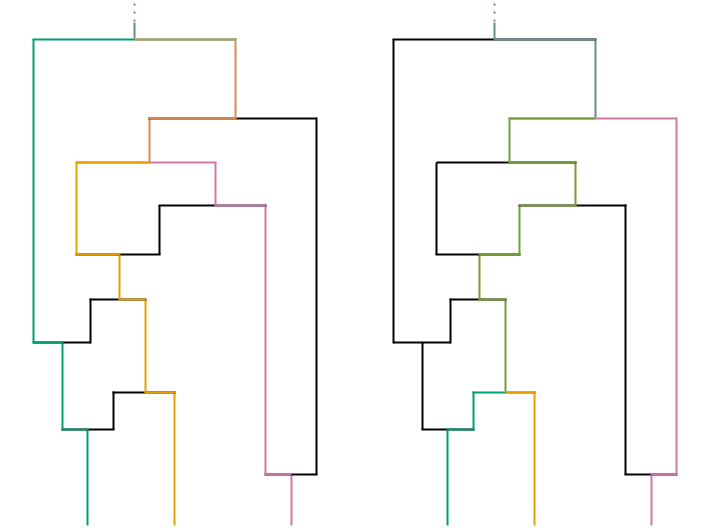}
            \caption{
            Two realizations of coalescing random walks on a single, fixed realization of an EFC $\Pi$ with $c_k = 1$. These random walks follow each coalescence in the EFC and choose between each of the possible edges ahead of them at a fragmentation with equal probability.}
            \label{F: ancestral_graph_walks}
        \end{figure}

        Let $m(t) := \max_i x_i(t)$ We begin with $x_i(0) = i$ for each $1 \leq i \leq n$ and construct $x_i$ as follows:
        \begin{itemize}
            \item if $t$ is not an atom time for either $P_C$ or $P_F$ then $x_i(t-) = x_i(t)$,
            \item if $t$ is an atom time for $P_C$ such that $\xi^{(C)}(t)_{\mid m(t-)} \neq \xi_0^{m(t-)}$ then $x_i(t)$ is equal to the label of the block to which the  $x_i(t-)$th block of $\Pi^{m(t-)}$ is mapped,
            \item if $t$ is an atom time for $P_F$ such that $k(t) = x_i(t-)$ then the label in $\Pi(t)$ of one of the two blocks into which the $x_i(t-)$th block fragments is chosen fairly (i.e. probability $\frac{1}{2}$ each) and $x_i(t)$ is set equal to this label for all $i$ with $x_i(t-)=k(t)$.
        \end{itemize}

        \begin{definition}\label{D: EFC_walks_coalescent}
            The coalescent process $\chi^n$ defined by
            \begin{equation*}
                i\sim_{\chi^n(t)} j \text{ if } x_i(t) = x_j(t)
            \end{equation*}
            is a $\En$-valued process. We denote by $\mathcal{L}_\Pi^n\in \mathcal{M}_1\lp{\mathcal{D}\lp{\R_+, \En}}$ the conditional law of $\chi^n$ given $\Pi$.
        \end{definition}

    We provide, in the main result of this paper contained in Section~\ref{SS: main_result}, criteria by which $\mathcal{L}_\Pi^n$ is the quenched scaling limit of the law of the coalescent conditional on the pedigree.

    \subsection{Main result}\label{SS: main_result}

        We are now ready to state our main result. 
        Recall that $\mathcal{L}^{N,n} := \PP\lp{\bar{\chi}^{N,n} \in \cdot \mid \mathcal{A}_N}\in \mathcal{M}_1\lp{\mathcal{D}\lp{\R_+, \En}}$ was defined in \eqref{Def:LawCondAn}, and $\mathcal{L}_\Pi^n$ was defined in Definition \ref{D: EFC_walks_coalescent}. 
        Throughout this paper, we equip the space 
        $\mathcal{M}_1\lp{\mathcal{D}\lp{\R_+, \En}}$ with the weak topology.
Under this topology, convergence in distribution (denoted by $\toL$ in this paper) of a sequence  $(\mu_N)_{N \in \N}$ of \textit{random }elements in $\mathcal{M}_1\lp{\mathcal{S}}$  is also called ``weak convergence in distribution" in \cite[Chapter 4]{Kallenberg2017}.

        \begin{thm}[Quenched convergence]\label{T: quenched_limited_outcrossing}
            Suppose Assumptions~\ref{A:c_N}, \ref{A:Q_Nn} and \ref{A: timescale} hold. Then there is a  positive finite measure $\Xi$ on $\Delta$ (as explained in Remark~\ref{R: xi_q_connection}) such that, for $\Pi$ an EFC with characteristics $c_k = \Xi(\mathbf{0})$, $\nu_{Coag} = \nu_{\Xi - c_k\delta_{\mathbf{0}}}$, $c_e = \lambda$, and $\nu_{Disl} = 0$, it holds  that    $\mathcal{L}^{N,n}$
            converges weakly in distribution to $\mathcal{L}_\Pi^n$ in $\mathcal{M}_1\lp{\mathcal{D}\lp{\R_+, \En}}$ as $N\to\infty$. That is,
        \[
        \mathcal{L}^{N,n} \toL \mathcal{L}_\Pi^n \quad \text{in} \quad \mathcal{M}_1\lp{\mathcal{D}\lp{\R_+, \En}}.
        \]
        \end{thm}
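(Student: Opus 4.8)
The plan is to prove quenched convergence by exhibiting a coupling of the finite-$N$ ancestral process with a limiting system of coalescing random walks on a $Q$-$\lambda$ graph, and then leveraging the abstract weak-convergence criterion for random measures on $\mathcal{D}(\R_+,\En)$ announced in Section~\ref{S: weak_convergence_criteria}. The starting observation is that, conditionally on the pedigree $\mathcal{G}_N$, the $n$ ancestral lineages $(X_i^N)$ are conditionally independent random walks whose only coupling is through coalescence (``same vertex $\Rightarrow$ same vertex forever''). The key structural decomposition is the one already encoded in the state space $\Sn$ and the time-change $S(\cdot)$: within a single individual carrying two lineages, those lineages either coalesce (Mendelian probability $1/2$ per newborn step, or one of the triple/higher-order events governed by $Q_n$) or disperse (probability $d_N$ per step into the carried-over-or-outcrossed configuration). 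Under Assumption~\ref{A: timescale} the dispersal timescale $d_N^{-1}$ is comparable to the coalescence timescale $c_N^{-1}$, so on the macroscopic time scale $t = \lfloor k c_N^{-1}\rfloor$ the events ``two lineages enter the same individual'' (coalescence-type, rate governed by $Q_n$ via Remark~\ref{R: xi_q_connection}) and ``lineages split off / an individual's two lineages disperse'' (fragmentation, rate $\lambda$ per lineage-in-common-individual) become a Poissonian limit. This is exactly the EFC $\Pi$ with $c_k = \Xi(\mathbf 0)$, $\nu_{Coag} = \nu_{\Xi - c_k\delta_{\mathbf 0}}$, $c_e = \lambda$: the coalescence part is pinned by Assumption~\ref{A:Q_Nn} and Berestycki's Proposition~3 (as in Remark~\ref{R: xi_q_connection}), while the fragmentation rate $\lambda = \lim d_N c_N^{-1}$ is read off \eqref{E: d_N_Def}.

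The concrete steps, in order. First I would set up the quenched framework: show that the conditional law $\mathcal{L}^{N,n} = \PP(\bar\chi^{N,n}\in\cdot\mid\mathcal{A}_N)$ is a genuine $\mathcal{M}_1(\mathcal{D}(\R_+,\En))$-valued random variable (measurability), and reduce weak convergence in distribution of these random measures, via the criterion of Section~\ref{S: weak_convergence_criteria}, to convergence of quenched finite-dimensional distributions \emph{jointly with} a tightness/no-explosion statement — the latter being precisely why the time-change $S(\cdot)$ was introduced, to keep the process in $\En$ and prevent accumulation of jumps. Second, I would identify the limiting object: construct the $Q$-$\lambda$ graph $G^n_{Q,\lambda}$ as the first-$n$-blocks subgraph of the EFC $\Pi$ from Section~\ref{SS: EFCs}, and the coalescing walks $(x_i)$ on top of it per Definition~\ref{D: EFC_walks_coalescent}, so that $\mathcal{L}^n_\Pi$ is well-defined. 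Third, the combinatorial heart: analyze one macroscopic time step by a multi-step expansion of $\mathfrak{p}^{N,n}_{\xi\eta}$ on $\Sn$, separating the contributions of (a) newborn-coalescence (giving $Q_n$ in the limit via Assumption~\ref{A:Q_Nn} and the haploid map $F_{\rm hap}$), (b) dispersal of co-resident lineages (giving fragmentation rate $\lambda$), and (c) negligible higher-order events (controlled by $c_N\to 0$, Assumption~\ref{A:c_N}, and the moment bound $c_N \le C\mathbb{E}[K+K^2]/N^2$). This shows the generator of the $\Sn$-valued process, suitably time-rescaled and restricted, converges to the generator of the walks-on-EFC system. Fourth, upgrade generator convergence conditionally on the pedigree: here I would invoke the general quenched-Markov-convergence theorem promised in Section~\ref{S: main_result_proof}, together with a notion of convergence of the random pedigree $\mathcal{G}_N$ and continuity of $\Pi\mapsto\mathcal{L}^n_\Pi$, to pass from annealed to quenched — i.e. to show $\mathcal{L}^{N,n}\Rightarrow\mathcal{L}^n_\Pi$ in distribution rather than merely $\mathbb{E}\mathcal{L}^{N,n}\to\mathbb{E}\mathcal{L}^n_\Pi$.

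The main obstacle, I expect, is step four: the passage from convergence of \emph{averaged} (annealed) finite-dimensional distributions to convergence \emph{in distribution of the random conditional laws}. Annealed convergence only controls $\mathbb{E}[\mathcal{L}^{N,n}]$; to get that $\mathcal{L}^{N,n}$ itself converges weakly as a random measure one must show that the randomness of the pedigree does not wash out — equivalently, that a \emph{pair} of lineage-systems run on the \emph{same} pedigree but with independent Mendelian coins has the correct joint limit (two conditionally-independent copies of walks on the \emph{same} EFC $\Pi$). Technically this means controlling second-moment / two-sample quantities: the probability that lineages from two independent samples meet the same ``fragmentation site'' or ``coalescence event'' of the pedigree, which is where the genuinely quenched phenomenon (pedigree-dependence of the SFS) lives and where the diploid, selfing-driven structure differs essentially from the haploid Kingman case of \cite{TyukinThesis2015}. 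Handling this requires the two-sample combinatorial estimates together with the characterization of weak convergence of random measures from Section~\ref{S: weak_convergence_criteria}; once those are in place, consistency of $(Q_n)_{n\ge2}$ (Assumption~\ref{A:Q_Nn}) and Berestycki's realization of a consistent family as an EFC identify the limit uniquely, and uniqueness of $\Xi$ follows as in Remark~\ref{R: xi_q_connection}. The annealed Theorem~\ref{T: annealed_limit} then drops out by taking expectations, and the strengthening of \cite[Theorem 3.8]{abfw25} follows by specializing to $K_N=N$, $\alpha_N=0$.
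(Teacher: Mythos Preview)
Your proposal assembles the right ingredients but misidentifies the architecture of the proof, and in particular the ``main obstacle'' you flag is precisely what the paper's approach is designed to \emph{avoid}. You anticipate that passing from annealed to quenched requires a two-sample/second-moment argument (two independent Mendelian copies on the same pedigree), as in \cite{TyukinThesis2015,DFBW24,abfw25}. The paper does \emph{not} do this. Instead, the key structural move is to extract from the pedigree a reduced $\mathcal{A}_N$-measurable object, the discrete-time ancestral graph $G^{N,n}$, valued in $\Theta = \Z_+\times\Z_+\times\mathcal{E}$, which tracks only the number of possible ancestral individuals, the index of the last fragmentation, and the structure of the last coalescence (Section~\ref{SS: discrete_time_graph}). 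One then shows: (i) $\bar G^{N,n}$ converges weakly in $\mathcal{D}(\R_+,\Theta)$ to a $Q$-$\lambda$ graph (Lemma~\ref{L: graph_convergence}, via generator convergence and a compact-containment estimate for the graph size); (ii) the map $\Phi: g \mapsto \PP(\Pi^n(g)\in\cdot\mid g)$ from $\mathcal{D}(\R_+,\Theta)$ to $\mathcal{M}_1(\mathcal{D}(\R_+,\En))$ is \emph{continuous} (Lemma~\ref{L: phi_continuity}); (iii) the continuous mapping theorem then gives $\mathcal{L}^{N,n}_G = \Phi(\bar G^{N,n}) \towd \Phi(G^n_{Q,\lambda}) = \mathcal{L}^n_{Q,\lambda}$ directly, with no second-moment work at all. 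Two further lemmas close the gap: Lemma~\ref{L: G_and_bar_same_limits} shows $\mathcal{L}^{N,n}$ and $\mathcal{L}^{N,n}_G$ have the same limit (this is where the time-change $S(\cdot)$ and $\alpha_N\to 1$ are used, to show co-resident lineages coalesce rather than disperse with probability $\to 1$), and Lemma~\ref{L: equivalence_EFC_Q_lambda} identifies $\mathcal{L}^n_{Q,\lambda}$ with $\mathcal{L}^n_\Pi$ via an explicit coupling.

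The gap in your proposal, then, is that you never isolate a finite-dimensional, pedigree-measurable summary whose convergence \emph{as a random variable} can be established by ordinary (not quenched) weak convergence methods. Your step three works at the level of the $\Sn$-valued ancestral process, whose transition kernel is itself random given $\mathcal{A}_N$; this forces you into the replica/second-moment machinery. The paper's reduction to $G^{N,n}$ pushes all the pedigree randomness into a single $\Theta$-valued path, so that the remaining conditional randomness (Mendelian coins at fragmentations) is handled by the deterministic continuity of $\Phi$. Your approach could in principle be made to work, but it would be substantially harder in the diploid high-selfing setting than the continuity route the paper takes.
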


        The proof of Theorem~\ref{T: quenched_limited_outcrossing} is contained in Section~\ref{S: main_result_proof}. The proof proceeds by analyzing the possible trajectories of sample lineages in the pedigree backwards in time. On this graph of trajectories the sample lineages approximately perform a family of coalescing simple random walks. We show that the trace of these trajectories converge to a subgraph of a suitable EFC, and by a continuity argument show that $\mathcal{L}^{N,n}$ converges weakly in law to the random law governing the coalescing random walks on that EFC.

\begin{remark}\rm
    Compared to Theorem \ref{T: annealed_limit} which says that the (unconditional) law of $\bar{\chi}^{N,n}$ converges to a \textit{deterministic} element  (the law of a $\Xi$-coalescent) in $\mathcal{M}_1\lp{\mathcal{D}\lp{\R_+, \En}}$, Theorem \ref{T: quenched_limited_outcrossing} says that the conditional law $\mathcal{L}^{N,n} := \PP\lp{\bar{\chi}^{N,n} \in \cdot \mid \mathcal{A}_N}$ converges to a \textit{random} element in $\mathcal{M}_1\lp{\mathcal{D}\lp{\R_+, \En}}$ and the randomness in the limit is entirely captured by the EFC $\Pi$ for all sample size $n\geq 2$.
\end{remark}

        
        \begin{remark}\rm\label{R: strength_of_weak_convergence}\rm
            Note that the form of weak convergence, made precise in Section~\ref{S: weak_convergence_criteria}, is stronger than that described by the Meyer-Zheng topology of \cite{MZ84}. In particular, it follows from Theorem~\ref{T: quenched_limited_outcrossing} that the random occupation measure in $\mathcal{M}_{\rm loc}\lp{\R_+ \times \En}$ defined by $dt \otimes \delta_{\bar{\chi}^{N,n}(t)}$ converges vaguely in distribution with respect to the measure $\PP\lp{\cdot \,\mid\, \mathcal{A}_N}$ to the random measure $dt \otimes \delta_{\chi^n(t)}$ governed by $\PP\lp{\cdot \,\mid\, \Pi}$.
        \end{remark}

        \begin{remark}[Relaxing assumption on sampling]\label{Rk:sameIndiv}\rm
In our main result, we assumed that the $n$  sampled gene copies are from $n$ \textit{distinct} individuals, one gene copy from each individual. This assumption can be relaxed and the corresponding result can be obtained by our method, and we  briefly describe the corresponding limiting object here. 
Suppose the $n$  sampled gene copies consist of $2m$ gene copies sampled in pairs within individuals and $n-2m$ gene copies sampled each from distinct individuals. Then,
since $\alpha_N\to 1$, the two gene copies within each of the $m$ pairs will instantaneously coalesce with probability 1. Both the annealed limit and the quenched limit  (as $N\to\infty$) will be the same as before, but starts with $n-m$ gene copies rather than $n$.
The annealed limit in this case (for  fixed selfing probability) was obtained in \cite{NordborgAndDonnelly1997}. For the proof, one may specify what it means to take supremum in an empty set in \ref{E:TimeChange} and show that the transient behavior of the discrete-time processes in $\Sn$ converge to that of the limiting process.
        \end{remark}

\section{Applications}\label{S: applications}

    In this section, we discuss some applications of our main result that motivate this study. 

    In Section~\ref{SS: ARG_robustness} we establish that the system of coalescing random walks on the ancestral graph with binary mergers (like the ancestral recombination graph) arises as the limiting conditional coalescent for a class of models including both the Wright-Fisher model and the Moran model, showing robustness of such system. This result extends that of \cite{NFW25} from sample of size 2 to general size $n\geq 2$.

    In Section~\ref{SS: tree_stats}, we prove that our  main theorem implies the convergence of important tree statistics in population genetics as $N\to\infty$. We consider the internal and external branch lengths of the coalescent tree associated with the site-frequency spectrum (SFS).  We also illustrate, through simulation of the robust model in Section~\ref{SS: ARG_robustness}, how the SFS depends on the pedigree when the selfing probability is close to one.

    We then provide applications of Theorem~\ref{T: quenched_limited_outcrossing} to our diploid Sargasyan-Wakeley model in Section~\ref{SS: SW_model_apps}, and examples of how one can establish scaling limits for models like those in \cite{birkner2013, DFBW24} given by mixed demographies in Section~\ref{SS: mixed_demographies}.

    \subsection{Robustness of coalescing walks on the ancestral graph with binary mergers}\label{SS: ARG_robustness}

        In this section we establish, as a corollary of Theorem~\ref{T: quenched_limited_outcrossing} that the scaling limit described in \cite[Section 4.1]{NFW25} is robust to perturbations of the pedigree structure. That is, so long as non-binary mergers are vanishingly unlikely on the coalescent time-scale, the scaling limit is given by coalescing random walks on EFCs of the form described in Example~\ref{X: ARG}.
        
        Let $\mathfrak{c}_3 := \PP_{\xi_0^3}\lp{\chi^{N,n}(1) = \mathbf{1}_3}$ be the probability that three distinct sample lineages in three distinct individuals coalesce in a single time-step. This is the simplest type of non-binary merger. Then
        \begin{align}\label{E:triplecoa}
        \mathfrak{c}_3
        = &
        \frac{1}{\binom{N}{3}}\,
        \mathbb{E}\bigg[ 
        \frac{N-K}{N}\sum_{i=1}^N\!\left(
        \frac{1}{4}\binom{V_{i,i}}{2}
        +\frac{1}{8}\,V_{i,i} \sum_{j\neq i} V_{i,j}
        +\frac{1}{16}\binom{\sum_{j\neq i} V_{i,j}}{2}
        \right) \notag\\
        +&
        \sum_{i=1}^N\!\left(
        \frac{1}{8}\binom{V_{i,i}}{3}
        +\frac{1}{16}\binom{V_{i,i}}{2}\,\sum_{j\neq i} V_{i,j}
        +\frac{1}{32}\,V_{i,i}\binom{\sum_{j\neq i} V_{i,j}}{2}
        +\frac{1}{64}\binom{\sum_{j\neq i} V_{i,j}}{3}
        \right)
        \bigg], 
        \end{align}
        where, in the first equality, the first term comes from two newborns and one carried-over (parent $i$, the common parent for all 3 lineages) in which both newborn transmissions must pick the carried-over lineage’s copy in $i$; and the second term comes from three newborns in which all three transmissions through the same parent $i$ must choose the same copy.        
        The proof of Equation~\eqref{E:triplecoa} is given in Lemma \ref{lem:triple-one-step} in the Appendix.

         \begin{assumption}[Negligible triple coalescent]\label{A:triple}
            We assume that $\mathfrak{c}_3=o(c_N)$, i.e. $\mathfrak{c}_3 c_N^{-1} \to 0$ as $N\to\infty$. 
        \end{assumption}
        
        \begin{thm}\label{C: ARG_robustness}
            Suppose that Assumptions~\ref{A:c_N}, \ref{A: timescale} and \ref{A:triple} hold. Then Assumption \ref{A:Q_Nn} holds and the sequence of random measures $\PP_{\xi_0^n} \lp{\bar{\chi}^{N,n} \in \cdot \,\mid\, \mathcal{A}_N}$ converge weakly in law to $\mathcal{L}_\Pi^n$ where $\Pi$ is the ancestral recombination graph described in Example~\ref{X: ARG}.
        \end{thm}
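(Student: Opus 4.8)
The plan is to deduce Theorem~\ref{C: ARG_robustness} from Theorem~\ref{T: quenched_limited_outcrossing}: it suffices to show that, under the hypotheses, Assumption~\ref{A:Q_Nn} holds with the limiting generators $Q_n$ being those of the Kingman $n$-coalescents run at pair rate $2$ --- equivalently, with associated measure $\Xi = 2\,\delta_{\mathbf{0}}$. Granting this, Theorem~\ref{T: quenched_limited_outcrossing} yields the convergence $\mathcal{L}^{N,n}\to\mathcal{L}_\Pi^n$ in distribution with $\Pi$ the EFC of characteristics $c_k = \Xi(\mathbf{0}) = 2$, $\nu_{Coag} = \nu_{\Xi - 2\delta_{\mathbf{0}}} = 0$, $c_e = \lambda$, $\nu_{Disl} = 0$, which is exactly the ancestral recombination graph of Example~\ref{X: ARG}. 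Only Assumptions~\ref{A:c_N} and~\ref{A:triple} are used to identify $Q_n$; Assumption~\ref{A: timescale} enters solely through the parameter $\lambda$ fed to Theorem~\ref{T: quenched_limited_outcrossing}.

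To identify $Q_n$, I would run the one-step bookkeeping that produces \eqref{E:cN_general} and \eqref{E:triplecoa}. Fix $\xi, \eta \in \En$ and recall that $p^N_{\xi\eta}$ is the one-step transition probability, starting from $\xi$, of the partition-valued chain in which any lineages that land in a common individual one step back are immediately declared coalesced (the haploid reduction $F_{\mathrm{hap}}$ of \eqref{E: hap_map}). Enumerating the assignments of the $|\xi|$ extant lineages to their ancestors (newborns or carry-overs) one step back, and applying Mendel's rule to each assignment, expresses $p^N_{\xi\eta}$ as a polynomial in $\{\tilde V_i\}_{i \in [N]}$ (together with $K$, $S$, $N-K$) with bounded Mendelian coefficients. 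Then: (i) if $\eta$ is obtained from $\xi$ by coalescing exactly one pair of its blocks, exchangeability reduces the leading-order contribution to the two-lineage case and $c_N^{-1}p^N_{\xi\eta} \to 2$ (see below); (ii) if $\eta$ is obtained by merging three or more blocks of $\xi$ into one, the event forces three prescribed lineages to share a common ancestral individual one step back, whose probability is dominated, up to a constant depending only on $n$, by the monomials already appearing in \eqref{E:triplecoa}, hence is $O(\mathfrak{c}_3) = o(c_N)$ by Assumption~\ref{A:triple}, so $c_N^{-1}p^N_{\xi\eta}\to 0$; (iii) any other $\eta$ requires at least two independent coincidences, so $p^N_{\xi\eta} = O(c_N^2) = o(c_N)$. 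Summing the off-diagonal limits gives $c_N^{-1}(p^N_{\xi\xi} - 1) \to -2\binom{|\xi|}{2}$. Therefore $Q_n := \lim_N c_N^{-1}(P_{N,n} - I)$ exists and equals the generator of the Kingman $n$-coalescent at pair rate $2$; the family $(Q_n)_{n\ge 2}$ is consistent since restricting a Kingman coalescent to a sub-sample gives a Kingman coalescent with the same pair rate. This is exactly Assumption~\ref{A:Q_Nn} with $\Xi = 2\delta_{\mathbf{0}}$.

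For case (i) I would argue that the pair rate is $2$, not $1$ (the latter being the contribution of the direct one-step coalescence probability $c_N$ alone): conditioning on the two lineages reaching a common individual $x$ one step back, at most one of them can still be a carry-over sitting in $x$ (they start in distinct individuals), while the other was transmitted through a reproduction and therefore lands in a uniformly random one of $x$'s two gene copies; hence the conditional probability of an actual coalescence, given that both reach $x$, is exactly $\tfrac12$. Consequently $p^N_{\xi_0^2 \mathbf{1}_2} = \PP(\text{both lineages reach a common individual one step back}) = 2c_N$ identically, and $F_{\mathrm{hap}}$ collapses the ``common individual, not yet coalesced'' half onto the merged partition. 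In the limit this identification is correct because, by Assumption~\ref{A: timescale}, two lineages in one individual coalesce before dispersing with probability tending to $1$, and the within-individual excursion is frozen by the time-change $S$ of \eqref{E:TimeChange}, so it consumes no rescaled time. The general binary-merger case follows by exchangeability together with the estimate from (ii) that the remaining lineages stay pairwise apart, and apart from the $x$ hosting the merge, with probability $1 - o(1)$.

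The step I expect to be the main obstacle is the uniform estimate underlying case (ii): one must show that \emph{every} non-binary transition of the $F_{\mathrm{hap}}$-reduced chain is $o(c_N)$, not merely the single triple coalescence $\xi_0^3 \to \mathbf{1}_3$ whose probability is named $\mathfrak{c}_3$. Concretely, one needs a bound, uniform in $N$ for each fixed $n$, of the form $\PP(\text{three prescribed lineages share a common ancestral individual one step back}) \le C_n(\mathfrak{c}_3 + c_N^2)$, established by a short case analysis over how many of the three lineages sit in newborns versus carry-overs; the two-gene-copy accounting behind the Mendelian weights in \eqref{E:triplecoa} (the coefficients $\tfrac14, \tfrac18, \tfrac1{16}, \ldots$) is precisely what makes each such contribution comparable to $\mathfrak{c}_3$ rather than merely to $c_N$. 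The rest --- the asymptotic factorizations used in (i) and (iii), and the moment estimates justifying the $o(\cdot)$ errors --- follows from the polynomial representation and Assumption~\ref{A:c_N}, while the passage from the convergence of $(Q_n)$ to the conclusion of Theorem~\ref{C: ARG_robustness} is delivered entirely by Theorem~\ref{T: quenched_limited_outcrossing}.
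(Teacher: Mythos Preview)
Your proposal is correct and follows essentially the same approach as the paper: verify Assumption~\ref{A:Q_Nn} by showing that under Assumption~\ref{A:triple} all non-binary transitions of the $F_{\rm hap}$-reduced chain are $o(c_N)$ while each binary merger contributes $2c_N + o(c_N)$ (this is the content of the paper's Lemma~\ref{L: one_step_combinatorics_no_triple}), and then invoke Theorem~\ref{T: quenched_limited_outcrossing} with $\Xi = 2\delta_{\mathbf{0}}$. Your explicit argument that the pair rate is $2$---via the observation that, conditional on two lineages reaching a common ancestral individual, the Mendelian coalescence probability is exactly $\tfrac12$---is a clean way to see the factor of $2$ that the paper leaves implicit, and your identification of case~(ii) as the main obstacle matches precisely what Lemma~\ref{L: one_step_combinatorics_no_triple} is devoted to.
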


        \begin{proof}
            By Lemma~\ref{L: one_step_combinatorics_no_triple}, $h_{\xi\xi}^{N,n} = 1 - 2\binom{|\xi|}{2}c_N + o(c_N)$. Therefore, the limiting exit rate of the process $\bar{\chi}^{N,n}$  from the state $\xi\in \En$ is $2\binom{\xi}{2}$ as $N\to\infty$, by Assumptions~\ref{A:c_N}.
            Further, Lemma~\ref{L: one_step_combinatorics_no_triple} demonstrates that $h_{\xi\eta}^{N,n} \in o(c_N)$ for any $\eta\in\En$ that cannot be obtained from $\xi$ via a single binary merger, i.e. for which $\xi \not \prec \eta$. For such $\xi, \eta$ we have that the limiting exit rate from $\xi$ to $\eta$ is $0$ by Assumptions~\ref{A:c_N}. Finally, there are $\binom{|\xi|}{2}$ partitions $\eta$ for which $\xi \prec \eta$. By exchangeability, these are equally likely and so the limiting transition rate from $\xi$ to $\eta$ is $1$ when $\xi \prec \eta$.
            
            It follows from the above that $Q_n$ from Assumption~\ref{A:Q_Nn} exists and is given by
            \begin{equation*}
                Q_n(\xi, \eta) =
                \begin{cases}
                    -2\binom{|\xi|}{2} &, \text{ if } \xi = \eta\\
                    2 &, \text{ if } \xi \prec \eta\\
                    0 &, \text{ otherwise}
                \end{cases}.
            \end{equation*}
            $Q_n$ is the transition-rate matrix of a Kingman $n$-coalescent with time rescaled by a factor of $2$. Hence Assumption~\ref{A:Q_Nn} is satisfied and $(Q_n)_{n \in \N}$ determines a measure $\Xi = 2\delta_{\mathbf{0}}$ on $\Delta$ in the sense of Remark~\ref{R: xi_q_connection}.  As Assumptions~\ref{A:c_N} and \ref{A: timescale} hold, and we have shown that $Q_{N,n}$ converges to the generator of a time-rescaled Kingman $n$-coalescent, the claim follows directly from Theorem~\ref{T: quenched_limited_outcrossing}.
        \end{proof}

        \begin{remark}\rm
            Theorem~\ref{C: ARG_robustness} establishes scaling limits for the models of \cite{NordborgAndDonnelly1997, Mohle1998a, NFW25}, which are alike insofar as they all have a time-rescaled Kingman coalescent for their annealed limits. The models of \cite{Mohle1998a} and \cite{NordborgAndDonnelly1997} are diploid Wright-Fisher models, i.e. $K_N = P_N = N$, while the model of \cite{NFW25} is diploid Moran model, i.e. $K_N = 1$ and $P_N = N$. This corollary establishes that the model of \cite[Section 4.1]{NFW25} is robust as long as the annealed limit of the models is Kingman-like with high selfing rate.
        \end{remark}

    \subsection{Convergence of tree statistics}\label{SS: tree_stats}

        The notion of convergence in what follows is weak convergence in distribution of random measures, which we make precise in Section~\ref{S: weak_convergence_criteria}. The standard reference is \cite[Chapter 4]{Kallenberg2017}. As described briefly in Remark~\ref{R: strength_of_weak_convergence}, the form of weak convergence established in Theorem~\ref{T: quenched_limited_outcrossing} implies that, for any compactly supported function $\varphi : \R_+ \times \En \to \R$, the associated integral functional $\int_0^\infty \varphi(s,\cdot(s))ds$ converges in distribution, i.e.
        \begin{equation*}
            \mathbb{E}\lb{\int_0^\infty \varphi\lp{s,\bar{\chi}^{N,n}(s)}ds \in \cdot \,\mid\, \mathcal{A}_N} \toL \mathbb{E}\lb{\int_0^{\infty} \varphi\lp{s,\chi^n(s)}ds \in \cdot \,\mid\, \Pi}.
        \end{equation*}

        An important class of these integral functionals, which in fact motivated our choice of the form of convergence of the random measures $\mathcal{L}^{N,n}$, are the internal and external branch lengths of the coalescent associated with the  site-frequency spectrum (SFS). Formally, the $r$th total branch length $\tau^{n,r}$ of a coalescent $\chi^n \in \mathcal{D}\lp{\R_+, \En}$ of a sample of size $n$ is defined by
        \begin{equation*}
            \tau^{n,r}(\chi^n) = \int_0^\infty \#\{C \in \chi^n(s) : |C| = r\} ds.
        \end{equation*}
        That is, $\tau^{n,r}$ is the \textit{sum of the lengths of the edges in the coalescent ancestral to $r$ samples}; see Figure \ref{fig:tau} for an illustration. 

\begin{figure}
    \centering
    \includegraphics[width=0.5\linewidth]{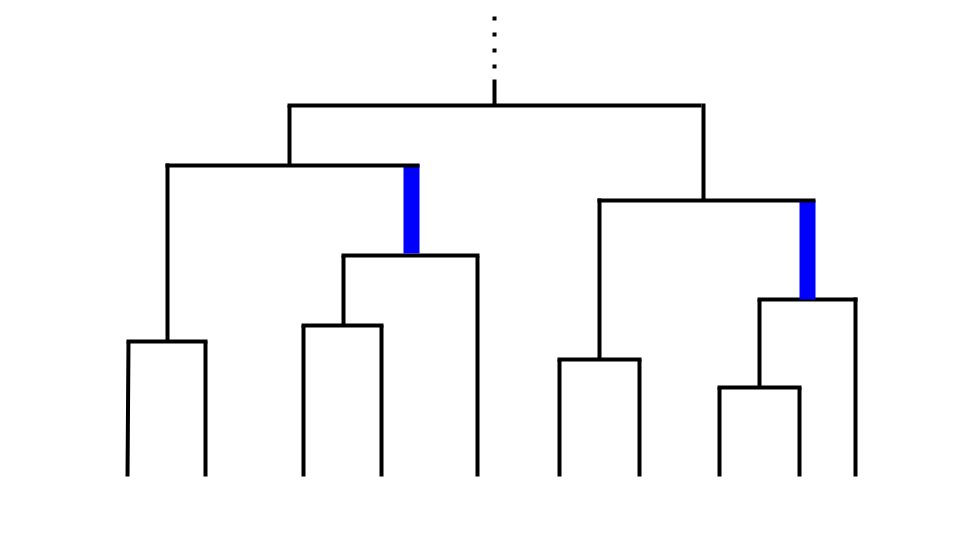}
    \caption{A gene genealogy for $n=10$ samples. Here, $\tau^{10,3}$ is the sum of the lengths of the two thick, blue edges. 
Although we do not consider mutation in this paper, note that any mutations on these branches will be inherited by, and thus present on, exactly $3$ gene copies in the sample.
}
    \label{fig:tau}
\end{figure}

        Note that $\tau^{n,r}$ is not of the form of $\varphi$ described above, so it requires an additional argument to show that these functionals do indeed converge. This is done via a truncation argument.

        \begin{proposition}\label{prop:tauconv}
            Suppose that, as $N \to \infty$, that Assumptions~\ref{A:c_N}, \ref{A: timescale}, and \ref{A:Q_Nn} hold. Then, for any finite $s \in \N$, $$\mathbb{E}\lb{\lp{\tau^{n,r}(\bar{\chi}^{N,n})}_{1 \leq r \leq s} \,\mid\, \mathcal{A}_N} \toL \mathbb{E}\lb{\tau^{n,r}(\bar{\chi}^{N,n})_{1 \leq r \leq s} \, \mid\, \Pi},$$ where $\Pi$ is as described in Theorem~\ref{T: quenched_limited_outcrossing}.
        \end{proposition}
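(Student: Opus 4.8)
The plan is to deduce Proposition~\ref{prop:tauconv} from Theorem~\ref{T: quenched_limited_outcrossing} by a truncation argument, exploiting the fact that each $\tau^{n,r}$ is \emph{almost} a bounded integral functional of the trajectory, failing only because the integration runs over an unbounded time interval. Fix a truncation level $T>0$ and write
\[
\tau^{n,r}_T(\chi) \;:=\; \int_0^T \#\{C\in\chi(s):|C|=r\}\,ds,
\qquad
\varphi_r(s,\xi) \;:=\; \mathbbm{1}_{\{s\le T\}}\,\#\{C\in\xi:|C|=r\}.
\]
The function $\varphi_r$ is bounded and compactly supported on $\R_+\times\En$ (since $\En$ is finite), so Theorem~\ref{T: quenched_limited_outcrossing}, together with the general fact that weak convergence of random probability measures on $\mathcal{D}(\R_+,\En)$ pushes forward under any bounded continuous functional, gives
\[
\mathbb{E}\!\left[\lp{\tau^{n,r}_T(\bar{\chi}^{N,n})}_{1\le r\le s}\,\big|\,\mathcal{A}_N\right]
\;\toL\;
\mathbb{E}\!\left[\lp{\tau^{n,r}_T(\chi^n)}_{1\le r\le s}\,\big|\,\Pi\right]
\qquad\text{as }N\to\infty,
\]
for each fixed $T$. (Here one should check that $\chi\mapsto\tau^{n,r}_T(\chi)$ is continuous for the $J_1$ topology off a set of measure zero under the limit law, which holds because the limit process $\chi^n$ a.s.\ does not jump at time $T$ and has no accumulation of jumps; alternatively, since $\En$ is discrete, $\tau^{n,r}_T$ is continuous at every path with finitely many jumps and none at $T$.)

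The second step is to control the tail $\tau^{n,r}-\tau^{n,r}_T$ uniformly in $N$. For $r<n$ the block-count $\#\{C:|C|=r\}$ is zero once the coalescent has reached the absorbing state $\mathbf 1_n$, so $\tau^{n,r}-\tau^{n,r}_T \le (\text{absorption time} - T)_+$. The plan is to establish a uniform exponential (or at least uniform integrability) bound on the absorption time of $\bar{\chi}^{N,n}$ under the quenched law: one shows that there is a constant $\rho>0$, independent of $N$, such that from any state with at least two blocks the process $\bar{\chi}^{N,n}$ has probability bounded below by $\rho$ of reaching $\mathbf 1_n$ within one unit of rescaled time, and that this lower bound holds $\mathcal{A}_N$-almost surely. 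This follows from the one-step combinatorics already recorded (e.g. $c_N^{-1}(\mathfrak p^{N,n}_{\xi\xi}-1)\to q^n(\xi,\xi)$, the time change~\eqref{E:TimeChange} preventing jump accumulation, and the fact that after $\lfloor c_N^{-1}\rfloor$ steps each pair of lineages has had a positive asymptotic chance to coalesce). This yields, uniformly in $N$,
\[
\sup_N \;\mathbb{E}\!\left[\lp{\tau^{n,r}(\bar{\chi}^{N,n})-\tau^{n,r}_T(\bar{\chi}^{N,n})}\,\big|\,\mathcal{A}_N\right]\;\longrightarrow\;0
\qquad\text{as }T\to\infty,
\]
and the analogous statement for the limit $\chi^n$ (whose absorption time is a.s.\ finite and integrable given $\Pi$). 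For the external branch length $r=1$ the same bound applies, and in general for $r\ge 1$ the integrand vanishes after absorption.

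The final step combines the two: $\mathbb{E}[\tau^{n,r}(\bar{\chi}^{N,n})\mid\mathcal{A}_N]$ converges in distribution to $\mathbb{E}[\tau^{n,r}(\chi^n)\mid\Pi]$ because it is a uniform (in $N$) limit, as $T\to\infty$, of the truncated functionals $\mathbb{E}[\tau^{n,r}_T(\bar{\chi}^{N,n})\mid\mathcal{A}_N]$, each of which converges in distribution by Step~1; a standard $3\varepsilon$-argument (using that a uniform limit of convergent-in-distribution sequences converges in distribution, via e.g.\ the bounded-Lipschitz metric on $\mathcal{M}_1(\R^s)$) closes the proof, and the joint statement over $1\le r\le s$ follows since all components are handled simultaneously with the same truncation. \textbf{The main obstacle} is Step~2: obtaining a tail bound on the absorption time that is uniform over \emph{almost every} pedigree, i.e.\ uniform under the quenched law $\PP(\cdot\mid\mathcal{A}_N)$ rather than merely on average. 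The annealed version is immediate from tightness, but the quenched version requires showing that the pedigree cannot, with non-negligible probability, create a bottleneck that stalls coalescence for an unboundedly long rescaled time — this should follow from the same pedigree-convergence and continuity estimates developed in Section~\ref{S: main_result_proof} for the proof of Theorem~\ref{T: quenched_limited_outcrossing}, applied to the (finite) event that the coalescent fails to be absorbed by time $T$.
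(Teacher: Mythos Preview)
Your truncation strategy is the paper's approach as well: the paper also truncates at level $T$, uses the vague convergence of the quenched occupation measures (Remark~\ref{R: strength_of_weak_convergence}) to deduce convergence of the truncated functionals $\tau^{n,r,T}$, and then passes $T\to\infty$. The paper dispatches that last step with the single phrase ``monotone convergence as $T\to\infty$ yields joint convergence,'' which is precisely the point you have singled out as the main obstacle; so your identification of the issue is apt, and your argument is in fact more explicit than the paper's at this step.

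Where you are working harder than necessary is in Step~2. You do not need an almost-sure \emph{quenched} bound on the absorption time. The objects whose convergence in distribution you want are the random variables $Y_N:=\mathbb{E}\!\left[\tau^{n,r}(\bar{\chi}^{N,n})\mid\mathcal{A}_N\right]$ and their truncations $Y_{N,T}:=\mathbb{E}\!\left[\tau^{n,r}_T(\bar{\chi}^{N,n})\mid\mathcal{A}_N\right]$. The difference $Y_N-Y_{N,T}\ge 0$ has \emph{annealed} expectation equal to $\mathbb{E}\!\left[\tau^{n,r}(\bar{\chi}^{N,n})-\tau^{n,r}_T(\bar{\chi}^{N,n})\right]$, so a uniform-in-$N$ \emph{annealed} tail bound, together with Markov's inequality, already gives $Y_N-Y_{N,T}\to 0$ in probability, uniformly in $N$, as $T\to\infty$. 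That is exactly what your $3\varepsilon$ argument needs. The annealed bound itself is soft: by Assumption~\ref{A:Q_Nn} the exit rate $c_N^{-1}(1-\mathit{p}^{N,n}_{\xi\xi})$ is bounded below uniformly over non-absorbing $\xi\in\En$ and large $N$, which gives a uniform exponential bound on the annealed absorption time of $\bar{\chi}^{N,n}$; alternatively, appeal to Theorem~\ref{T: annealed_limit} and uniform integrability of the absorption time along a weakly convergent sequence of Markov chains on a finite state space. There is no need to rule out ``pedigree bottlenecks'' pathwise.
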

        \begin{proof}
            For each $r\geq 1$ and $\xi\in\En$, set $g_r(\chi^n)=\#\{C\in \chi:|C|=r\}$, which is bounded above by $\lfloor n/r\rfloor$.  
            For $T>0$, define the truncated branch length
            \[
              \tau^{n,r,T}(\chi)=\int_0^T g_r(\chi(t))\,dt.
            \]
            By Remark~\ref{R: strength_of_weak_convergence}, the occupation measures
            $\mu_{\bar\chi^{N,n}}$ converge vaguely to $\mu_{\chi^n}$, so for any  cutoff function
            $\psi_{T,\varepsilon}\in C_c(\R_+)$ with $\int_0^\infty |\psi_{T,\varepsilon}(s)-\mathbf{1}_{[0,T]}(s)| ds \leq s$,
            \[
              \mathbb{E}\lb{\int_0^\infty \psi_{T,\varepsilon}(t)g_r(\bar\chi^{N,n}(t))\,dt \,\mid\, \mathcal{A}_N}
              \toL
              \mathbb{E}\lb{\int_0^\infty \psi_{T,\varepsilon}(t)g_r(\chi^n(t))\,dt \,\mid\, \Pi}.
            \]
            By letting $\varepsilon\to0$ we obtain
            \(
              \tau^{n,r,T}(\bar\chi^{N,n}) \toL \tau^{n,r,T}(\chi^n).
            \)
            Finally, monotone convergence as $T\to\infty$ yields joint convergence of $(\tau^{n,r})_{1\le r\le s}$ under the quenched laws.
        \end{proof}

\subsubsection{The site-frequency spectrum under nearly complete selfing}\label{sec:limitedoutcrossingSFS} 

In light of the importance of the SFS and other measures of genetic variation for inferring past events and processes affecting populations, it would be of some interest to go beyond the convergence result in Proposition~\ref{prop:tauconv} and to describe expected patterns of variation for a range of specific diploid population models conditional on the pedigree.  We leave this to future research, as it is outside the scope of the present work, but we note that a key question is the extent to which the SFS and other measures of genetic variation depend on the pedigree.  

Here we illustrate this using simulations of the (robust) conditional limit described in Theorem \ref{C: ARG_robustness}, namely, the coalescing random walk on the ancestral graph with binary mergers.
In this limit, the ancestral graph representing all the information of the pedigree relevant to the sample is generated by an EFC in which every distinct pair of ancestral lineages coalesce with rate $2$ and every single lineage splits at rate $\lambda$.  After an exponentially distributed waiting time with rate parameter $n(\lambda+n-1)$, one of these $\binom{n}{2} + n$ events is chosen in proportion to its rate.  If two lineages coalesce, the number of ancestral lineages decreases by one.  If a lineage splits, the number of ancestral lineages increases by one.  All events and their times are recorded, and the process stops the first time there is just a single ancestral lineage.  

Note that the marginal coalescent process for the sample is a Kingman coalescent process, which does not depend on $\lambda$.  However, the time it takes to reach a single lineage in the EFC process when $\lambda$ is large may far exceed any reasonable time for any coalescing random walk on the resulting ancestral graph to reach its most recent common ancestor.  Thus, there should be little error in stopping the EFC process before it reaches a single lineage.  The way time is measured here, there would be only a $e^{-20}\approx2\times10^{-9}$ chance of a pairwise coalescence time greater than $10$ in the marginal process.  In our simulations, we set the splitting rate to zero at $10$ units of time, so the remainder of the graph is a single Kingman coalescent tree, which we note preserves the marginal distribution of the process.  

The gene genealogy at a locus is generated by tracing ancestral lineages backward in time through the graph, as in the presentation of Figure~\ref{F: ancestral_graph_walks}.  When a lineage encounters a split, it follows one or the other ancestral line with equal probability, $1/2$.  When two lineages meet at a coalescent event in the graph, they necessarily coalesce. 
Unlinked loci have independent gene genealogies, conditional on the graph.  Figure~\ref{fig:sfs} displays simulation results for the SFS for six different values of $\lambda$ ranging from $1000$ to $0$.  Each panel shows the SFS for five independently generated ancestral graphs assuming a sample of size $n=20$.  Given each graph, gene genealogies of $10^5$ unlinked loci were simulated and their values of $\tau^{n,r}$ were recorded.  Figure~\ref{fig:sfs} plots the averages $\overline{\tau^{n,r}}$ of these branch lengths over the $10^5$ gene genealogies, normalized to sum to one, that is divided by $\sum_r \overline{\tau^{n,r}}$.

In Figure~\ref{fig:sfs}A, which has $\lambda=1000$, the SFS for every graph is not noticeably different than under the Kingman coalescent (not shown). We note that this is also the expectation for a fixed $\alpha_N = \alpha$ \cite{NordborgAndDonnelly1997,NFW25}.  A small effect of the graph (or pedigree) can be seen even when $\lambda=100$ in Figure~\ref{fig:sfs}B.  At the other extreme, when $\lambda=0$ as in Figure~\ref{fig:sfs}F, the entire pedigree of the population is reduced to a single Kingman coalescent tree, and the gene genealogies of every locus have this exact same tree.  Here, the SFS reflects the random outcomes, of branch lengths and numbers of descendants of each branch, of an effectively single-locus coalescent process.  Something quite similar is seen with $\lambda=0.1$ in Figure~\ref{fig:sfs}E.  The middle plots, Figure~\ref{fig:sfs}C and Figure~\ref{fig:sfs}D, with $\lambda=10$ and $\lambda=1$, occupy a transition zone between the deterministic case ($\lambda\to\infty$) where all graphs have the same SFS and the highly stochastic case ($\lambda\to 0$) where each random graph has a distinctly different SFS.  

\begin{figure}[h]
\centering

\includegraphics[scale=1]{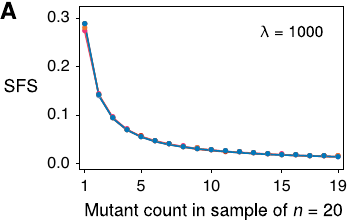}$\qquad$\includegraphics[scale=1]{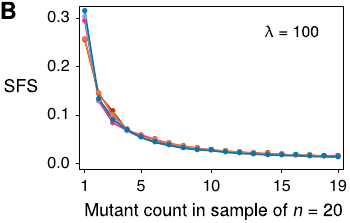}$\quad$

\vspace{1.5em}
 
\includegraphics[scale=1]{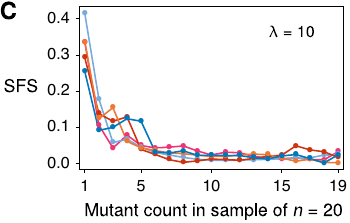}$\qquad$\includegraphics[scale=1]{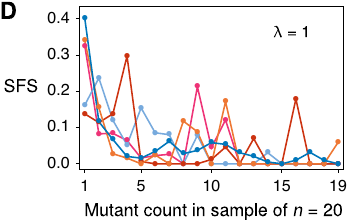}$\quad$

\vspace{1.5em}
 
\includegraphics[scale=1]{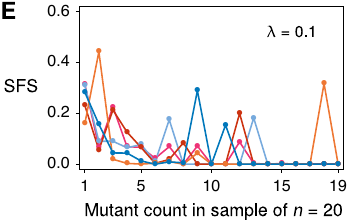}$\qquad$\includegraphics[scale=1]{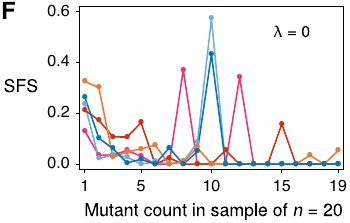}$\quad$
\caption{\small Expected site-frequency spectra under the limited-outcrossing model for $5$ independently generated ancestral graphs, corresponding to $5$ pedigrees, for each of six different values of $\lambda\in\{1000,100,10,1,0.1,0\}$ in decreasing order from panel A to F.  SFS on the vertical axis means the relative expected number of polymorphic sites where the mutant allele is found in each count $r\in\{1,2,3,\ldots,19\}$ in a sample of size $n=20$.  These were estimated by simulating gene genealogies of $10^5$ unlinked loci on each graph/pedigree.}
\label{fig:sfs}
\end{figure}

    \subsection{Diploid Sargasyan-Wakeley model}\label{SS: SW_model_apps}

        Consider our diploid Sargasyan-Wakeley model in Example~\ref{Eg:SW}. In this case \eqref{E:cN_general} reduces to
            \begin{equation}\label{E:cN}
            c_N= \mathbb{E}\lb{
            \frac{K}{N}\frac{N-K}{N-1}\frac{1}{N}
            \,+ \,        \frac{K(K-1)}{N(N-1)} \,\frac{1}{2P}},
            \end{equation} 
        where we noted that $2\frac{K}{N}\frac{N-K}{N-1}$ and $\frac{K(K-1)}{N(N-1)}$  are respectively the probability that exactly one (of the two distinct individuals) is an offspring and the probability that both individuals are offspring. 

        To apply the Theorem~\ref{T: quenched_limited_outcrossing}, we need to find assumptions on the joint distribution $\pi_N$ of $(K_N,P_N)$ under which Assumption~\ref{A:Q_Nn} holds, and for which we can explicitly describe the governing measure $\Xi$. To this end, we write $\pi_N$ as a mixture on the unit square $[0,1]^2$. We assume that there exists a measure $\nu_N$ on $[0,1]^2$ so that
        \begin{equation*}
            \pi_N := \int_{[0,1]^2} \delta_{\lfloor \bar{K} N \rfloor} \otimes \delta_{\lfloor \bar{P} N \rfloor } d\nu_N(\bar{K}, \bar{P}).
        \end{equation*}
        Define a family of measure $\mu_N$ on $[0,1] \times \N$ by
        \begin{equation*}
            \mu_N(dx, m) := \nu_N\lp{dx \times \left[\frac{m}{N}, \frac{m+1}{N} \right)}.
        \end{equation*}

        \begin{thm}\label{T:Mixture}
            Suppose, as $N \to \infty$, that Assumptions~\ref{A:c_N} and \ref{A: timescale} hold, and that $\frac{1}{2c_N}\mu_N$ converges vaguely on $[0,1] \times \N$ to a finite measure $\mu$ for which $\int \frac{x^2}{m} d\mu(x,m) \leq 1$. Define the sub-probability measure $\Theta(\mu)$ on $\Delta\setminus \{\bf{0}\}$ by
            \begin{equation*}
              \Theta(\mu):=  \int_{(0,1]\times\N} \frac{x^2}{m}\,
                \delta_{\big(\frac{x}{m},\ldots,\frac{x}{m},0,0,\ldots\big)}\, d\mu(x,m) \qquad \text{on }\Delta\setminus \{\bf{0}\},
            \end{equation*}
            where the dirac mass has $m$ copies of $\frac{x}{m}$ in a row and then all zeroes. Then, where $\Pi$ is an EFC with $c_k = 2 \big(1- \Theta(\mu)(\Delta\setminus \{\bf{0}\})\big)$ and $\mu_{\nu_{\rm Coag}} = 2\Theta(\mu)$, we have that $\mathcal{L}^{N,n}$ converges weakly in law to $\mathcal{L}_\Pi^n$.
        \end{thm}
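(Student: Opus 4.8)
The plan is to deduce the statement from Theorem~\ref{T: quenched_limited_outcrossing}. Assumptions~\ref{A:c_N} and \ref{A: timescale} are granted, so it suffices to verify Assumption~\ref{A:Q_Nn} for the diploid Sargasyan--Wakeley model and to identify the resulting measure $\Xi$ (in the sense of Remark~\ref{R: xi_q_connection}); concretely, I would check the hypothesis of Lemma~\ref{L: ordered_offspring_distribution} on the ordered offspring distribution and read $\Xi$ off from its limit. First I would describe one reproduction event conditional on $(K,P)=(\lfloor\bar KN\rfloor,\lfloor\bar PN\rfloor)$. Since $\alpha_N\to1$ with $1-\alpha_N=O\lp{Nc_N/\mathbb{E}[K]}$ by Assumption~\ref{A: timescale}, the number of outcrossed children, $\mathrm{Bin}(K,1-\alpha_N)$, has mean $o(N)$, so up to events negligible on the coalescent time scale every child in the event is selfed; then $\tilde V_i=2R_i$ where $R_i\sim\mathrm{Bin}(K,1/P)$ is the number of children of potential parent $i$, the ranked offspring-frequency vector agrees with the ranking of $\lp{R_i/N}_i$ up to $o(1/N)$, and a sample lineage lying in a (selfed) child descends from potential parent $i$ on a uniformly chosen one of its two gene copies.

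I would then split reproduction events into a \emph{diffuse} regime, where $P\to\infty$ along the event (this includes the macroscopic case $P=\Theta(N)$), and a \emph{concentrated} regime, where $P$ stays bounded, $P=m$. In the diffuse regime $R_i/N\le K/(PN)\to0$, so all offspring frequencies vanish; by the one-step combinatorics of Lemma~\ref{L: one_step_combinatorics_no_triple} and the triple-collision estimate \eqref{E:triplecoa}, which is $o(c_N)$ once $P\to\infty$, such events contribute, after the $c_N^{-1}$ rescaling, only pairwise mergers, hence to the Kingman part $c_k\delta_{\mathbf{0}}$. In the concentrated regime with $P=m$, each of the $b$ sample lineages independently either is carried over (probability $\to 1-\bar K$) or becomes a selfed child of a uniformly chosen one of the $m$ potential parents (probability $\to\bar K$), landing on a uniformly chosen one of that parent's two copies. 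Applying the haploid map $F_{\rm hap}$, and noting that for the time-changed process $\bar\chi^{N,n}$ a pair of lineages left in a single individual coalesces before it disperses since $\alpha_N\to1$ (which is precisely the behaviour encoded by the time change \eqref{E:TimeChange} and the machinery behind Theorem~\ref{T: quenched_limited_outcrossing}), lineages that chose the same potential parent get merged. Thus a concentrated event with $P=m$ acts as a $\Xi$-coalescent multiple merger with frequency vector $y_{x,m}:=\lp{\tfrac xm,\dots,\tfrac xm,0,0,\dots}$ ($m$ entries $\tfrac xm$, $x=\bar K$), and by the definition of $\mu_N$ and the hypothesis $c_N^{-1}\mu_N\to\mu$ such events occur in the limit at rate $\mu(dx,m)$. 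Matching this rate against the $\Xi$-coalescent jump rate $\Xi(dy)/\qv{y,y}$ at $y=y_{x,m}$, for which $\qv{y_{x,m},y_{x,m}}=x^2/m$, and carrying the diploid Mendelian bookkeeping (the coin flips over the two parental copies, which among other things fixes the normalisation of the pairwise rate coming from \eqref{E:cN}), yields the non-Kingman part $\nu_{Coag}=2\,\Xi'(\mu)$ and hence $c_k=2\lp{1-\Xi'(\mu)(\Delta)}$. That $c_N^{-1}\mu_N\to\mu$ only \emph{vaguely} on $[0,1]\times\N$ is what makes the intermediate ``escaping'' events ($P\to\infty$, $P=o(N)$) harmless: they carry vanishing frequencies and are absorbed into $c_k$.

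This shows that $Q_n:=\lim_Nc_N^{-1}\lp{\mathit{P}_{N,n}-I}$ exists and equals the generator of the $n$-$\Xi$-coalescent with $\Xi=2\lp{1-\Xi'(\mu)(\Delta)}\delta_{\mathbf{0}}+2\,\Xi'(\mu)$, a measure not depending on $n$; consistency of $\lp{Q_n}_{n\ge2}$ is then automatic, as these are the finite-dimensional projections of a single $\Xi$-coalescent, equivalently of the EFC of Remark~\ref{R: xi_q_connection}. So Assumption~\ref{A:Q_Nn} holds, and Theorem~\ref{T: quenched_limited_outcrossing} applies with this $\Xi$, i.e.\ with $c_k=2(1-\Xi'(\mu)(\Delta))$, $\nu_{Coag}=2\,\Xi'(\mu)$, $c_e=\lambda$ and $\nu_{Disl}=0$; this is exactly the EFC $\Pi$ in the statement, and $\mathcal{L}^{N,n}$ converges weakly as a random measure to $\mathcal{L}_\Pi^n$. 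I expect the main obstacle to be the second step: upgrading the vague convergence $c_N^{-1}\mu_N\to\mu$ to convergence of the generator entries uniformly in $\xi,\eta$ (ruling out anomalous mass and controlling the combined effect of diffuse and concentrated events on a single transition), together with the precise diploid Mendelian bookkeeping of the constants. The ``coalesce-before-dispersing'' reduction, and the tightness needed to pass from finite-dimensional statements to $\mathcal{D}\lp{\R_+,\En}$, should be comparatively routine once one invokes the construction underlying Theorem~\ref{T: quenched_limited_outcrossing}.
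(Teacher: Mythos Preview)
Your proposal is correct and follows essentially the same approach as the paper: both reduce the statement to verifying Assumption~\ref{A:Q_Nn} via Lemma~\ref{L: ordered_offspring_distribution} and then invoke Theorem~\ref{T: quenched_limited_outcrossing}, identifying the limiting ordered offspring frequencies as $\lp{\tfrac{x}{m},\ldots,\tfrac{x}{m},0,0,\ldots}$ in the concentrated regime $P=m$ by a balls-in-boxes concentration argument (your $R_i\sim\mathrm{Bin}(K,1/P)$, the paper's ``variance $O(1/N)$ after normalising by $2N$''), while the diffuse regime $P\to\infty$ contributes only to the Kingman part. Your diffuse/concentrated dichotomy and the lineage-level description of the haploid map give slightly more mechanism than the paper's terse proof, but the key step and the identification $\Xi=2\bigl((1-\Xi'(\mu)(\Delta))\delta_{\mathbf 0}+\Xi'(\mu)\bigr)$ are the same.
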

        \begin{proof}
            To prove the claim we proceed by characterizing the ordered offspring distribution $\Phi_N$ on $\Delta$ from Lemma~\ref{L: ordered_offspring_distribution}. That is, let $\tilde{V}_{(1)}$, $\tilde{V}_{(2)}$, $\ldots$, $\tilde{V}_{(N)}$ denote the ordered genetic contribution of individuals between time-steps as in Lemma~\ref{L: ordered_offspring_distribution}. $\Phi_N$ is defined to be the law of the infinite tuple
            \begin{equation*}
                \mathfrak{V}_N = \lp{\frac{\tilde{V}_{(1)}}{2N}, \frac{\tilde{V}_{(2)}}{2N}, \ldots, \frac{\tilde{V}_{(N)}}{2N}, 0, 0, \ldots} \in \Delta.
            \end{equation*}

            We claim that as $N\to\infty$,
            \begin{equation*}
                \frac{1}{2c_N}\Phi_N(dx) \to \frac{1}{\left \langle x,x\right\rangle} \Theta(\mu)(dx)
            \end{equation*}
            vaguely on $\Delta \setminus\{\mathbf{0}\}$. Lemma~\ref{L: ordered_offspring_distribution} would imply that Assumption~\ref{A:Q_Nn} holds, and that $Q_n$ is the infinitesimal generator of an $n$-$\Xi$-coalescent with $$\Xi = 2\Theta(\mu) + 2 \Big(1- \Theta(\mu)(\Delta\setminus \{\bf{0}\})\Big)\,\delta_{\mathbf{0}},$$
            giving the desired formula for $c_k$ and $\mu_{\nu_{\rm Coag}}$.
            
            An application of Theorem~\ref{T: quenched_limited_outcrossing} then gives the result. It suffices, therefore, to show that the conditions of Lemma~\ref{L: ordered_offspring_distribution} hold.
            
            By definition of vague convergence it suffices to show that, for any $f = g(x_1,\ldots, x_M)$ a function of the first $M$ entries of an element $x$ in $\Delta$ with $f(\mathbf{0}) = 0$ that
            \begin{equation*}
                \frac{1}{2c_N}\Phi_N(f) \to \int \frac{f(z)}{\left \langle z,z \right\rangle} \Theta(\mu)(dz).
            \end{equation*}
            Let $\iota: (0,1] \times \N \to \Delta$ denote the embedding
            \begin{equation*}
                (x,m) \mapsto \lp{\frac{x}{m}, \ldots, \frac{x}{m}, 0,\ldots},
            \end{equation*}
            where $(x,m)$ is mapped to the element of $\Delta$ consisting of $m$ copies of $\frac{x}{m}$ and then infinitely many zeroes. With this notation we have that
            \begin{equation*}
                \int \frac{f(z)}{\left \langle z,z \right\rangle} \Theta(\mu)(dz) = \int \frac{f\circ \iota(x,m)}{\frac{x^2}{m}}d\Theta(\mu)(x,m)
                = \int f\circ \iota(x,m) d\mu(x,m) = \mu(f\circ \iota).
            \end{equation*}

            The convergence will follow from a concentration of measure result, which says that
            \begin{equation}\label{E: concentration_phenomenon}
                \PP\lp{\norm{\mathfrak{V}_N - \iota(dx,m)} \geq \epsilon \,\mid\, \pi_N \in [x,x+dx] \times \{m\})} \leq \frac{1}{\epsilon}O\lp{\frac{1}{N}}.
            \end{equation}
            That is $\mathfrak{V}_N$, conditional on the number of parents and offspring $\pi_N$ being in the rectangle $dx \times \{m\}$, concentrates around $\iota(dx,m)$. This implies the main result as it implies, by continuity of $f$ and compactness of $\Delta$, that
            \begin{equation*}
                \frac{1}{2c_N} \Phi_N(f) = \frac{1}{2c_N}\mu_N(f \circ \iota) + o(1) \to \mu(f\circ \iota),
            \end{equation*}
            where the last arrow follows from the assumption that $\frac{1}{2c_N}\mu_N$ converges vaguely to $\mu$ on $(0,1]\times \N$.

            We now justify \eqref{E: concentration_phenomenon}. Conditional on $\pi_N \in [x,x+dx] \times \{m\}$,
            exactly $\lfloor dx \, N \rfloor$ diploid offspring are produced by the $m$ potential parents. Ignoring outcrossing events for the moment, the vector of parental contributions $(W_1, \ldots, W_m)$ is multinomially distributed with parameters $\lfloor dx\, N \rfloor$ and cell probabilities $(\frac{1}{m}, \ldots, \frac{1}{m})$. In particular,
            \begin{equation*}
            \mathbb{E}\lb{W_i} = \frac{dx \, N}{m}
            \end{equation*}
            and
            \begin{equation*}
                Var\lp{W_i} = \frac{dx\, N}{m}\lp{1-\frac{1}{m}}.
            \end{equation*}
            When we renormalize by $2N$, we obtain
            \begin{equation*}
                Var\lp{\frac{W_i}{2N}} = O\lp{\frac{x}{mN}}.
            \end{equation*}
            Chebyshev's inequality for each fixed $i \leq m$ and a union bound therefore yield \eqref{E: concentration_phenomenon}. This union bound works because $f$ depends only on finitely many components of each element of $\Delta$. 
            
            Finally, we need to explain why this argument applies when we allow for outcrossing. Because of Assumption~\ref{A: timescale}, the proportion of offspring arising from outcrossing events is $o(1)$. These contribute a vanishingly small proportion of the genetic contributions of each of the parents.

        \end{proof}

    \subsection{Mixed demographies}\label{SS: mixed_demographies}

        Many models of interest involve mixed demographic behavior, such as Kingman-like behavior with rare extreme events, as in \cite{birkner2013, DFBW24}. Suppose that we have two different parent-offspring distributions $\pi_N^{(1)}$ and $\pi_N^{(2)}$ on $[N] \times [N]$ for the Sargasyan-Wakeley model of Example~\ref{Eg:SW}. 
        
        Let $c_N^{(1)}$ and $c_N^{(2)}$ denote the one-step transition probabilities for two sample lineages in distinct individuals coalescing in a single time-step. Similarly, we let $d_N^{(1)}$ and $d_N^{(2)}$ denote the outcrossing probabilities for a single time-step for each of the two demographies. For simplicity, we assume that the selfing probabilities $\alpha_N^{(1)}$ and $\alpha_N^{(2)}$ for the two demographic histories are identical, i.e. $\alpha_N^{(1)} = \alpha_N^{(2)} = \alpha_N$. For any fixed $\rho > 0$ we define the mixed parent-offspring distribution $\pi_N$ by
        \begin{equation}\label{mixedpiN}
            \pi_N = (1-\rho c_N^{(1)}) \pi_N^{(1)} + \rho c_N^{(1)}\pi_N^{(2)}.
        \end{equation}
        Finally, let $\mathcal{A}_N^{\rm mixed}$ denote the random pedigree governed by the mixed demography $\pi_N$ and $\bar{\chi}^{N,n}$ denote the coalescent with time rescaled by $c_N^{(1)}$, as in Equation~\eqref{E: bar_definition}. The quenched limit of gene genealogies for the mixed demography is characterized by the following corollary.

        \begin{corollary}\label{C: mixed_demographies}
            Suppose that $c_N^{(1)} \to 0$, $\rho c_N^{(2)} \to \rho'< \infty$,
            $\lp{c_N^{(1)}}^{-1}d_N^{(1)} \to \lambda < \infty$, and  $d_N^{(2)} \to 0$ as  $N \to \infty$. Suppose also that, for each $i \in \{1,2\}$, 
            the Sargasyan-Wakeley model (Example~\ref{Eg:SW}) under
            $\pi_N^{(i)}$ satisfies Assumption~\ref{A:Q_Nn} and the measure on $\Delta$ governing the limiting annealed coalescence 
            is $\Xi^{(i)}$. Then Assumption~\ref{A:Q_Nn} holds for the Sargasyan-Wakeley model under $\pi_N$ defined by \eqref{mixedpiN}, and
\[
\PP\lp{\tilde{\chi}\lp{\lfloor t c_N^{(1)}(1 + \rho') \rfloor} \in \cdot \,\mid\, \mathcal{A}_N^{\rm mixed}} \towd \mathcal{L}_{\Pi}^n \quad \text{in} \quad \mathcal{M}_1(\mathcal{D}\lp{\R_+, \En}),
\]
where $\Pi$ is an EFC with characteristics 
\begin{align}
 c_k =& \frac{1}{1+\rho'}\Xi^{(1)}(\mathbf{0}) + \frac{\rho'}{1 + \rho'} \Xi^{(2)}(\mathbf{0}), \label{E:ck_mixture1}\\
\nu_{Coag} =& \frac{1}{1+\rho'}\Xi^{(1)} + \frac{\rho'}{1+\rho'} \Xi^{(2)} - c_k\delta_{\mathbf{0}}, \label{E:ck_mixture2}\\   
c_e =& \frac{\lambda}{1+\rho'} \quad \text{and}\quad \nu_{Disl} = 0. \label{E:ck_mixture3}
\end{align}

        \end{corollary}
        \begin{proof}
            As the two demographies satisfy Assumption~\ref{A:Q_Nn}, so  does $\pi_N$. If $Q_n^{(i)}$ is the generator of the demography $\pi_N^{(i)}$ then we have that the generator of the mixed demography $Q_n^{\rm mixed}$ is 
            \begin{equation*}
                Q_n^{\rm mixed} = \frac{1}{1+\rho'}Q_n^{(1)} + \frac{\rho'}{1+\rho'} Q_n^{(2)} + o(1),
            \end{equation*}
            as 
            \begin{equation*}
                c_N= (1-\rho c_N^{(1)})c_N^{(1)} + \rho c_N^{(1)}c_N^{(2)} = c_N^{(1)}(1+\rho' + o(1)).
            \end{equation*}
            Observe that consistency of generators is closed under linearity, and so $\lp{Q_n^{\rm mixed}}_{n \in \N}$ is a consistent family of generators. Further,
            the relationship connecting infinitesimal generators on $\Enfty$ and measures on $\Delta$ described in Remark~\ref{R: xi_q_connection} is linear. Hence there is a unique measure $\Xi^{\rm mixed}$ associated to the family $\lp{Q_n^{\rm mixed}}$ given by
            \begin{equation*}
                \Xi^{\rm mixed} = \frac{1}{1+\rho'}\Xi^{(1)} + \frac{\rho'}{1+\rho'} \Xi^{(2)}.
            \end{equation*}
            As $d_N^{(1)}(c_N^{(1)})^{-1} \to \lambda < \infty$ and $d_N^{(2)} \to 0$ the outcrossing probability of the mixed demography $d_N^{\rm mixed} = (1-\rho c_N^{(1)})d_N^{(1)} + \rho c_N^{(1)}d_N^{(2)}$ satisfies $d_N^{\rm mixed} \lp{c_N^{(1)}(1+\rho')}^{-1} \to \frac{\lambda}{1+\rho'}$.
            The result then follows directly from Theorem~\ref{T: quenched_limited_outcrossing}.
        \end{proof}

        As a particular application of Corollary~\ref{C: mixed_demographies}, we adapt the model of \cite{birkner2013}. In the model of \cite{birkner2013} we have that 
        \begin{equation}\label{E: BBE_demography}
            \pi_N^{\rm BBE} = \lp{1-\frac{\rho}{N(N-1)}} \delta_{1} \otimes \delta_N + \frac{\rho}{N(N-1)}\delta_{\lfloor \psi N \rfloor} \otimes \delta_2,
        \end{equation}
        where $\psi$ is a fixed number in $[0,1]$. That is, with probability $1-\frac{\rho}{N^2}$ a single time-step consists of a single reproductive event, a single offspring born to any one of the $N$ potential parents with probability $\alpha_N$ or to any of the $\binom{N}{2}$ pairs of parents with probability $1-\alpha_N$; that is, it behaves as a Moran model as in \cite{NFW25}. With probability $\frac{\rho}{N^2}$, two parents reproduce to form $\lfloor \psi N \rfloor$ of the individuals in the nest time-step (going forward in time.) Let $\mathcal{A}_N^{\rm BBE}$ denote the pedigree associated to $\pi_N^{\rm BBE}$.

        \begin{corollary}\label{Cor:BBE13}
            Suppose that, as $N \to \infty$, we have that $(1-\alpha_N)N^{-1} \to \lambda < \infty$. Then the quenched law of the time-rescaled coalescent associated to $\pi_N^{\rm BBE}$
            \begin{equation*}
                \PP\lp{ \tilde{\chi}^{N,n} \lp{\left\lfloor t \lp{c_N^{(1)}\lp{1+\rho\frac{\psi^2}{2}}}^{-1} \right\rfloor} \in \cdot \,\mid\, \mathcal{A}_N^{\rm BBE}}
            \end{equation*}
            converges weakly in law to $\mathcal{L}_\Pi^n$, where $\Pi$ is the EFC of Example~\ref{X: psi_model}.
        \end{corollary}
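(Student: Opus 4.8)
The plan is to present $\pi_N^{\rm BBE}$ from \eqref{E: BBE_demography} as a mixed demography in the sense of Corollary~\ref{C: mixed_demographies} and then read off the characteristics of the limiting EFC. I would take the base demography $\pi_N^{(1)}=\delta_1\otimes\delta_N$, the diploid Moran model of \cite{NFW25}, and the rare demography $\pi_N^{(2)}=\delta_{\lfloor\psi N\rfloor}\otimes\delta_2$, a single burst in which one couple produces a macroscopic fraction $\psi$ of the next time-step. From \eqref{E:cN} one has $c_N^{(1)}=N^{-2}\to0$; since $\pi_N^{\rm BBE}$ uses mixing weight $\rho/(N(N-1))$ rather than $\rho\,c_N^{(1)}=\rho/N^{2}$, I would rewrite it exactly in the required form by replacing $\rho$ with the $N$-dependent $\tilde\rho_N:=\rho N/(N-1)\to\rho$. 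Again from \eqref{E:cN}, taking $K=\lfloor\psi N\rfloor$, $P=2$ gives $c_N^{(2)}\to\psi^{2}/4$, hence $\rho\,c_N^{(2)}\to\rho':=\rho\psi^{2}/4<\infty$, while \eqref{E: d_N_Def} gives $d_N^{(1)}=(1-\alpha_N)/N$ and $d_N^{(2)}=(1-\alpha_N)\psi$, so $d_N^{(2)}\to0$ and $d_N^{(1)}(c_N^{(1)})^{-1}=(1-\alpha_N)N\to\lambda$, which is the hypothesis of Corollary~\ref{C: mixed_demographies} in the form required there (and is consistent with Assumption~\ref{A: timescale} applied to the Moran block).

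Next I would identify the two governing measures $\Xi^{(1)},\Xi^{(2)}$. For $\pi_N^{(1)}$, since $K_N=1$ no three distinct-individual lineages can merge in a single step, so $\mathfrak c_3^{(1)}=0$, Assumption~\ref{A:triple} holds trivially, and --- by the argument already in the proof of Theorem~\ref{C: ARG_robustness} via Lemma~\ref{L: one_step_combinatorics_no_triple} --- Assumption~\ref{A:Q_Nn} holds with $Q_n^{(1)}$ the generator of a rate-$2$ Kingman $n$-coalescent, i.e.\ $\Xi^{(1)}=2\delta_{\mathbf 0}$. For $\pi_N^{(2)}$ I would analyze the ordered genetic-contribution vector $\big(\tilde V_{(1)}/(2N),\tilde V_{(2)}/(2N),\ldots\big)$ of a single burst: because $\alpha_N\to1$ essentially all $\lfloor\psi N\rfloor$ children are selfed, and a balls-in-boxes concentration bound (exactly as used in the proof of the diploid Sargasyan--Wakeley theorem of Section~\ref{SS: SW_model_apps}) shows that they split between the two potential parents in proportions $\to\tfrac12,\tfrac12$ with vanishing spread, so $\tilde V_i/(2N)\to\psi/2$ at the two parental indices and $\to0$ elsewhere. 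Hence the ordered-offspring law $\Phi_N^{(2)}$ concentrates at $x^\star:=(\psi/2,\psi/2,0,0,\ldots)\in\Delta$, and Lemma~\ref{L: ordered_offspring_distribution} yields that $\pi_N^{(2)}$ satisfies Assumption~\ref{A:Q_Nn} in the sense needed --- the limit $\lim_N (c_N^{(2)})^{-1}(P_{N,n}^{(2)}-I)$ exists even though $c_N^{(2)}\not\to0$, because $P_{N,n}^{(2)}$ converges to the one-step transition matrix of a single $\Xi$-coalescent jump --- with $Q_n^{(2)}$ the generator of an $n$-$\Xi^{(2)}$-coalescent governed by $\Xi^{(2)}=2\,\delta_{x^\star}$, which carries no atom at $\mathbf 0$.

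Feeding $\Xi^{(1)}=2\delta_{\mathbf 0}$, $\Xi^{(2)}=2\delta_{x^\star}$, $\rho'=\rho\psi^{2}/4$ and $c_e=\lambda$ into Corollary~\ref{C: mixed_demographies}, the quenched law of $\tilde{\chi}^{N,n}(\lfloor t(c_N^{(1)})^{-1}\rfloor)$ given $\mathcal A_N^{\rm BBE}$ converges weakly in distribution to $\mathcal L_\Pi^n$ for $\Pi$ the EFC with $c_k=\Xi^{(1)}(\mathbf 0)+\rho'\Xi^{(2)}(\mathbf 0)=2$, $\nu_{\rm Coag}=\Xi^{(1)}+\rho'\Xi^{(2)}-c_k\delta_{\mathbf 0}=2\rho'\delta_{x^\star}=\tfrac{\rho\psi^{2}}{2}\,\delta_{x^\star}$, $c_e=\lambda$, $\nu_{\rm Disl}=0$ --- that is, $\Pi$ is the $(\psi,\lambda,\rho)$-EFC of Example~\ref{X: psi_model}, with the reproduction-rate parameter $\rho$ entering (as it must) through $\nu_{\rm Coag}$.

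The main obstacle is not conceptual but is precisely the verification that the burst demography $\pi_N^{(2)}$ fits the framework even though it violates Assumption~\ref{A:c_N} on its own time-scale ($c_N^{(2)}\not\to0$): one must show that $(c_N^{(2)})^{-1}(P_{N,n}^{(2)}-I)$ still converges to a consistent family of (here non-infinitesimal) generators and that this limit is the $n$-$\Xi^{(2)}$-coalescent generator, which is where the balls-in-boxes concentration and Lemma~\ref{L: ordered_offspring_distribution} do the work and where the factor-$2$ diploid normalizations --- the same ones that produce $\Xi^{(1)}=2\delta_{\mathbf 0}$ --- must be tracked so that $\rho'\Xi^{(2)}$ comes out to $\tfrac{\rho\psi^{2}}{2}\delta_{x^\star}$. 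The remaining points --- absorbing the $O(1/N)$ discrepancy between the BBE weight $\rho/(N(N-1))$ and $\rho\,c_N^{(1)}$ via $\tilde\rho_N\to\rho$, and noting that rescaling time by $c_N^{(1)}$ rather than by the true mixed $c_N^{\rm mixed}=c_N^{(1)}(1+\rho c_N^{(2)}-\rho c_N^{(1)})\sim(1+\rho')c_N^{(1)}$ merely dilates the limiting time axis by the constant $1+\rho'$ already accommodated in the statement of Corollary~\ref{C: mixed_demographies} --- are routine bookkeeping.
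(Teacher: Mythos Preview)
Your overall strategy matches the paper's: cast $\pi_N^{\rm BBE}$ as a mixed demography and apply Corollary~\ref{C: mixed_demographies}, with $\Xi^{(1)}=2\delta_{\mathbf 0}$ from the Moran part via Theorem~\ref{C: ARG_robustness} and $\Xi^{(2)}$ concentrated at $x^\star=(\psi/2,\psi/2,0,\ldots)$ for the burst. The difference is in how $\Xi^{(2)}$ is obtained. The paper computes the one-step transition probabilities for $\pi_N^{(2)}$ by hand, enumerating how $l$ lineages distribute among the two parents (splitting into the cases $k_2\ge2$, $k_2=1$, and $\eta=\xi_0^l$) and verifying that these coincide with the paintbox probabilities at $x^\star$. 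Your route via concentration of the ordered offspring frequencies is more conceptual, but your appeal to Lemma~\ref{L: ordered_offspring_distribution} is not quite licit: that lemma assumes Assumption~\ref{A:c_N}, i.e.\ $c_N\to0$, and its proof genuinely uses this in the $A_\varepsilon/A_\varepsilon^{\mathrm c}$ decomposition. What you actually need --- and what your balls-in-boxes argument really delivers --- is the more elementary statement that $\Phi_N^{(2)}\to\delta_{x^\star}$ weakly on $\Delta$, whence $P_{N,n}^{(2)}$ converges to the paintbox transition matrix at $x^\star$ by boundedness and continuity of the paintbox probabilities in $x$. This gives $\lim_N(P_{N,n}^{(2)}-I)$ directly, and that is all that enters the mixed-demography generator via $\rho'Q_n^{(2)}=\rho\lim_N(P_{N,n}^{(2)}-I)$. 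Replace the appeal to Lemma~\ref{L: ordered_offspring_distribution} with this direct continuity step and your argument is complete; the paper's explicit combinatorics then amounts to a verification of the same paintbox limit. (Your value $c_N^{(2)}\to\psi^2/4$ differs from the paper's stated $\psi^2/2$ by a factor of two, but the discrepancy is absorbed into $\Xi^{(2)}$ and both routes arrive at $\nu_{\rm Coag}=\tfrac{\rho\psi^2}{2}\delta_{x^\star}$.)
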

        \begin{proof}
            We follow the notation introduced for Corollary~\ref{C: mixed_demographies}, where $\pi_N^{(1)} = \delta_1 \otimes \delta_2$ and $\pi_N^{(2)} = \delta_{\lfloor \psi N \rfloor} \otimes \delta_2$. With this notation we have that
            \begin{equation*}
                c_N^{(1)} = \frac{1}{N(N-1)} \quad \text{ and } c_N^{(2)} = \frac{\psi^2}{2} + o(1). 
            \end{equation*}
            Therefore we may write Equation~\eqref{E: BBE_demography} as 
            \begin{equation*}
                \pi_N^{\rm BBE} = (1-\rho c_N^{(1)})\pi_N^{(1)} + \rho c_N^{(1)} \pi_N^{(2)},
            \end{equation*}
            and we see that $\rho c_N^{(2)} \to \rho \frac{\psi^2}{2}$ (i.e. $\rho'=\rho \frac{\psi^2}{2}$ in the assumption of Corollary~\ref{C: mixed_demographies}).

            Observe now that the probability of three sample lineages in three distinct individuals coalescing in a single time-step for $\pi_N^{(1)}$ is zero. One readily checks that the assumptions of Theorem~\ref{C: ARG_robustness} are satisfied and hence $\Xi^{(1)} = 2\delta_{\mathbf{0}}$. 

            We now calculate the transition-rate matrix $Q_l^{(2)} = \lp{q^{l,2}_{\xi\eta}}_{\xi,\eta\in\mathcal{E}_l}$ for $\pi_N^{(2)}$. It suffices to do so for $q_{\xi_0^l\eta}^l$ for a fixed $l \in \Z_+$ by exchangeability. Observe firstly that if $\eta$ consists of at least three non-singleton blocks, then we immediately have that $q_{\xi_0^l\eta}^l = 0$. This is simply because their are precisely two parents under $\pi_N^{(2)}$. 
            
            Suppose now that $\eta$ consists of $b$ blocks $C_1, C_2, ..., C_b$ of sizes $k_1, k_2, \ldots, k_b$, at most two of which are non-singletons. Where $s$ is the number of singletons, we write that $\eta$ is of type $b;k_1,\ldots,k_r;s$, as in \cite{schweinsberg}. We only need to consider the cases now where $s \geq b-2$. Observe that the probability that any of the $l$ individuals containing a sample lineage is one of the two parents is $o(1)$ as $N\to\infty$, which will be negligible for $\pi_N^{(2)}$. We proceed now assuming that $k_1 \geq 2$ and that $k_1 \geq k_2 \geq \ldots \geq k_b$ without loss of generality.

            For any $j$ of the $l$ sample lineages, the probability that they are the offspring of one of the two parents is $\psi^j(1-\psi)^{l-j} + o(1)$. If $k_2 \geq 2$, then there is only one way for $\eta$ to appear, by $k_1$ sample lineages falling into one parent, $k_2$ in the other, and the remaining $l - k_1-k_2$ remaining sample lineages not being offspring. Therefore
            \begin{equation}\label{E: Q_calc_1}
                q^{l,2}_{\xi_0^l \eta} = 2^{1-k_1-k_2}\psi^{k_1+k_2}(1-\psi)^{l-k_1-k_2}+o(1).
            \end{equation}
            The factor of $2$ comes from the $k_1+k_2$ sample lineages sorting themselves into the right parents, where the order of the parents does not matter.

            If $k_2 = 1$, then $k_1$ sample lineages fall into one of the two parents. It is possible that exactly one or 0 of the remaining $l-k_1$ sample lineages fall into the other parent. Therefore
            \begin{equation}\label{E: Q_calc_2}
                q^{l,2}_{\xi_0^l \eta} = 2\lp{\frac{\psi}{2}}^{k_1}\lp{\frac{\psi}{2}(1-\psi)^{l-k_1-1} + (1-\psi)^{l-k_1}}+o(1) = 2^{1-k_1}(1-\psi)^{l-k_1} \frac{2-\psi}{2-2\psi} + o(1).
            \end{equation}
            Note that the first term in the summand comes from one of the $l-k_1$ remaining sample lineages falling into one of the two parents, while the second term comes from calculating that none of the remaining sample lineages do.

            Finally, we calculate $q^{l,2}_{\xi_0^l \xi_0^l}$. There can be zero, one, or two of the $l$ sample lineages that enter the two parents. Zero lineages entering the two parents occurs with probability $(1-\psi)^l + o(1)$ as $N\to\infty$. There are $\binom{l}{1}$ ways for $1$ sample lineage to be in one of the two parents, and so this occurs with probability $\binom{l}{1}\psi(1-\psi)^{l-1}+o(1)$. There are $\binom{l}{2}$ ways for $2$ sample lineages to be in the two parents, and in this case they do not coalesce together with probability $\frac{1}{2}$. Therefore
            \begin{equation}\label{E: Q_calc_3}
                q^{l,2}_{\xi_0^l \xi_0^l} = (1-\psi)^l + l(1-\psi)^{l-1}\psi + \frac{l(l-1)}{4}(1-\psi)^{l-2}\psi^2 + o(1).
            \end{equation}

            Combining Equations~\eqref{E: Q_calc_1}, \eqref{E: Q_calc_2}, and \eqref{E: Q_calc_3} shows that $Q_n^{(2)}$ corresponds exactly to the measure $\Xi^{(2)} = 2\delta_{\lp{\frac{\psi}{2}, \frac{\psi}{2}, 0, \ldots}}$ on $\Delta\setminus \{\bf{0}\}$.
        \end{proof}

    \medskip

    Having stated our main result and its applications, we organize the rest of the paper as follows: In Section~\ref{S: weak_convergence_criteria} we provide a simple criterion by which weak convergence of random measures over the Skorokhod space $\mathcal{D}\lp{\R_+, E}$ converges when $E$ is taken to be a locally compact Polish space. In Section~\ref{S: main_result_proof} we show that that the subgraph of the pedigree consisting of possible lineage trajectories converges to a $Q$-$\lambda$ graph, which is then realized via a coupling argument as a subgraph of an EFC. With the characterization of weak convergence of random measures in hand, the quenched convergence of coalescing random walks on the pedigree is shown by a continuity argument.

\section{Characterization of weak convergence in distribution of random measures on \texorpdfstring{$\mathcal{D}\lp{\R_+, E}$}{}}
\label{S: weak_convergence_criteria}

To prove Theorem~\ref{T: quenched_limited_outcrossing} we shall develop a characterization of weak convergence in distribution of random probability measures in Theorem \ref{T: random_measure_integral_functionals}. We are not aware of any previous recognition of this characterization in the literature, so Theorem \ref{T: random_measure_integral_functionals} may have general interest.

More precisely, we provide in this section sufficient criteria to characterize weak convergence in distribution when the Polish space $S$ in question is precisely $\mathcal{D}\lp{\R_+, E}$ with the $J_1$ topology, where $E$ is a locally compact Polish space (such as $\mathcal{E}_n$ or $\mathcal{E}$). The basic strategy is to construct a suitable dense family in $C_b\lp{\mathcal{D}\lp{\R_+, E}}$ and then to devise criteria by which evaluations our random measures against this suitable class of test functions converge.

Let $\mathcal{S}$ be a Polish space, such as $\mathcal{D}\lp{\R_+, E}$.
Under the weak topology on $\mathcal{M}_1\lp{\mathcal{S}}$, convergence in distribution (denoted by $\toL$ in this paper) of a sequence  $(\mu_N)_{N \in \N}$ of \textit{random }elements in $\mathcal{M}_1\lp{\mathcal{S}}$  is also called ``weak convergence in distribution" and is denoted by $\towd$ in \cite[Chapter 4]{Kallenberg2017}. That is,            
            \begin{definition}\label{D: weak_convergence}
                Let $\mu$ and $(\mu_N)_{N \in \N}$ be random probability measures on  a Polish space $\mathcal{S}$. We say that $\mu_N$ \textbf{converges weakly in law} to $\mu$ and write
                \[
                \mu_N \towd \mu \quad \text{in} \quad \mathcal{M}_1(\mathcal{S})
                \]
                if $\mu_N \toL \mu$ when $\mathcal{M}_1(S)$ is equipped with the weak topology.
            \end{definition}
            By \cite[Theorem 4.19]{Kallenberg2017}
            \begin{equation}\label{iff}
                \mu_N \towd \mu \text{\quad if and only if\quad } \mu_N(f) \toL \mu(f)
            \end{equation}
            for any continuous and bounded function of bounded support $f$.
            We use the latter equivalent characterization throughout for the Polish space $\mathcal{S} = \mathcal{D}\lp{\R_+, \mathcal{E}_n}$.

    \subsection{\texorpdfstring{A dense family in $C_b\lp{\mathcal{D}\lp{\R_+, E}}$}{}}

        Suppose that $E$ is a locally compact Polish space. Then $\mathcal{D}\lp{\R_+, E}$ is a Polish space with the $J_1$ topology \cite[Theorem 5.6, p.121]{ethier2009markov}, under a bounded metric; see \cite[Chapter 3, eqn. (5.2)]{ethier2009markov}.
        
        We shall describe a dense subset of $C_b\lp{\mathcal{D}\lp{\R_+, E}}$ in Lemma~\ref{L: density} below when $C_b\lp{\mathcal{D}\lp{\R_+, E}}$ is given the topology of uniform convergence on compacta. To this end, we first describe an explicit continuous function on $\mathcal{D}\lp{\R_+, E}$. 
        
        Fix $\varphi\in C_c\lp{\R_+ \times E}$ and $a \in \R$, and define the function $I(\varphi, a):\,\mathcal{D}\lp{\R_+, E}\to\R$ by
        \begin{equation*}
            I(\varphi, a)(x) := a +  \int_0^\infty \varphi(s, x(s)) ds.
        \end{equation*}

        Here, for any topological space $X$ we take $C_{c}(X)$ to denote the continuous functions from $X$ into $\R$ with compact support. 
        \begin{lemma}\label{L:cts}
            For any $\varphi\in C_{c}\lp{\R_+ \times E}$ and $a\in \R$, $I(\varphi, a)$ is a bounded and continuous function on $\mathcal{D}\lp{\R_+, E}$. 
        \end{lemma}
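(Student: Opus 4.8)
The plan is to establish boundedness and continuity of $I(\varphi,a)$ separately, since boundedness is essentially immediate and continuity is the substantive point. For boundedness: since $\varphi\in C_c(\R_+\times E)$, its support is contained in $[0,T]\times E$ for some $T<\infty$ and $\|\varphi\|_\infty<\infty$, so for every $x\in\mathcal{D}(\R_+,E)$ we have $|I(\varphi,a)(x)|\le |a| + T\,\|\varphi\|_\infty$, uniformly in $x$. Hence $I(\varphi,a)$ is bounded.

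For continuity, suppose $x_N\to x$ in $\mathcal{D}(\R_+,E)$ with the $J_1$ topology. I would first recall the standard characterization: there exist increasing homeomorphisms $\lambda_N:\R_+\to\R_+$ with $\lambda_N\to\mathrm{id}$ and $\sup_{s\le T}\mathrm{d}_E\big(x_N(\lambda_N(s)),x(s)\big)\to 0$ for every $T$ that is not a (fixed) discontinuity time — more precisely, $J_1$-convergence on $\R_+$ means convergence in $\mathcal{D}([0,T],E)$ for all but countably many $T$, and one can always take $T$ outside this exceptional set and also a continuity point of $x$. Now write
\begin{equation*}
I(\varphi,a)(x_N) - I(\varphi,a)(x) = \int_0^\infty \big[\varphi(s,x_N(s)) - \varphi(s,x(s))\big]\,ds,
\end{equation*}
and, using the change of variables $s = \lambda_N(u)$ on the first term, compare
\begin{equation*}
\int_0^\infty \varphi(s,x_N(s))\,ds = \int_0^\infty \varphi(\lambda_N(u), x_N(\lambda_N(u)))\,\lambda_N'(u)\,du
\end{equation*}
against $\int_0^\infty \varphi(u,x(u))\,du$. (If one prefers to avoid $\lambda_N'$, replace the time-change argument by the observation that the integrand is supported in $[0,T]$ and use that $\lambda_N\to\mathrm{id}$ uniformly on $[0,T]$ together with uniform continuity of $\varphi$ on the compact set $[0,T+1]\times K$, where $K$ is a compact neighborhood of $\{x(s):s\le T\}$ together with all $x_N(\lambda_N(s))$ for large $N$.)

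The key steps are then: (i) fix $\varepsilon>0$; choose $T$ large (a continuity point of $x$, outside the exceptional countable set) so that $\mathrm{supp}\,\varphi\subset[0,T]\times E$; (ii) since $\varphi$ is continuous with compact support, it is uniformly continuous, and since $x$ restricted to $[0,T]$ has relatively compact range (being càdlàg on a compact interval), the sets $\{x_N(\lambda_N(s)):s\le T,\ N\ge N_0\}\cup\{x(s):s\le T\}$ lie in a fixed compact $K\subset E$ for $N_0$ large; (iii) on $[0,T]\times K$ use uniform continuity of $\varphi$ together with $\sup_{s\le T}\mathrm{d}_E(x_N(\lambda_N(s)),x(s))\to 0$ and $\sup_{s\le T}|\lambda_N(s)-s|\to 0$ to bound the difference of integrands uniformly in $s$; (iv) integrate over $[0,T]$ (a set of finite measure) to conclude $|I(\varphi,a)(x_N)-I(\varphi,a)(x)|\to 0$.

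The main obstacle is the mismatch between $J_1$-convergence, which only gives uniform closeness \emph{after} an allowable time reparametrization $\lambda_N$, and the integral $\int_0^\infty\varphi(s,x(s))\,ds$, which is sensitive to the actual time variable. The resolution is that $\lambda_N\to\mathrm{id}$ uniformly on compacts, so the distortion introduced by $\lambda_N$ is itself controlled and can be absorbed into the uniform-continuity estimate for $\varphi$ (one pays an error of the form $\omega_\varphi(\sup_{s\le T}|\lambda_N(s)-s|)$, where $\omega_\varphi$ is the modulus of continuity of $\varphi$, which vanishes). A minor technical care point is the choice of $T$: one must pick $T$ so that $x_N\to x$ uniformly (after reparametrization) on $[0,T]$, which excludes at most countably many values of $T$, and this is harmless since $\mathrm{supp}\,\varphi$ is compact so any sufficiently large admissible $T$ works. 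Local compactness of $E$ is used only to ensure the relevant range sets are contained in a compact $K$, which makes the uniform-continuity argument for $\varphi$ applicable on $[0,T]\times K$.
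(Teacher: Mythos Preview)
Your proposal is essentially workable but takes a considerably more laborious route than the paper, and the execution leaves a gap that you do not quite close. The paper's proof is a two-line application of dominated convergence: since $J_1$-convergence implies $x_N(s)\to x(s)$ at every continuity point $s$ of $x$, and a c\`adl\`ag function has at most countably many discontinuities, one gets $\varphi(s,x_N(s))\to\varphi(s,x(s))$ for Lebesgue-a.e.\ $s\in[0,T]$; the integrand is dominated by $\|\varphi\|_\infty\mathbf{1}_{[0,T]}$, and DCT finishes. No time-changes, no compact-range arguments, no local compactness of $E$ is invoked.

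Your approach via the reparametrizations $\lambda_N$ is valid in spirit, but your steps (iii)--(iv) only show that $\int_0^T\varphi(\lambda_N(s),x_N(\lambda_N(s)))\,ds$ is close to $\int_0^T\varphi(s,x(s))\,ds$; they do not directly control $\int_0^T\varphi(s,x_N(s))\,ds$. Bridging the two requires the change of variables $s=\lambda_N(u)$, which introduces the Stieltjes measure $d\lambda_N$ (not $\lambda_N'(u)\,du$, since $\lambda_N$ need not be absolutely continuous). Your ``alternative'' paragraph does not actually avoid this: the modulus-of-continuity bound $\omega_\varphi(\sup_s|\lambda_N(s)-s|)$ controls the shift in the \emph{first} argument of $\varphi$, but the integral you need still has $x_N(s)$ rather than $x_N(\lambda_N(s))$ in the second slot, and $s\mapsto x_N(s)$ is only c\`adl\`ag, not uniformly continuous. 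One clean way to close the gap is to note that $\psi_N(s):=\varphi(s,x_N(s))$ is uniformly close to $\psi(\lambda_N^{-1}(s))$ where $\psi(u):=\varphi(u,x(u))$, and then observe that $\psi(\lambda_N^{-1}(s))\to\psi(s)$ a.e.\ (since $\psi$ is c\`adl\`ag) and apply DCT --- but at that point you are using exactly the paper's idea anyway.
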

        \begin{proof}
            We need to show that for any sequence $x_N$ in $\mathcal{D}\lp{\R_+, E}$ converging to a path $x$ therein that $I(\varphi, a)(x_N)$ converges to $I(\varphi, a)(x)$.
    
            As $\varphi$ has compact support, there exists a finite $T$ in $\R_+$ such that $supp(\varphi) \subset [0,T] \times E$. Therefore
            \begin{equation*}
                I(\varphi, a)(x_N) = a + \int_0^T \varphi(s, x_N(s)) ds.
            \end{equation*}
            As $x$ has at most countably many points of discontinuity \cite[Lemma 5.1]{ethier2009markov} and $\lim_{N \to \infty} x_N(s) = x(s)$ for all continuity points of $s$ of $x$, we have that $x_N$ converges to $x$ almost surely on $[0,T]$. Therefore, by continuity $\varphi(s,x_N(s))$ converges to $\varphi(s,x(s))$ almost surely on $[0,T]$. As $|\varphi(s, x_N(s))| \leq \norm{\varphi}_{\infty} \mathbbm{1}_{[0,T]}(s)$, the sequence of functions $\varphi(s,x_N(s))$ is dominated by an integrable function. The dominated convergence theorem therefore gives the convergence.
        \end{proof}
        
        We now define the collection $\mathcal{I}:=\{I(\varphi, a):\,\varphi\in C_{c}\lp{\R_+ \times E},\,a\in \R\}$. Note that $I(\varphi, a) \subset C_b(\mathcal{D}\lp{\R_+, E})$ by Lemma \ref{L:cts}.

        To apply a Stone-Weierstrass, we begin by showing that $\mathcal{I}$ separates points in $\mathcal{D}\lp{\R_+, E}$. That is, for any $x \neq y$ there exists $\varphi, a$ such that $I(\varphi,a)(x) \neq I(\varphi, a)(y)$.
        
        \begin{lemma}\label{L: separating_points}
            For any locally compact Polish space $E$, $\mathcal{I}$ separates points in $\mathcal{D}\lp{\R_+, E}$.
        \end{lemma}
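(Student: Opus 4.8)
The plan is to reduce the separation property to a single pointwise disagreement of the two paths, and then to build a product test function $\varphi(s,e)=\chi(s)\psi(e)$ that detects this disagreement through the occupation integral $\int_0^\infty\varphi(s,\cdot(s))\,ds$. Since $a$ enters $I(\varphi,a)$ only as an additive constant, it cancels when one compares $I(\varphi,a)(x)$ with $I(\varphi,a)(y)$; hence it suffices, given $x\neq y$ in $\mathcal{D}\lp{\R_+, E}$, to produce $\varphi\in C_{c}\lp{\R_+\times E}$ with $\int_0^\infty\varphi(s,x(s))\,ds\neq\int_0^\infty\varphi(s,y(s))\,ds$.

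First I would fix $t_0\ge 0$ with $x(t_0)\neq y(t_0)$, which exists because $x$ and $y$ differ as functions. Since $E$ is metric, choose disjoint open sets $U\ni x(t_0)$ and $W\ni y(t_0)$. Using local compactness of $E$ together with the Urysohn lemma for locally compact Hausdorff spaces, pick $\psi\in C_{c}(E)$ with $0\le\psi\le 1$, $\psi(x(t_0))=1$, and $\operatorname{supp}\psi\subset U$; by continuity $\psi>\tfrac{1}{2}$ on some open $U'\ni x(t_0)$, and $U'\subset\{\psi>0\}\subset\operatorname{supp}\psi\subset U$, so in particular $W\cap\operatorname{supp}\psi=\emptyset$.

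Next I would use the right-continuity of $x$ and $y$ at $t_0$ to find $\delta>0$ such that $x(s)\in U'$ and $y(s)\in W$ for all $s\in[t_0,t_0+\delta)$. Then choose a nonnegative $\chi\in C_{c}(\R_+)$ with $\operatorname{supp}\chi\subset[t_0,t_0+\delta)$ (for instance a bump supported in $[t_0,t_0+\delta/2]$) and $\int_0^\infty\chi\,ds>0$, and set $\varphi(s,e):=\chi(s)\psi(e)$. This $\varphi$ lies in $C_{c}\lp{\R_+\times E}$ since $\operatorname{supp}\varphi\subset\operatorname{supp}\chi\times\operatorname{supp}\psi$ is compact. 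On $\operatorname{supp}\chi$ one has $x(s)\in U'$, hence $\psi(x(s))>\tfrac{1}{2}$ and $\int_0^\infty\varphi(s,x(s))\,ds\ge\tfrac{1}{2}\int_0^\infty\chi\,ds>0$, whereas $y(s)\in W$ forces $\psi(y(s))=0$ and $\int_0^\infty\varphi(s,y(s))\,ds=0$. Thus $I(\varphi,0)(x)\neq I(\varphi,0)(y)$, which is the desired separation and the input needed for the Stone--Weierstrass argument in Lemma~\ref{L: density}.

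The one genuinely delicate point is aligning the topology of $E$ with the one-sided regularity of the paths: the time window must be placed to the right of $t_0$ so that right-continuity alone pins down the values of $x$ and $y$ there — a symmetric window would require two-sided continuity, which the paths lack at jump times — and both $\chi$ and $\psi$ must be kept compactly supported so that their product remains in $C_c$, which is exactly where local compactness of $E$ is used and cannot be relaxed to mere Polishness.
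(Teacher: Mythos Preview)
Your proof is correct and follows the same product-bump strategy $\varphi(s,e)=\chi(s)\psi(e)$ as the paper, built from local compactness and Urysohn. The one point of divergence is the handling of the time window: the paper first selects $t_0$ to be a \emph{continuity point} of both $x$ and $y$ (such a point exists because c\`adl\`ag paths have only countably many discontinuities, and right-continuity propagates $x(t_0)\neq y(t_0)$ onto a whole interval), which then permits a symmetric window $(t_0-\delta,t_0+\delta)$. You instead take an arbitrary disagreement point and rely solely on right-continuity to obtain a one-sided window $[t_0,t_0+\delta)$. Your route is marginally more direct, avoiding the auxiliary continuity-point argument; the paper's route shows that your closing caveat --- that a symmetric window would fail at jump times --- is not actually an obstruction once $t_0$ is chosen away from jumps.
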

        \begin{proof}
            If $x \neq y$, then there is a continuity point $t_0 \in \R_+$ of both $x$ and $y$ such that either $x(t_0) \neq y(t_0)$. Because $E$ is locally compact and Hausdorff, there exist disjoint open neighborhoods with compact closure $U$ and $V$ of $x(t_0)$ and $y(t_0)$, respectively. Choose $\theta$ in $C_c(E, [0,1])$ with $\theta_{|U} \equiv 1$ and $\theta_{|V} \equiv 0$, which exists by Urysohn's lemma.
    
            As $t_0$ is a continuity point, there is a $\delta > 0$ such that $x(s) \in U$ and $y(s) \in V$ for all $|s-t_0| < \delta$. Let $\psi \in C_c(\R_+, [0,1])$ satisfy $\psi_{|[t_0-\frac{1}{2}\delta, t_0 + \frac{1}{2}\delta]} \equiv 1$ and $supp(\psi) \subset [t_0-\delta, t_0+\delta]$. 
            
            Define $\varphi$ by
            \begin{equation*}
                \varphi(s,e) := \psi(s) \theta(e).
            \end{equation*}
            $\varphi$ clearly belongs to $C_c(\R_+ \times E)$. Further
            \begin{align*}
                I(\varphi, 0)(x) > 0 = I(\varphi, 0)(y),
            \end{align*}
            so $I(\varphi, 0)$ separates $x$ and $y$. Thus $\mathcal{I}$ separates points of $\mathcal{D}\lp{\R_+, E}$.
        \end{proof}

        We now define
        $\bar{\mathcal{I}}$ to be the collection of all finite sums of finite products of elements of $\mathcal{I}$. That is, each element $I$ in $\bar{\mathcal{I}}$ may be written as
        \begin{equation*}
            \sum_{i = 1}^n \prod_{j=1}^{k_i} I_{ij},
        \end{equation*}
        where each $I_{ij}$ belongs to $\mathcal{I}$ and $k_i, n \in \Z_+$. An extension of this class of continuous, bounded integral functionals will, by an application of Stone-Weierstrass \cite[Chapter 7, p.~11]{Ok_AppliedPointSetTopology}, be dense in $C_b(\mathcal{D}\lp{\R_+, E})$.
    
        \begin{lemma}\label{L: density}
             Suppose that $E$ is locally compact and Polish. Then $\bar{\mathcal{I}}$ is dense in $C_b(\mathcal{D}\lp{\R_+, E})$ in the topology of uniform convergence on compacta.
        \end{lemma}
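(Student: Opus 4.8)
The plan is to reduce the statement to the classical real Stone--Weierstrass theorem applied separately on each compact subset of $\mathcal{D}\lp{\R_+, E}$, since the seminorms defining uniform convergence on compacta are indexed by such sets. First I would fix an arbitrary compact $\mathcal{K}\subseteq\mathcal{D}\lp{\R_+, E}$; as $\mathcal{D}\lp{\R_+, E}$ is metrizable, $\mathcal{K}$ is a compact metric (hence compact Hausdorff) space, so $C(\mathcal{K})$ with the supremum norm is a Banach space and the real Stone--Weierstrass theorem \cite{Ok_AppliedPointSetTopology} applies to subalgebras of $C(\mathcal{K})$. Restricting the elements of $\bar{\mathcal{I}}$ to $\mathcal{K}$ gives a family $\bar{\mathcal{I}}|_{\mathcal{K}}\subseteq C(\mathcal{K})$ (the restrictions are continuous and bounded by Lemma~\ref{L:cts}), and I would check that $\bar{\mathcal{I}}|_{\mathcal{K}}$ is an $\R$-subalgebra of $C(\mathcal{K})$: it is closed under addition and under multiplication directly from the definition of $\bar{\mathcal{I}}$ as finite sums of finite products of elements of $\mathcal{I}$, and it is closed under scalar multiplication because $c\,I(\varphi,a)=I(c\varphi,ca)\in\mathcal{I}$ for every $c\in\R$.

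Next I would verify the two hypotheses of Stone--Weierstrass for $\bar{\mathcal{I}}|_{\mathcal{K}}$. It contains the constant functions, since taking $\varphi\equiv0$ yields $I(0,a)=a$ for every $a\in\R$. It separates points of $\mathcal{K}$: given distinct $x,y\in\mathcal{K}$, Lemma~\ref{L: separating_points} produces $\varphi\in C_c(\R_+\times E)$ with $I(\varphi,0)(x)\neq I(\varphi,0)(y)$, and $I(\varphi,0)\in\mathcal{I}\subseteq\bar{\mathcal{I}}$. Therefore Stone--Weierstrass gives that $\bar{\mathcal{I}}|_{\mathcal{K}}$ is dense in $C(\mathcal{K})$ in the sup norm.

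Finally I would unwind the definition of the target topology. Let $f\in C_b\lp{\mathcal{D}\lp{\R_+, E}}$, let $\mathcal{K}$ be compact, and let $\varepsilon>0$. Since $f|_{\mathcal{K}}\in C(\mathcal{K})$, the previous step yields $g\in\bar{\mathcal{I}}$ with $\sup_{x\in\mathcal{K}}|f(x)-g(x)|<\varepsilon$. As $\mathcal{K}$ and $\varepsilon$ were arbitrary, this is exactly the assertion that $\bar{\mathcal{I}}$ is dense in $C_b\lp{\mathcal{D}\lp{\R_+, E}}$ for the topology of uniform convergence on compacta.

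The one point that requires care, rather than a genuine obstacle, is that $\mathcal{D}\lp{\R_+, E}$ is itself typically not locally compact, so one cannot apply Stone--Weierstrass globally; passing to compact subsets is precisely what circumvents this. The local-compactness hypothesis on $E$ is not used in this lemma directly but upstream, in Lemmas~\ref{L:cts} and~\ref{L: separating_points}, where it ensures that $\mathcal{D}\lp{\R_+, E}$ is Polish and that sufficiently many compactly supported test functions $\varphi$ on $\R_+\times E$ exist (via Urysohn's lemma). No further estimates are needed.
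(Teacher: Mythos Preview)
Your proof is correct and follows essentially the same route as the paper's own argument: restrict $\bar{\mathcal{I}}$ to an arbitrary compact $\mathcal{K}\subseteq\mathcal{D}(\R_+,E)$, verify it is a subalgebra of $C(\mathcal{K})$ containing the constants and separating points (the latter via Lemma~\ref{L: separating_points}), and apply Stone--Weierstrass on $\mathcal{K}$. Your additional remark about why Stone--Weierstrass cannot be applied globally, and where the local-compactness hypothesis on $E$ is actually used, is a nice clarification that the paper does not make explicit.
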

        \begin{proof}
            To show that $\bar{\mathcal{I}}$ is dense in $C_b(\mathcal{D}\lp{\R_+, E})$ in the topology of uniform convergence on compacta, we need to show that for any function $f$ in $C_b(\mathcal{D}\lp{\R_+, E})$, any $\varepsilon > 0$, and any compact set $K$ there exists a function $I$ in $\bar{\mathcal{I}}$ such that
            \begin{equation*}
                \sup_{e \in K} |f(e) - I(e)| < \varepsilon.
            \end{equation*}
        
            We begin by observing that $\bar{\mathcal{I}}$ contains constants; take $\varphi \equiv 0$ and $a$ to be free. $\bar{\mathcal{I}}$ also is closed under addition, multiplication, and scalar multiplication, which is immediate. By Lemma~\ref{L: separating_points} $\mathcal{I}$, and so $\bar{\mathcal{I}}$, separates points.
    
            Fix any compact set $K \subset \mathcal{D}\lp{\R_+, E}$ and any $f\in C_b(\mathcal{D}\lp{\R_+, E})$. The restriction of elements of $\bar{\mathcal{I}}$ to $K$ yields an algebra $\bar{\mathcal{I}}(K)$ of continuous functions from $K$ to $\mathbb{R}$ that contains the constants and separates points. By the Stone-Weierstrass theorem for compact Hausdorff spaces \cite[Chapter 7, p.~11]{Ok_AppliedPointSetTopology}, the uniform closure of $\bar{\mathcal{I}}(K)$ is all of $C(K)$. This gives the claim.
            
        \end{proof}

    \subsection{A characterization of weak convergence in distribution}
        We show by means of a density argument that it suffices to check, for weak convergence in distribution of random measures, joint convergence in distribution when testing against integral functionals, as described by the following theorem. 

        \begin{thm}\label{T: random_measure_integral_functionals}
            Let $\lp{\mu_N}_{N \in \N}$ and $\mu$ be random measures  taking value in $\mathcal{M}_1\lp{\mathcal{D}\lp{\R_+, E}}$, where $E$ is a locally compact Polish space. Then 
            $\mu_N \towd \mu$
            if and only if for any finite collection $\{\varphi_i\}_{i=1}^k$ of elements in $C_c\lp{\R_+ \times E}$ that
            \begin{equation}\label{E: integral_functional_convergence}
                \lp{\mu_N\lp{\int_{\R_+} \varphi_i(s, x(s))ds}}_{1 \leq i \leq k}\toL \lp{\mu\lp{\int_{\R_+} \varphi_i(s, x(s))ds}}_{1 \leq i \leq k}.
            \end{equation}
        \end{thm}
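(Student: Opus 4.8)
The plan is to prove the two implications separately: the forward one is essentially formal, and the backward one rests on the density statement of Lemma~\ref{L: density}.

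\textbf{The ``only if'' direction.} By Lemma~\ref{L:cts}, each functional $I(\varphi_i,0)\colon x\mapsto\int_{\R_+}\varphi_i(s,x(s))\,ds$ lies in $C_b\lp{\mathcal{D}\lp{\R_+,E}}$, so $\nu\mapsto\lp{\nu\lp{I(\varphi_i,0)}}_{1\leq i\leq k}$ is a weakly continuous map $\mathcal{M}_1\lp{\mathcal{D}\lp{\R_+,E}}\to\R^k$. Since the test class in Definition~\ref{D: weak_convergence} is closed under finite linear combinations, the Cram\'er--Wold device promotes the scalar statements $\mu_N(f)\toL\mu(f)$ to the joint convergence \eqref{E: integral_functional_convergence}; equivalently, one applies the continuous mapping theorem directly to $\mu_N\towd\mu$.

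\textbf{The ``if'' direction.} I would run a Prokhorov-type argument, upgrading \eqref{E: integral_functional_convergence} through the hierarchy $\mathcal{I}\subset\bar{\mathcal{I}}\subset C_b\lp{\mathcal{D}\lp{\R_+,E}}$ of Lemmas~\ref{L: separating_points}--\ref{L: density}. Step~1: establish tightness of the laws of $(\mu_N)_N$ in $\mathcal{P}\lp{\mathcal{M}_1\lp{\mathcal{D}\lp{\R_+,E}}}$; because $\mathcal{D}\lp{\R_+,E}$ is Polish this reduces to exhibiting, for each $\varepsilon>0$, compacta $\mathcal{K}_j\subset\mathcal{D}\lp{\R_+,E}$ with $\inf_N\PP\lp{\mu_N(\mathcal{K}_j^c)\leq 1/j\ \text{for all}\ j}>1-\varepsilon$, for which I would test \eqref{E: integral_functional_convergence} against $\varphi$'s that mollify the indicators cutting out the standard modulus-of-continuity compact sets of $\mathcal{D}\lp{\R_+,E}$ (the relative-compactness criterion of \cite{ethier2009markov}). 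Step~2: upgrade \eqref{E: integral_functional_convergence} to the joint convergence $\lp{\mu_N(g_j)}_{j}\toL\lp{\mu(g_j)}_{j}$ for every finite family $g_1,\dots,g_m\in\bar{\mathcal{I}}$. Step~3: by Step~1 and Prokhorov, along any subsequence extract a further subsequence with $\mu_{N'}\Rightarrow\mu_\infty$ in $\mathcal{M}_1\lp{\mathcal{D}\lp{\R_+,E}}$; Step~2 together with the continuous mapping theorem then gives that $\lp{\mu_\infty(g)}_{g\in\bar{\mathcal{I}}}$ and $\lp{\mu(g)}_{g\in\bar{\mathcal{I}}}$ have the same finite-dimensional laws, and since $\bar{\mathcal{I}}$ is an algebra of bounded continuous functions separating the points of the Polish space $\mathcal{D}\lp{\R_+,E}$ (Lemmas~\ref{L: separating_points}, \ref{L: density}) it is measure-determining there, so the joint laws of $\lp{\nu(g)}_{g\in\bar{\mathcal{I}}}$ pin down the law of a random element $\nu$ of $\mathcal{M}_1\lp{\mathcal{D}\lp{\R_+,E}}$. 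Hence $\mu_\infty\overset{d}{=}\mu$; as the subsequential limit is always $\mu$, $\mu_N\Rightarrow\mu$ in $\mathcal{M}_1\lp{\mathcal{D}\lp{\R_+,E}}$, which in particular yields $\mu_N\towd\mu$ in the sense of Definition~\ref{D: weak_convergence}.

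\textbf{Main obstacle.} The crux is Step~2 -- passing from the \emph{linear} family $\mathcal{I}=\lcb{I(\varphi,a)}$ of single-time integral functionals to the \emph{multiplicative} family $\bar{\mathcal{I}}$ of its finite sums of finite products. This is not formal, since $\mu_N\lp{I_1 I_2}\neq\mu_N(I_1)\,\mu_N(I_2)$, so convergence against products of functionals is not a consequence of \eqref{E: integral_functional_convergence} alone. I would attack it by rewriting a product $\prod_{j=1}^{p}I(\varphi_j,0)$, via Fubini, as the single ``$p$-time'' functional $x\mapsto\int_{\R_+^{p}}\prod_{j=1}^{p}\varphi_j(s_j,x(s_j))\,ds_1\cdots ds_p$ attached to the tensor $\bigotimes_j\varphi_j\in C_c\lp{\R_+^{p}\times E^{p}}$, checking as in Lemma~\ref{L:cts} that all such multi-time functionals are bounded and $J_1$-continuous, and then showing that \eqref{E: integral_functional_convergence}, applied simultaneously to the finitely many single-time functionals produced by a fine time-grid mollification of these tensors, together with the uniform bounds on the $\varphi_j$ and the compacta from Step~1, suffices to control the joint laws of $\mu_N$ against every finite family of multi-time functionals; this is the genuinely technical point, after which Steps~1 and~3 are routine appeals to Prokhorov's theorem and to Stone--Weierstrass via Lemma~\ref{L: density}.
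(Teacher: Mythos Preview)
Your ``only if'' direction coincides with the paper's. For the ``if'' direction, the paper takes a much shorter, direct route with no tightness step and no Prokhorov extraction: it (i) invokes Lemma~\ref{L: density} to reduce to $f\in\bar{\mathcal I}$, (ii) writes $f=\sum_i\prod_j I_{ij}$ and $\mu_N(f)=\sum_i\mu_N\bigl(\prod_j I_{ij}\bigr)$ by linearity, and (iii) asserts that joint convergence in distribution of the vector $\bigl(\mu_N(I_{ij})\bigr)_{i,j}$, supplied by the hypothesis, together with continuity of polynomials and the continuous mapping theorem, gives $\mu_N(f)\toL\mu(f)$. Your instinct that step (iii)---passing from $\mathcal I$ to its multiplicative closure $\bar{\mathcal I}$---is the delicate point is well placed, since $\mu_N\bigl(\prod_j I_{ij}\bigr)$ is not literally a polynomial in the $\mu_N(I_{ij})$; the paper treats it in one line rather than via your multi-time-functional detour. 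What the paper's route buys is brevity; what yours would buy, if completed, is a more standard weak-convergence framework in which the identification of limits is cleanly separated from convergence.

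Your Step~1, however, has its own gap. Functionals $x\mapsto\int\varphi(s,x(s))\,ds$ with $\varphi\in C_c(\R_+\times E)$ are continuous even in the much weaker Meyer--Zheng pseudo-path topology and are insensitive to the $J_1$ modulus of continuity; there is thus no evident way to approximate indicators of Skorokhod-compact sets (which encode modulus-of-continuity constraints) by such functionals, and hence no clear mechanism by which \eqref{E: integral_functional_convergence} alone can deliver tightness of the laws of $(\mu_N)$ in $\mathcal M_1\bigl(\mathcal D(\R_+,E)\bigr)$. The paper's argument sidesteps this difficulty entirely by never invoking tightness.
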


        \begin{proof}
        We first prove sufficiency of \eqref{E: integral_functional_convergence}.
            From the "if part" of \eqref{iff}, it suffices to show that
            \begin{equation}\label{E: weak_convergence}
                \mu_N(f) \toL \mu(f) \quad \text{for } f\in C_b\lp{\mathcal{D}\lp{\R_+, E}}.
            \end{equation}
            By Lemma~\ref{L: density} it suffices, without loss of generality, to demonstrate \eqref{E: weak_convergence} for $f$ in $\bar{\mathcal{I}}$. For any element $f$ in $\bar{\mathcal{I}}$ there is an array of elements $\lp{I_{ij}}_{i\in [n], j \in [k_i]}$ of $\mathcal{I}$ such that
            \begin{equation*}
                f = \sum_{i=1}^n \prod_{j=1}^{k_i}I_{ij}.
            \end{equation*}
            Linearity of expectation yields
            \begin{equation}\label{E: decomposition_I}
                \mu_N(f) = \sum_{i=1}^n \mu_N\lp{\prod_{j=1}^{k_i} I_{ij}}.
            \end{equation}
            By assumption, $\mu_N(\lp{I_{ij}}_{i \in [n], j \in [k_i]})$ converges in distribution to $\mu(\lp{I_{ij}}_{i \in [n], j \in [k_i]})$. As polynomials are continuous and continuity preserves convergence in distribution, \eqref{E: decomposition_I} converges in distribution to
            \begin{equation*}
                \sum_{i=1}^n \mu\lp{\prod_{j=1}^{k_i} I_{ij}} = \mu(f),
            \end{equation*}
            which gives \eqref{E: weak_convergence}.

            We now prove the necessity of Equation~\eqref{E: integral_functional_convergence}. By Lemma~\ref{L:cts} the integral functionals are all continuous. The continuous mapping theorem then gives the claim.
            
        \end{proof}

        The characterization of weak convergence in distributions of random measures on $\mathcal{D}\lp{\R_+, E}$ for locally compact and Polish $E$ given by Theorem~\ref{T: random_measure_integral_functionals} allows one to give classical-like characterizations of such convergence, as typified by Theorem \ref{T: weak_convergence_of_random_measures} below.

        \begin{definition}[Weak convergence in finite-dimensional distribution]\label{D: weak_FDD}
            Let $\lp{\mu_N}_{N \in \N}$ and $\mu$ be random measures taking value in $\mathcal{M}_1\lp{\mathcal{D}\lp{\R_+, E}}$, where $E$ is a locally compact Polish space. We say that the sequence $\mu_N$ converges weakly in finite-dimensional distribution to $\mu$ if 
            \begin{equation*}
                \lp{\pi_{\vec{t}}}_*\mu_N \towd \lp{\pi_{\vec{t}}}_* \mu \quad \text{in} \quad \mathcal{M}_1(\R^k)
            \end{equation*}            
            for any finite collection of times $\lp{t_i}_{1 \leq i \leq k}$, for any $k\geq 1$, where $\pi_{\vec{t}}$ is the evaluation map
            \begin{align*}
                \pi_{\vec{t}} : \mathcal{D}\lp{\R_+,E} &\to E^k\\
                x &\mapsto \lp{x(t_i)}_{1 \leq i \leq k}
            \end{align*}
           and     $\lp{\pi_{\vec{t}}}_* \mu$ is the pushforward of $\mu$ by $\pi_{\vec{t}}$. 

        \end{definition}

        We now define an analogue of the compact containment condition for random measures on Skorokhod space \cite[p. 129]{ethier2009markov}.
    
        \begin{definition}[Compact containment condition for random measures]\label{D: CCC_random_measures}
            Let $\{\mu_N\}$ denote a sequence of random measures on $\mathcal{D}\lp{\R_+, E}$. Then $\{\mu_N\}$ satisfies the compact containment condition if, for every $\varepsilon > 0$ and $T > 0$ that there is a compact set $K(\varepsilon, T) \subset E$ such that
            \begin{equation*}
                \PP\lp{\liminf_{N \to \infty} \mu_N\lp{x([0,T]) \subset K(\varepsilon, T) } \geq 1-\varepsilon}  = 1.
            \end{equation*}
        \end{definition}

        We obtain the following characterization of weak convergence in law
        of random measures in $\mathcal{D}\lp{\R_+, E}$.
        
        \begin{thm}\label{T: weak_convergence_of_random_measures}
            Let $E$ be a locally compact Polish space. Let $\mu$ and $\{\mu_N\}_{N \in \N}$ be random measures taking value in $\mathcal{M}_1\lp{\mathcal{D}\lp{\R_+, E}}$. Suppose the followings hold:
            \begin{itemize}
                \item[(i)] $\{\mu_N\}$ converges weakly in finite-dimensional distribution to $\mu$ as in Definition~\ref{D: weak_FDD}, and
                \item[(ii)] $\{\mu_N\}$ satisfies the compact containment condition as in Definition~\ref{D: CCC_random_measures}.
            \end{itemize}
            Then $\mu_N \towd \mu$.
        \end{thm}

        \begin{proof}
            By Theorem~\ref{T: random_measure_integral_functionals} it suffices to show that, for any finite collection $\lp{\varphi_i}_{1 \leq i \leq k}$ of elements of $C_c\lp{\R_+, E}$ that
            \begin{equation*}
                \lp{\mu_N\lp{\int_0^{\infty} \varphi_i(s,x(s))ds}}_{1 \leq i \leq k} \toL \lp{\mu\lp{\int_0^{\infty} \varphi_i(s,x(s))ds}}_{1 \leq i \leq k}.
            \end{equation*}
            To this end, let $F_i$ be defined by
            \begin{equation*}
                F_i := \int_0^{\infty} \varphi_i(s,x(s))ds.
            \end{equation*}
            Fix $G$ in $C_b(\R^k)$. We need to show that
            \begin{equation*}
                \mathbb{E}\lb{G\lp{\mu_N(F_1),\ldots \mu_N(F_k)}} \to \mathbb{E}\lb{G\lp{\mu(F_1),\ldots \mu(F_k)}}.
            \end{equation*}
            We will proceed by approximating $F_i$ by Riemann sums.
            
            As each $\varphi_i$ is compactly supported, there exists $T > 0$ such that $\cup_i supp\lp{\varphi_i} \subset [0,T] \times E$. By the compact containment condition, for any $\eta, \rho > 0$ there exists $K(\eta, T)$ a compact set in $E$ such that
            \begin{equation}\label{E: CCC_convergence_proof}
                \PP\lp{ \mu_N\lp{x([0,T]) \subset K(\eta, T)} \geq 1-\eta} \geq 1 - \rho
            \end{equation}
            for all sufficiently large $N$.
            
            Since $\varphi_i$ is continuous on $[0,T] \times K(\eta, T)$ for any compact $K \subset E$, it is uniformly continuous there. Fix $\varepsilon > 0$. By uniform continuity, for any $\varepsilon > 0$ we can choose $\delta>0$ so that
            \begin{equation*}
                |\varphi_i(s, e) - \varphi_i(t, e)| < \varepsilon
            \end{equation*}
            for every $i = 1, \ldots, k$ and any $|s-t| < \delta$. Choose a fine partition $0 = t_0 < t_1 < \ldots < t_r = T$ so that $\min_i\{t_{i+1} - t_i\} < \delta$ and define the discrete Riemann sum $R_i^{\varepsilon}$ by
            \begin{equation*}
                R_i^{\varepsilon} := \sum_{j=0}^{r-1} \lp{t_{j+1} - t_j} \varphi_i(t_j, x(t_j)).
            \end{equation*}

            For ease of notation we write $\mu_N(F)$ for the tuple
            \begin{equation*}
                \lp{\mu_N(F_1), \ldots, \mu_N(F_k)}
            \end{equation*}
            in $\R^k$, and similarly for $\mu_N(R^{\varepsilon})$, $\mu(F)$, and $\mu(R^{\varepsilon})$.
            By the triangle inequality we have that
            \begin{align}\label{E: three_epsilon}
               & \la{\mathbb{E}\lb{G(\mu_N(F)) - G(\mu(F))}} \\
                \leq &\mathbb{E}\lb{\la{G(\mu_N(F)) - G(\mu_N(R^{\varepsilon}))}}
                + 
                \la{\mathbb{E}\lb{G(\mu_N(R^{\varepsilon})) - G(\mu(R^{\varepsilon}))}}
                + 
                \mathbb{E}\lb{\la{G(\mu(R^{\varepsilon})) - G(\mu(F))}}.
            \end{align}

            We first show that the second summand in \eqref{E: three_epsilon} can be made arbitrarily small. Recall that $\{t_i\}_{i=0}^r$ are the points of our mesh for the Riemann sums $R^{\varepsilon}$ with $t_0 = 0$ and $t_r = T$. Define the map $H_i: E^r \to \R$ by
            \begin{equation*}
                H_i(e_0, \ldots, e_{r}) := \sum_{j=0}^{r-1} (t_{j+1}-t_j)\varphi_i(t_j, e_j).
            \end{equation*}
            Then $R_i^{\varepsilon} = H_i \circ \pi_{\vec{t}}$. As $\mu_N$ converges weakly in finite-dimensional distribution by assumption (i), $\mu_N(R^\varepsilon)$ converges in distribution to $\mu(R^{\varepsilon})$. Therefore the second summand in \eqref{E: three_epsilon} can be made arbitrarily small..

            We now show the first summand can be made arbitrarily small. As the $F_i$ and $R_i^{\varepsilon}$ are uniformly bounded by $M :=T\norm{\varphi_i}_{\infty}$ we have $\mu_N(F), \mu_N(R^\varepsilon) \in [-M,M]^k$ almost surely. Since $G$ is uniformly continuous on $[-M,M]^k$ we have for every $\delta > 0$ that there exists $\gamma > 0$ such that
            \begin{equation*}
                |G(u) - G(v)| < \delta \text{\, when \,} \max_{1 \leq i \leq k} |u_i - v_i| < \gamma.
            \end{equation*}
            By uniform continuity of $G$ on $[-M,M]^k$ we therefore have that
            \begin{equation}\label{E: G_closeness}
                \la{G(\mu_N(F)) - F(\mu_N(R^{\varepsilon}))}
                \leq \delta + 2\norm{G}_{\infty} \mathbf{1}_{\{\max_i|\mu_N(F_i) - \mu_N(R^{\varepsilon})| \geq \gamma \}}.
            \end{equation}
            Taking expectations over \eqref{E: G_closeness} yields
            \begin{equation}\label{E: first_summand_bound}
                \mathbb{E}\la{G(\mu_N(F)) - F(\mu_N(R^{\varepsilon}))}
                \leq \delta + 2\norm{G}_{\infty}\PP\lp{\max_i \la{\mu_N(F_i) - \mu_N(R_i^\varepsilon)} \geq \gamma}.
            \end{equation}
            As we can take $\delta$ arbitrarily small, it will suffice to show that the probability term can be made arbitrarily small.

            To show the probability term can be made arbitrarily small, we utilize the compact containment condition. Fix $\eta, \rho > 0$. By assumption (ii), there exists a compact set $K(\eta, T)$ such that, for the event $A_N := \{\mu_N(x([0,T]) \subset K(\eta,T)) \geq 1-\eta\}$
            \begin{equation*}
                \PP\lp{A_N} \geq 1-\rho
            \end{equation*}
            for all sufficiently large $N$. On $A_N$ we therefore have
            \begin{equation*}
                \la{\mu_N(F_i) - \mu_N(R_i^\varepsilon)} \leq T\varepsilon + 2T\norm{\varphi_i}_{\infty} \eta.
            \end{equation*}
            As $\eta$ and $\varepsilon$ we can take arbitrarily small, we can take them so that
            \begin{equation*}
                T\varepsilon + 2T\norm{\varphi_i}_{\infty} \eta < \gamma
            \end{equation*}
            for all $i = 1,\ldots, k.$ Therefore
            \begin{equation}\label{E: prob_bound}
                \PP\lp{\max_i \la{\mu_N(F_i) - \mu_N(R_i^{\varepsilon})} \geq \gamma} \leq \PP\lp{A_N^c} \leq \rho
            \end{equation}
            for all sufficiently large $N$. Combining \eqref{E: prob_bound} with \eqref{E: first_summand_bound} gives
            \begin{equation*}
                \mathbb{E}\la{G(\mu_N(F)) - F(\mu_N(R^{\varepsilon}))}
                \leq \delta + 2 \norm{G}_\infty \rho.
            \end{equation*}
            Taking $\delta$ and $\rho$ arbitrarily small then makes the first summand of \eqref{E: three_epsilon} arbitrarily small.

            The third summand can be made arbitrarily small by the same argument as for the first summand, giving the result.

        \end{proof}

        When $E$ is compact, we immediately have the following corollary.

        \begin{corollary}\label{C: fdd_compact_case}
            Suppose that $E$ is a compact metric space and $\mu$ is a random measure on $\mathcal{D}\lp{\R_+, E}$. Let $\mu_N$ denote a sequence of random measures on $\mathcal{D}\lp{R_+, E}$ such that, for any finite collection of times $\lp{t_i}_{1 \leq i \leq k}$ such that $\lp{\pi_{\vec{t}}}_* \mu_N \towd \lp{\pi_{\vec{t}}}_* \mu$. Then $\mu_N \towd \mu$.
        \end{corollary}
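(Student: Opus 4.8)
The plan is to obtain this as an immediate consequence of Theorem~\ref{T: weak_convergence_of_random_measures}. First I would note that a compact metric space is in particular a locally compact Polish space, so that theorem applies to $E$. Its hypotheses are (i) convergence of the finite-dimensional distributions $\lp{\pi_{\vec t}}_*\mu_N \towd \lp{\pi_{\vec t}}_*\mu$, which is exactly what is assumed here, and (ii) the compact containment condition of Definition~\ref{D: CCC_random_measures}. The key observation is that (ii) is vacuous when $E$ is compact: choosing $K(\varepsilon,T)=E$ for every $\varepsilon>0$ and $T>0$, every path $x\in\mathcal D\lp{\R_+,E}$ satisfies $x([0,T])\subset E$, so $\mu_N\lp{x([0,T])\subset K(\varepsilon,T)}=1$ identically in $N$, and hence $\PP\lp{\limsup_{N}\mu_N\lp{x([0,T])\subset K(\varepsilon,T)}\ge 1-\varepsilon}=1$ trivially. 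With both hypotheses verified, Theorem~\ref{T: weak_convergence_of_random_measures} yields $\mu_N\towd\mu$.

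There is no genuine obstacle here; the corollary merely records that the compact containment step can be skipped when the state space is already compact. If one preferred a self-contained argument rather than citing Theorem~\ref{T: weak_convergence_of_random_measures}, I would instead invoke Theorem~\ref{T: random_measure_integral_functionals} directly: for $\varphi_i\in C_c\lp{\R_+\times E}$ with $\bigcup_i \mathrm{supp}(\varphi_i)\subset[0,T]\times E$, each $\varphi_i$ is uniformly continuous on $[0,T]\times E$ by compactness, so $\int_0^T\varphi_i(s,x(s))\,ds$ is approximated uniformly over all of $\mathcal D\lp{\R_+,E}$---with no exceptional null set---by the Riemann sums $\sum_j (t_{j+1}-t_j)\varphi_i(t_j,x(t_j))$; the assumed finite-dimensional convergence lets me pass to the limit in these sums for a fixed mesh, and letting the mesh tend to $0$ upgrades this to joint convergence of the integral functionals, which by Theorem~\ref{T: random_measure_integral_functionals} is equivalent to $\mu_N\towd\mu$. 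This is precisely the proof of Theorem~\ref{T: weak_convergence_of_random_measures} with the compact containment argument deleted.
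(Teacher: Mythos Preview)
Your proposal is correct and matches the paper's approach exactly: the paper states this corollary as an immediate consequence of Theorem~\ref{T: weak_convergence_of_random_measures} in the compact case, and your observation that the compact containment condition is vacuous when one takes $K(\varepsilon,T)=E$ is precisely the intended justification.
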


        \begin{remark}\rm
            Corollary~\ref{C: fdd_compact_case} allows us to strengthen some already existing results, including \cite[Theorem 3.8]{abfw25} which demonstrates weak convergence in finite-dimensional distribution. By Corollary~\ref{C: fdd_compact_case}, weak convergence in law therefore follows from compactness of $\mathcal{E}_n$.            
        \end{remark}

\section{Proof of Theorem~\ref{T: quenched_limited_outcrossing}}\label{S: main_result_proof}

    We will relate subgraphs of an EFC process $\Pi$ to what we call a $Q$-$\lambda$ graph. Here $Q$ corresponds to the generator of a $\Xi$-coalescent, $\lambda$ refers to the fragmentation rate of the particles. In Section~\ref{S: Q_lambda_construction} we describe $Q$-$\lambda$ graphs as particle systems where an initial sample of size $n$, each particle fragments into two particles independently at rate $\lambda$, and where particles coalesce as a $\Xi$-coalescent. In Section~\ref{SS: discrete_time_graph} we show that the subgraph of the pedigree given by following all possible trajectories of a sample of $n$ lineages converges, with the $c_N^{-1}$ time-rescaling to a $Q$-$\lambda$ graph. In Section~\ref{S: quenched_critical_proof} we show that the coalescent of coalescing random walks on the discrete-time ancestral graph converges to that of a $Q$-$\lambda$ graph. By realizing the $Q$-$\lambda$ graph as a subgraph of an EFC process via a coupling argument, we finish the proof of Theorem~\ref{T: quenched_limited_outcrossing}.

    \subsection{Construction of the \texorpdfstring{$Q$-$\lambda$}{TEXT} graph}\label{S: Q_lambda_construction}

        Fix $\lambda \in \R_+$  and a sequence of consistent matrices $\lp{Q_n}_{n \in \N}$, and write $Q_n = \lp{q_{\xi\eta}^n}_{\xi,\eta \in \mathcal
        E_n}$. We assume they are
        obtained from Assumptions \ref{A: timescale} and \ref{A:Q_Nn} respectively. We define a stochastic process, called a a $Q$-$\lambda$ ancestral graph, with state space 
        \begin{equation}\label{Def:StateSpaceOmega}
            \Omega:=\Z_+ \times \Z_+ \times \mathcal{E},
        \end{equation}
        where $\mathcal{E} = \cup_{n \geq 2} \En$.
        For each $(l,m,\xi) \in \Omega$, the integer $l$ will track the current number of lineages, $m$ the label of the particle that most recently fragmented, and $\xi$ the structure of the most recent coalescence event. 
        
        This process is a random graph in which each node in the graph fragments independently at rate $\lambda$, and where any subset of $r$ nodes, represented by the singleton $\xi_0^r=\{\{i\}\}_{i=1}^r$, coagulate according to $\eta \in\mathcal{E}_r$ with rate $q_{\xi_0^r \eta}^r$. We provide the formal definition below.
        \begin{definition}\label{D: Q_lambda_graph}
            A \textbf{$Q$-$\lambda$ ancestral graph} is a continuous-time Markov process with sample paths in $\mathcal{D}\lp{\R_+, \Omega}$ and infinitesimal generator
            $\mathfrak{L}$ acting on $f\in C_b(\Omega)$ by
            \begin{equation}\label{E: graph_generator}
                \mathfrak{L}f(l,m,\xi)
                =
                \sum_{j=1}^l \lambda \lb{f(l+1, j, \xi)-f(l,m,\xi)} +  \sum_{\eta \in \mathcal{E}_l \setminus \{\xi_0^l\}} q_{\xi_0^l \eta}^l \lb{f(|\eta|, m, \eta) - f(l,m,\xi)}
            \end{equation}
        For $n\geq 1$, we denote by $G_{Q,\lambda}^n=\big(G_{Q,\lambda}^n(t)\big)_{t\in\R_+}$ a $Q$-$\lambda$ ancestral graph starting with $n$ nodes, i.e. when $G_{Q,\lambda}^n(0)=(n,m, \xi)$ for some $(m,\xi)\in \Z_+ \times \cup_{n \geq 2} \En$.
        \end{definition}

Existence of  $G_{Q,\lambda}^n$ holds because the fragmentation rate is linear in the first coordinate and does not depend on the the other two coordinates. This can be verified by, for instance, \cite[Proposition 2.9 in Chap. 4]{ethier2009markov}.

Next we describe $n$ independent random walks $\{x_i\}_{1 \leq i \leq n}$ on the ancestral graph $G_{Q,\lambda}^n$, which we view as $\Z_+$-valued processes. The joint process $\lp{G_{Q,\lambda}^n, x_1, \ldots, x_n}$ may be described by a generator $\tilde{\mathfrak{L}}$ defined on $C_b\lp{\Omega \times \Z_+^n}$. Define a vector $\sigma: \Z_+^n \times \Z_+ \to \{0,1\}^n$ by 
        \begin{equation*}
            \sigma(x,j)_i := \mathbbm{1}_{\{x_i = j\}}.
        \end{equation*}
        The vector $\sigma(x,j)$ in the $i$th position is $1$ if $x_i = j$ and $0$ otherwise. This gives a compact representation of all the walks that currently are in position $j$ on the graph. 
        
        Define another vector $\rho: \Z_+^n \times \mathcal{E} \to \Z_+^n$  where $\rho(x,\xi)_i$ is equal to the index of the block to which the $i$th particle belongs. To make this canonical, we may suppose that $\xi = \{C_1, C_2, \ldots, C_b\}$ is ordered by its least element. That is, for each $r < s$ we have that
        \begin{equation*}
            \inf C_r < \inf C_s.
        \end{equation*}
        $\rho(x,\xi)_i$ is therefore equal to the index of the unique block $C_{\rho(x,\xi)_i} = [x_i]_{\xi}$ of $\xi$, where $[x_i]_{\xi}$ is the equivalence class of $x_i$ with respect to $\xi$.

        \begin{definition}[Coalescing random walks on the ancestral graph]\label{D: random_walks_on_graph}
            The $n$ $\Z_+$-valued random walks $\{x_i\}_{1 \leq i \leq n}$ on the $Q$-$\lambda$ ancestral graph $G_{Q,\lambda}^n$ have their joint distribution with $G_{Q,\lambda}^n$ described by the generator $\tilde{\mathfrak{L}}$. They satisfy that
            \begin{enumerate}
                \item $x_i(0) = i$ for all $1 \leq i \leq n$,
                \item $x_i(t) \leq l(t)$ for all $1 \leq i \leq n$ and all $t \in \R_+$,
                \item at any fragmentation event, each of the $x_i$ will become $l+1$ or remain unchanged with equal (i.e. $\frac{1}{2}$) probability,
                \item once any two of the $n$ processes agree, they remain identical for all time thereafter.
            \end{enumerate}
        \end{definition}

        This system of coalescing random walks on the $Q$-$\lambda$ graph gives rise to a unique (in law) process with sample paths in $\mathcal{D}\lp{\R_+, \En}$, which we denote by $\chi^n_G$, when we keep track of the indices of these random walks. 
        \begin{definition}\label{D: coalescent_on_Q_lambda}
            We denote the law of this process $\chi^n_G$, conditional on $G_{Q, \lambda}^n$, by
            \begin{equation*}
                \mathcal{L}_{Q,\lambda}^n := \PP\lp{\chi^n_G \in \cdot \,\mid\, G_{Q,\lambda}^n}.
            \end{equation*}
        \end{definition}

    \subsection{Convergence of the discrete-time ancestral graph}\label{SS: discrete_time_graph}

        We begin by giving a construction of the discrete-time ancestral graph $G^{N,n} = \lp{l^N, m^N, \xi^N}$ as a discrete-time process taking values in $\Omega$.
        
        We will define $G^{N,n}$ in terms of another process, $\tilde{G}^{N,n}$ which is the subgraph of the population pedigree $\mathcal{G}_N$ given by all possible trajectories of the sample lineages $\lp{\hat{X}_i^N}_{1 \leq i \leq n}$, tracing all possible ancestral individuals of the sample as vertices of this subgraph.
        Let $\tilde{V}_k^N\subset[N]$ be the set of all possible ancestral individuals of the sample at time-step $k$ in the past.
        
        That is, $\tilde{G}^{N,n}$ consists of a sequence of  edges $\lp{\tilde{E}_k^N}$ where the edges from $\tilde{V}_k^N$ to $\tilde{V}_{k+1}^N$ consist of those edges from individual to itself for persisting in that time-step (i.e. not dying) and also edges from parents to offspring for reproductive events in this time-step. The edge set $\tilde{E}_k^N$ is merely the edges of the pedigree connecting those elements of $\tilde{V}_k^N$.
        
        We recall that $n$ is the sample size, and assume the initial conditions is $G^{N,n}(0) = \lp{n, 1, \xi_0^n}$. For $k\geq 1$, we define $G^{N,n}(k) = \lp{l^N(k), m^N(k), \xi^N(k)}$ as follows:
\begin{itemize}
    \item $l^N(k) := |\tilde{V}_k^N|$ is the number of distinct nodes in the subgraph $\tilde G^{N,n}$ at the $k$th time-step,
    \item $m^N(k)$ is the index of the node 
    in $\tilde G$ the subject of most recent outcrossing event, i.e. 
        \[
        m^N(k) = 
        \begin{dcases}
            m^N(k-1), \qquad \qquad  \qquad \qquad  \text{ if there are no outcrossing events in the $k$th time-step}\\
            \inf\left\{v \in \{1,2,\ldots, l^N(k)\} :\, \text{there exist }  (v,\pi), (v, \pi') \in \tilde{E}_k^N \text{ with } \pi \neq \pi' \right\}, \quad \text{ otherwise}
        \end{dcases}.
        \]
        \begin{remark}\rm
            Note that while it is possible in our model that more than one outcrossing event occurs in each time-step, this will occur with negligible probability over the lifespan of the graph. This is shown in Lemma~\ref{L: ancestral_graph_transition_rates}.
        \end{remark}
    \item         Label the vertices $\tilde{V}_k^N = \{\tilde{v}_i\}_{1 \leq i \leq l^N(k)}$ in order.
        We then define $\xi^N(k)$ as the unique partition in $\mathcal{E}_{l^N(k)}$ defined by the transitive closure of the binary relation $R^N(k)$
        \begin{equation*}
            i \sim_{R^N(K)} j \text{ if } \tilde{v}_i^N \text{  is the offspring of } \tilde{v}_j^N \text{  or vice-versa}.
        \end{equation*}
\end{itemize}
 Figure~\ref{F: ancestral_graph_fig} shows a realization of the ancestral graph $G^{N,n}$ for a sample of size $n = 2$.
 
        \begin{figure}
            \centering
            \includegraphics[width=0.5\linewidth]{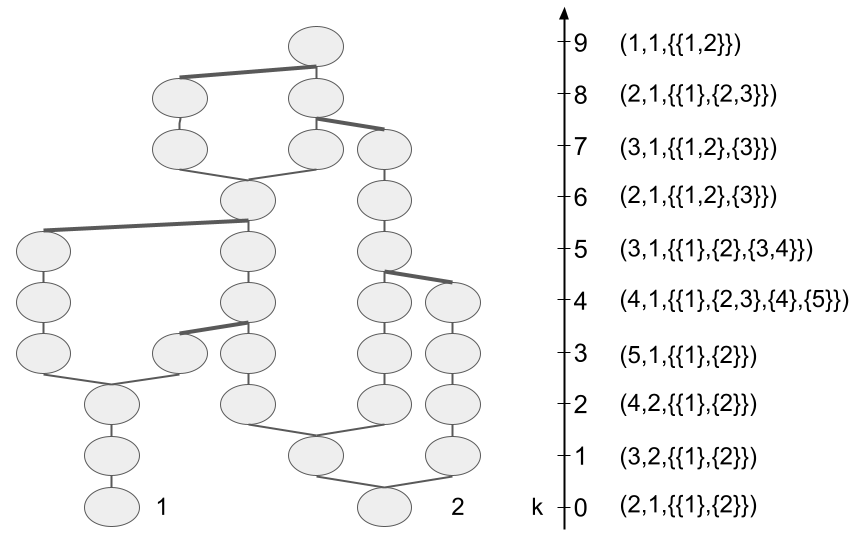}
            \caption{Here we see the discrete-time ancestral graph $G^{N,2}$ for a sample of size $n = 2$. Between the $0$-th and the $1$-st time-step $m$ changes because the second node, read left to right, fragments. Between the third and fourth time-step the second and third nodes coalesce, while the remaining three nodes are uninvolved in the coalescence.}
            \label{F: ancestral_graph_fig}
        \end{figure}
    
        \begin{remark}\rm\label{R: trivialization}
            Observe that if all of the reproductive events are via selfing then, then $R^N(k) = \xi^N(k)$. Notice that this is not the case with probability $O(1-\alpha_N)$. Further, we can compare the probability of this configuration of occurring with the transition matrix $Q_l^N.$ Indeed, for each $\eta \in \En$ the rate at which a coalescence with the structure $\eta$ occurs when there are $l$ extant sample lineages is precisely the sum of all the rates at which $l$ sample lineages in $l$ distinct individuals overlap into individuals with the structure $\eta$. That is, the rate at which $l$ sample lineages coagulate like $\eta$ is precisely the sum of all the rates at which the $l$ sample lineages in $l$ distinct individuals enter a state whose image under the haploid map is $\eta$, i.e. $q^{N,l}_{\xi_0^l\eta}$.
        \end{remark}

        Assume that the demography of the population in the general model of Section~\ref{A:Q_Nn}. We let $Q^{N} = \lp{Q^N_n}_{n \in \Z_+}$ denote the finite $N$ generator described therein, i.e.
        \begin{equation}\label{Def:QNn}
            Q^N_n = \lp{q_{\xi\eta}^{N,n}}_{\xi, \eta \in \En} = c_N^{-1}\lp{h_{\xi\eta}^{N,n} - \delta_{\xi\eta}}_{\xi,\eta \in \En},
        \end{equation}
        where $\delta_{\xi\eta}$ is the Kronecker delta function.
        We calculate the one-step transition probabilities for $G^{N,n}$ explicitly in terms of $Q^N_n$ in the following lemma.

        \begin{lemma}\label{L: ancestral_graph_transition_rates}
            Suppose that Assumption \ref{A: timescale} holds.
            Then the one-step transition probabilities of the Markov chain $\{G^{N,n}(k)\}_{k\in\Z_+}$ satisfy the following asymptotic property: given that $G^{N,n}(k) = (l,m,\xi)\in \Omega$, it holds that
            \begin{equation}\label{E: graph_transitions}
            G^{N,n}(k+1) =
                \begin{cases}
                    (l,m,\xi) &, \text{ w/prob. } 1-l^2O(c_N)\\
                    (l+1, m', \xi) &, \text{ w/prob. } d_N + l^2 o(c_N)\\
                    (l', m', \xi) &, \text{ w/prob. } l^2 o(c_N)\\
                    (l'', m', \eta) &, \text{ w/prob. }  l^3 O(d_N c_N)\\
                    (|\eta|, m, \eta) &, \text{ w/prob. } c_Nq^{N,l}_{\xi_0^l \eta} \alpha_N^l
                \end{cases},
            \end{equation}           
            for each $(m',l',l'', \eta)$ such that $m' \neq m$ and $l' \neq  l+1$, $l'' \neq |\eta|$, and $\eta \in \mathcal{E}_l$. In the above, the error terms $O(c_N),o(c_N),O(d_N c_N)$ are all uniform for $(l,m,\xi)\in \Omega$.
        \end{lemma}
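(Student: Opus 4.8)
The plan is to compute the one-step conditional law of $G^{N,n}$ directly, by conditioning on the reproduction event at the corresponding time-step and reading off how it acts on the finite set of currently-extant ancestral nodes. Fix a state $(l,m,\xi)\in\Theta$; by exchangeability of $\mathbf{V}$ we may relabel individuals so that the $l$ nodes $\tilde V_k^N$ are $[l]=\{1,\dots,l\}$. The transition of $G^{N,n}$ from step $k$ to step $k+1$ is then a deterministic function of which elements of $[l]$ are children in that step, and, for each such child, of its type (selfed or outcrossed) and its parent(s). Only three things can change the state: a \emph{fragmentation} (a node in $[l]$ is an outcrossed child, which creates a second node and advances the ``most recent outcrossing'' label to some $m'\neq m$), a \emph{coalescence} (two nodes in $[l]$ acquire a common parent, or a selfed child among them has its parent also in $[l]$, so that their ancestral individuals coincide), or both at once. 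The key per-event estimates are: $\PP(i\text{ is an outcrossed child})=\mathbb{E}[(1-\alpha_N)K_N/N]=d_N$ for each $i\in[l]$, which is \eqref{E: d_N_Def}, and, for each pair $\{i,j\}\subset[l]$, $\PP(\text{$i$ and $j$ coalesce in one step})=O(c_N)$, the latter following from the same polynomial-in-$\{\tilde{V}_i\}$ bookkeeping that produced \eqref{E:cN_general}. Because children are sampled without replacement, the events $\{i\text{ is a child}\}$ are negatively associated, so $\PP(\ge 2\text{ outcrossed children among }[l])\le\binom{l}{2} d_N^2$; since $d_Nc_N^{-1}\to\lambda$ and $c_N\to0$ (Assumptions~\ref{A: timescale} and \ref{A:c_N}) this is $l^2 o(c_N)$, the bound in the third line of \eqref{E: graph_transitions}. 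Likewise $\PP(\text{a fragmentation and a coalescence both occur})\le \PP(\ge1\text{ outcross})\cdot O(l^2 c_N)\le (l d_N)\cdot O(l^2 c_N)=l^3 O(d_N c_N)$, the bound in the fourth line.

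For the two remaining lines I would isolate the ``no-outcrossing'' event, whose complement has probability at most $l d_N=lO(c_N)$. On this event each node $v_i$ has a unique step-$(k+1)$ ancestor $w_i$, so the graph's coalescence structure is exactly the partition $\{i\sim j\iff w_i=w_j\}$, and by Remark~\ref{R: trivialization} this equals the haploid-collapsed one-step transition: for $l$ lineages initially in $l$ distinct individuals, ``same ancestral individual after one step'' is precisely how the haploid map $F_{\rm hap}$ merges blocks. Hence for $\eta\in\mathcal{E}_l$ with $\eta\neq\xi_0^l$,
\begin{equation*}
\PP\big(G^{N,n}(k+1)=(|\eta|,m,\eta)\,\big|\,G^{N,n}(k)=(l,m,\xi)\big)
= \PP\big(F_{\rm hap}(\chi^{N,l}(1))=\eta,\ \text{no node of }[l]\text{ outcrosses}\big),
\end{equation*}
and by the definition \eqref{Def:QNn} of $Q^N_l$ the unrestricted probability $\PP(F_{\rm hap}(\chi^{N,l}(1))=\eta)$ equals $c_N q_{\xi_0^l\eta}^{N,l}$. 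Restricting to the no-outcrossing event costs, to leading order, the factor $\alpha_N^l$ -- the weight that all of the (at most $l$) lineages entering the merger do so through carry-overs or \emph{selfed} children rather than outcrossed ones -- while the residual between $c_N q_{\xi_0^l\eta}^{N,l}\alpha_N^l$ and the true value, produced by the ``one newborn'' and ``two newborns'' pieces of \eqref{E:cN_general} weighted by an extra factor $1-\alpha_N$, is of order $l^3 O(d_N c_N)$; this yields the fifth line. Finally, the single-fragmentation line $(l+1,m',\xi)$ is, for each $m'$, the event ``node $m'$ outcrosses and nothing else happens'', of probability $d_N(1-lO(c_N))=d_N+l^2 o(c_N)$; and the ``nothing changes'' line $(l,m,\xi)$ has probability $1$ minus the sum of all the others, which is $l^2 O(c_N)$ because it is dominated by the total merger probability $\sum_{\eta\neq\xi_0^l}c_N q_{\xi_0^l\eta}^{N,l}=l^2 O(c_N)$ together with the total fragmentation probability $l O(d_N)$.

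All the $O$ and $o$ constants produced this way depend only on the distribution of $(K_N,\mathbf{V})$ and not on $(l,m,\xi)$, since every estimate comes from the single-node rate $d_N$, the single-pair rate $O(c_N)$, and union bounds over the at most $l$ nodes and $\binom{l}{2}$ pairs; this gives the uniformity claim. I expect the main obstacle to be the second paragraph: one has to verify that the graph's purely combinatorial notion of coalescence -- defined through common parents of \emph{all} possible ancestral trajectories, with no Mendelian layer and with an outcrossed child genuinely splitting into two nodes -- agrees on the no-outcrossing event with the Mendelian, gene-level transition rates $c_N q_{\xi_0^l\eta}^{N,l}$ that enter $Q^N_l$, and that the discrepancy on the complementary event is captured by the factor $\alpha_N^l$ up to the stated $l^3 O(d_N c_N)$ error; this is where Remark~\ref{R: trivialization} and the explicit polynomial structure of $c_N$ in \eqref{E:cN_general} do the work.
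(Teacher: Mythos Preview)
Your approach is essentially the same as the paper's: decompose the one-step transition according to whether any of the $l$ nodes is an outcrossed child and whether any pair acquires a common parent, then bound each piece via the per-node rate $d_N$ and per-pair rate $O(c_N)$. One small slip: the events $\{i\text{ is a child}\}$ are negatively associated \emph{conditionally on $K_N$}, but not unconditionally, so your bound $\PP(\ge 2\text{ outcrossed among }[l])\le\binom{l}{2}d_N^2$ is not justified as stated. The paper instead uses $\PP(i,j\text{ both outcrossed children})=(1-\alpha_N)^2\,\mathbb{E}[K(K-1)/(N(N-1))]\le(1-\alpha_N)d_N$, which follows from $K\le N$ and gives $o(d_N)\subset l^2 o(c_N)$ directly via $\alpha_N\to1$. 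This fixes the third line with no change to the rest of your argument. Your treatment of the fifth line is in fact more careful than the paper's, which simply asserts the equality $p_{\xi_0^l\eta}^{N,l}\alpha_N^l$ without isolating the residual; you correctly identify this as the delicate step.
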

        \begin{remark}\rm
            Note that a sequence $\{a_N\}$ belongs to $O(b_N)$ if there exists a constant $C$ such that $a_N \leq C b_N$ for all $N\geq 1$. A sequence $\{a_N\}$ belongs to $o(b_N)$ if $\lim_{N \to \infty} \frac{a_N}{b_N} = 0$. Note that $O(b_N)$ and $o(b_N)$ consist of collections of real-valued sequences and so set-valued inclusions between $O(a_N)$ and $o(b_N)$ make sense. We write $O(b_N) \subset o(c_N)$ if $\lim_{N \to \infty} \frac{a_N}{c_N} = 0$ for any sequence $\{a_N\}\in O(b_N)$.
        \end{remark}
            
        \begin{proof}
            Observe that the probability of either an outcrossing event or a coalescent event among the $l$ nodes is bounded above by $2c_N\binom{l}{2} + ld_N$. By Assumption~\ref{A: timescale}, the probability of neither an outcrossing event nor a coalescent event is $1-l^2O(c_N)$. This gives the first line.

            Now we show that the probability that two outcrossing events occur in the same time-step occurs with negligible probability, implying line 3. Indeed, the probability that two outcrossing events occur is bounded by the probability that any of the $\binom{l}{2}$ possible pairs of particles are both offspring and both are the result of outcrossing. The probability that any such pair consists of two offspring is $\mathbb{E}\lb{\frac{K_N(K_N-1)}{N(N-1)}}$. The probability that they both are the result of outcrossing is $(1-\alpha_N)^2$. Therefore the probability of at least two outcrossing events is at most
            \begin{equation*}
                \binom{l}{2}(1-\alpha_N)^2 \mathbb{E}\lb{\frac{K_N(K_N-1)}{N(N-1)}}
                \leq \binom{l}{2}d_N (1-\alpha_N).
            \end{equation*}
This, together with the fact that $\alpha_N\to 1$ (by Assumption~\ref{A: timescale}), implies that the probability of at least two outcrossing events is bounded by $\binom{l}{2} o(d_N)$. This gives the third line of \eqref{E: graph_transitions}.

            For any $\eta \in \En$ not equal to $\xi_0^l$, the probability in one time-step that there is any coalescence of the $l$ nodes in the graph with structure $\eta$ and without outcrossing is 
            $$p_{\xi_0^l \eta}^{N,l} \alpha_N^l = c_Nq_{\xi_0^{l} \eta}^{N,l} \alpha_N^l,$$
            which gives the fifth line of \eqref{E: graph_transitions}.

            We now show that an outcrossing event occurs during a coalescence with negligible probability, implying the fourth line. 
            As $\xi_0^l$ consists of particles in $l$ distinct individuals, the coalescence rate is independent to the selfing rate. In particular, the probability that we have a coalescence with structure $\eta$ and an outcrossing event is therefore $O((1-\alpha_N)q^{N,l}_{\xi\eta}) \subset o(c_N)$. This gives the fourth line  of \eqref{E: graph_transitions}.
        \end{proof}

        Equipped with Lemma~\ref{L: ancestral_graph_transition_rates} giving the transition rates of the discrete-time ancestral graph $G^{N,n}$, we are prepared to demonstrated convergence of the sped-up ancestral graph to a $Q$-$\lambda$ graph.

        \begin{lemma}\label{L: graph_convergence}
            Suppose that Assumptions~\ref{A:c_N}, \ref{A: timescale}, and \ref{A:Q_Nn} hold.  Then the sped-up discrete-time ancestral graph $\bar{G}^N = \lp{G^N\lp{\lfloor t c_N^{-1}\rfloor}}_{t \in \R_+}$ converges weakly in the $J_1$ topology in $\mathcal{D}\lp{\R_+, \Omega}$ to a $Q$-$\lambda$ ancestral graph $G^n_{Q,\lambda}$ with respect to the measures $\PP_{\xi_0^n}^N$, where $Q = \lp{Q_n}_{n \in \N}$ is as in Assumption~\ref{A:Q_Nn}.
        \end{lemma}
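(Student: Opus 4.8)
The plan is to prove Lemma~\ref{L: graph_convergence} by a generator-convergence argument, localized so as to control the number of lineages. Write $P_N$ for the one-step transition kernel of the Markov chain $\{G^{N,n}(k)\}_{k\in\Z_+}$ on $\Theta$, and let $\mathcal{A}_N:=c_N^{-1}(P_N-I)$ be the pre-generator of the sped-up chain $\bar G^N$. Fix a bounded function $f$ of finite support on $\Theta$ (such functions form a convenient core for $\mathscr{L}$, since the limiting martingale problem is well-posed, as noted after Definition~\ref{D: Q_lambda_graph}). Applying the transition probabilities of Lemma~\ref{L: ancestral_graph_transition_rates} and summing line~2 over the at most $l$ possible updated labels $m'$, the fragmentation contribution to $\mathcal{A}_N f(l,m,\xi)$ is $c_N^{-1}\,l\,d_N$ times a bounded increment plus $l^2 o(1)$; by Assumption~\ref{A: timescale}, $c_N^{-1}d_N\to\lambda$, so this converges to $\sum_{j=1}^{l}\lambda\,[f(l+1,j,\xi)-f(l,m,\xi)]$. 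The coalescence contribution comes from line~5 and equals $\sum_{\eta\in\mathcal{E}_l\setminus\{\xi_0^l\}} q^{N,l}_{\xi_0^l\eta}\,\alpha_N^{\,l}\,[f(|\eta|,m,\eta)-f(l,m,\xi)]$, which converges to $\sum_{\eta\in\mathcal{E}_l\setminus\{\xi_0^l\}} q^l_{\xi_0^l\eta}\,[f(|\eta|,m,\eta)-f(l,m,\xi)]$ because $\alpha_N\to1$ and $q^{N,l}_{\xi_0^l\eta}\to q^l_{\xi_0^l\eta}$ by Assumption~\ref{A:Q_Nn}; the fact that every coalescence event in the ancestral graph is ``started from singletons'' $\xi_0^l$, which makes the rate matrix $Q^N_l$ the relevant one, is exactly Remark~\ref{R: trivialization}. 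The remaining transitions of Lemma~\ref{L: ancestral_graph_transition_rates} (multiple simultaneous fragmentations, and simultaneous fragmentation-and-coalescence) contribute $l^3 O(c_N^{-1}d_N c_N)+l^2 o(1)=o(1)$, using $d_N=O(c_N)$. Hence $\mathcal{A}_N f(l,m,\xi)\to\mathscr{L}f(l,m,\xi)$ with $\mathscr{L}$ as in \eqref{E: graph_generator}, and, by the uniformity of the error terms in Lemma~\ref{L: ancestral_graph_transition_rates}, uniformly on each set $\{l\le L\}$.

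Since $\mathscr{L}$ is unbounded on $C_b(\Theta)$ — the fragmentation rate grows linearly in $l$ — I would pass through a truncation. For $L\ge n$ put $\Theta_L=\{(l,m,\xi):\,l\le L,\ m\le L\}$, a finite set, and let $\tau_L^N$ and $\tau_L$ be the first exit times of $\bar G^N$ and of $G^n_{Q,\lambda}$ from $\Theta_L$. On the stopped processes both generators are bounded operators on the finite space $\Theta_L$, the convergence $\mathcal{A}_N f\to\mathscr{L}f$ is uniform there, and the standard convergence theorem for Markov chains approximating a Feller process on a finite state space (see \cite[Ch.~1 and Ch.~4]{ethier2009markov}) gives $\bar G^N(\cdot\wedge\tau_L^N)\Rightarrow G^n_{Q,\lambda}(\cdot\wedge\tau_L)$ in $\mathcal{D}(\R_+,\Theta_L)$ with the $J_1$ topology, for each fixed $L$; the initial conditions agree deterministically, $\bar G^N(0)=(n,1,\xi_0^n)=G^n_{Q,\lambda}(0)$, so there is no issue there.

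To remove the truncation uniformly in $N$, I would dominate the lineage count. By Lemma~\ref{L: ancestral_graph_transition_rates}, in one time-step each of the (at most $l$) nodes fragments with probability at most $d_N+o(c_N)$, and no other transition increases $l$ except by a negligible amount; hence, after the $c_N^{-1}$ time change and for $N$ large, $l^N(\lfloor t c_N^{-1}\rfloor)$ is stochastically dominated by a pure-birth (Yule) process started at $n$ with per-particle rate $\lambda+\varepsilon$. Such a process has mean $n e^{(\lambda+\varepsilon)t}$, so a maximal inequality yields $\PP\big(\sup_{t\le T} l^N(\lfloor t c_N^{-1}\rfloor)>L\big)\le n e^{(\lambda+\varepsilon)T}/L$ for all large $N$, and similarly $\PP(\tau_L\le T)\to0$ as $L\to\infty$; thus $\PP(\tau_L^N\le T)$ and $\PP(\tau_L\le T)$ are uniformly small, so the stopped processes agree with the unstopped ones on $[0,T]$ with probability tending to one. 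Letting $L\to\infty$ after $N\to\infty$ upgrades the weak convergence of the stopped processes to $\bar G^N\Rightarrow G^n_{Q,\lambda}$ in $\mathcal{D}(\R_+,\Theta)$ with the $J_1$ topology. The main obstacle is precisely this interplay: the error terms of Lemma~\ref{L: ancestral_graph_transition_rates} carry the factors $l^2$ and $l^3$, so they are negligible only on bounded-$l$ regions, which is what forces the truncation and the uniform Yule domination to be carried out together; everything else is routine.
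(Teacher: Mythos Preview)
Your proposal is correct and follows the same overarching strategy as the paper: show convergence of the rescaled discrete generator $c_N^{-1}(P_N-I)$ to $\mathscr{L}$ using the one-step estimates of Lemma~\ref{L: ancestral_graph_transition_rates}, and then deal with the unboundedness of $\mathscr{L}$ (linear fragmentation rate) by a compact-containment argument for the first coordinate $l$. Where you and the paper part ways is in the implementation of that second step. The paper writes the martingale problem for the size process $L_t$, obtains the drift bound $\mathcal{Q}(\mathrm{Id})(n)\le \lambda n$ and the quadratic-variation bound $\langle M^{\mathrm{Id}}\rangle_t \le \lambda\int_0^t L_s\,ds$, compares with the linear SDE $dX_t=\lambda X_{t-}dt+dM^{\mathrm{Id}}_t$, and finishes with Doob's maximal inequality; it then asserts that the same chain of inequalities carries over to the pre-limit $l^N$. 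You instead localise to the finite set $\Theta_L$, get convergence of the stopped processes by the bounded-generator theorem, and remove the stopping by a stochastic domination of $l^N$ by a Yule process with per-particle rate $\lambda+\varepsilon$. Your route is a bit more elementary and avoids the martingale/SDE machinery; the paper's route is closer to the standard Ethier--Kurtz template (Theorem~1.6.6 plus Corollary~4.8.9). One small point to tighten: by Lemma~\ref{L: ancestral_graph_transition_rates} the per-node fragmentation probability is $d_N+l^2\,o(c_N)$, not $d_N+o(c_N)$, and line~3 allows $l$ to jump by more than one in a single step; so the Yule domination should be phrased for the stopped process $l^N(\cdot\wedge\tau_L^N)$ (where the polynomial-in-$l$ errors are harmless for $N\ge N_0(L)$), rather than for the unstopped process. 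You effectively acknowledge this in your final sentence, so the argument closes.
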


        \begin{proof}

            We proceed as in the proof of \cite[Lemma 7.6]{NFW25}.
            Let $T^{(N)}$ denote the linear operator on the space $C_b\lp{\Omega}$ of bounded continuous functions on $\Omega$ defined by $T^{(N)}f(l,m,\xi) = \mathbb{E}\lb{f\lp{G^N(1)} \mid G^{N,n}(0) = (l,m,\xi)}$. The generator $\mathfrak{L}^N$ of the discrete-time process $G^{N,n}$ is given by
            \begin{equation*}
                \mathfrak{L}^N f(l,m,\xi) = \lp{T^{(N)}-I}f(l,m,\xi),
            \end{equation*}
            which can be calculated to sufficient accuracy as in Equation~\ref{E: graph_transitions}. Let $\mathfrak{L}$ be the infinitesimal generator of the continuous-time process $G_{Q,\lambda}^n := \lp{G_{Q,\lambda}^n(t)}_{t \in \R_+}$. That is, by Equation \eqref{E: graph_generator},
            \begin{equation}\label{E: generator_difference}
                \mathfrak{L}f(l,m,\xi) =
                \sum_{j=1}^l \lambda \lb{f(l+1, j, \xi)-f(l,m,\xi)} +  \sum_{\eta \in \mathcal{E}_l \setminus \{\xi_0^l\}} \tilde{q}_{\xi_0^l \eta}^l \lb{f(|\eta|, m, \eta) - f(l,m,\xi)}
            \end{equation}
            It follows from Equation \eqref{E: graph_transitions} that for all $f \in C_b(\Omega)$ with finite support that,
            \begin{equation*}
                \sup_{(l,m,\xi) \in \Omega} \la{c_N^{-1} \mathfrak{L}^Nf(l,m,\xi) - \mathfrak{L}f(l,m,\xi)} \to 0 \text{ as $N \to \infty$.}
            \end{equation*}
            Let $\{T(t)\}_{t \in \R_+}$ be the semigroup on $C_b(\Omega)$ of $G_{Q,\lambda}^n$. By \cite[Theorem 6.6, Chapter 1]{ethier2009markov} and \eqref{E: generator_difference}, it holds that
            \begin{equation}\label{E: semigroup_difference}
                \lim_{N \to \infty} \sup_{0 \leq t \leq T} \sup_{(l,m,\xi) \in \Omega}
                \la{\lp{T^{(N)}}^{\lfloor t c_N^{-1} \rfloor}f(l,m,\xi) - T(t) f(l,m,\xi)} = 0
            \end{equation}
            for all $f$ in the domain of $\mathfrak{L}$
            
            We now proceed to demonstrate tightness. Observe that, for any $n$ that the set $K_n := [n] \times [n] \times \cup_{2 \leq k \leq n} \mathcal{E}_k$ is compact. To establish the compact containment condition \cite[(7.9), p.129]{ethier2009markov} holds if for any $\varepsilon \in (0,1)$ and $T \in (0,\infty)$, there exists $M(\varepsilon, T) \in \Z_+$ such that
            \begin{equation}\label{E: CCC}
                \limsup_{N \to \infty} \PP_{\xi_0^n}^N\lp{G^N(t) \in K_{M(\varepsilon, T)} \, \, \forall 0 \leq t \leq T} \leq \varepsilon.
            \end{equation}
            Notice that for all time $t\in\R_+$,
            \begin{equation*}
              \max\left\{  |\xi^N(t)|, \,|m^N(t)| \right\}\leq \sup_{0 \leq s \leq t} l^N(t).
            \end{equation*}
            Therefore, to demonstrate \eqref{E: CCC}, it suffices to show that
            \begin{equation}\label{E: CCC_bis}
                \limsup_{N \to \infty} \PP_{\xi_0^n}^N\lp{\sup_{0 \leq t \leq T} l^N(t) \leq M(\varepsilon, T)} \leq \varepsilon.
            \end{equation}

            Let $\mathcal{Q}$ denote the generator of the cross-section size $L = \lp{L_t}_{t \in \R_+}$ for the limiting graph. Then for any $f$ in $C_b(\Z_+)$ we have that
            \begin{equation*}
                \mathcal{Q}f(n) = \lambda n \lp{f(n+1)-f(n)}
                +
                \sum_{i=1}^{n-1}\sum_{|\eta| = i} \tilde{q}_{\xi_0^l \eta}^n \lp{f(|\eta|) - f(n)}.
            \end{equation*}
            It is a general fact about Markov processes that for any $f$ in $C_b(\Z_+)$ that 
            \begin{equation}\label{E: mtg}
                M(t) := f(L_t) - f(L_0) - \int_0^t \mathcal{Q}f(L_s) ds
            \end{equation}
            is a martingale with quadratic variation
            \begin{equation}\label{E: M_QV}
                \qv{M}_t = \int_0^t \mathcal{Q}(f^2)(L_s) - 2f(L_s)\mathcal{Q}f(L_s)ds;
            \end{equation}
            see \cite[Lemma 5.1]{kipnis1998scaling} or \cite[Proposition 4.1.7]{ethier2009markov}. A truncation argument will enable us to take $f$ to be the identity function $Id$ (i.e. when $Id(n) = n$ for all $n \in \Z_+$) to obtain
            \begin{equation}\label{E: L_characterization}
                L_t = L_0 + \int_0^t \mathcal{Q}(Id^2)(L_s) - 2L_s\mathcal{Q}(Id)(L_s) ds + M^{Id}(t) \text{ \quad for all } t\in \R_+,
            \end{equation}
            where $M^{Id}$ corresponds to the suitable martingale defined in \eqref{E: mtg} with $f = Id$.
            We calculate directly that
            \begin{align}\label{E: generator_calculation}
                \mathcal{Q}(Id)(n) &= \lambda n + \sum_{i=1}^{n-1} \sum_{|\eta| = i} \tilde{q}_{\xi_0^n \eta}^n (i-n) \leq \lambda n,\\
                \mathcal{Q}(Id^2)(n) &= \lambda n (2n+1) +
                \sum_{i=1}^{n-1} \sum_{|\eta| = i} \tilde{q}_{\xi_0^n \eta}^n(i^2 - n^2), \text{ and }\\
                \mathcal{Q}(Id^2)(n)-2n\mathcal{Q}(Id)(n) &=
                n\lp{\lambda - \la{\tilde{q}_{\xi_0^n \xi_0^n}^n}} + \sum_{i=1}^{n-1} \sum_{|\eta| = i} i \tilde{q}_{\xi_0^n \eta}^n \leq n\lambda.
            \end{align}

            Combining \eqref{E: L_characterization}, \eqref{E: generator_calculation}, and \eqref{E: M_QV} yields
            \begin{align}\label{E: inequalities_martingale}
                L_t &\leq L_0 + \lambda \int_0^t L_s ds + M^{Id}(t),  \text{ and }\\
                \left\langle M^{Id} \right\rangle_t &\leq \lambda \int_0^t L_s ds.
            \end{align}
            
            This allows us to compare $L$ with the solution $X$ to
            \begin{eqnarray*}
                dX_t = \lambda X_{t-} + dM^{Id}_t,\quad   X_0 = L_0,
            \end{eqnarray*}
            using the observation that
            \begin{equation}\label{E: martingale_comparison}
                \PP\lp{\sup_{t \in [0,T]} L_t \geq K} \leq \PP\lp{\sup_{t \in [0,T]} X_t \geq K}.
            \end{equation}
            Observe that $X_t = e^{\lambda t}\lp{L_0+ N_t}$ where $N_t =\int_0^t e^{-\lambda s}dM^{Id}(s)$.
            Therefore
            \begin{equation}\label{E: decomposition_martingale}
                \sup_{t \in [0,T]} X_t \leq e^{\lambda T}\lp{L_0 + \sup_{t \in [0,T]} \int_0^t e^{-\lambda s} dM^{Id}(s)}.
            \end{equation}
            By \eqref{E: M_QV}, \eqref{E: martingale_comparison}  and Doob's inequality applied to \eqref{E: decomposition_martingale} we get that there exists $M(\varepsilon, T)$ such that
            \begin{equation*}
                \PP\lp{\sup_{0 \leq t \leq T} l(t) \leq M(\varepsilon, T)} \leq \varepsilon.
            \end{equation*}

            Finally, \eqref{E: CCC_bis} can be obtained by the same argument above, when $L_t$ and $\int_0^t\mathcal{Q}f(L_s)ds$ are replaced, respectively, by $l^N(k)$ and $\sum_{i=0}^{k-1} \mathcal{Q}^N f(l^N(i))$ for $\mathcal
            Q^N$ the generator of $l^N$. Indeed, the uniform control in $l$ given by Lemma~\ref{L: ancestral_graph_transition_rates}, which depends on Assumptions~\ref{A:c_N} and \ref{A: timescale}, is enough for the argument above to follow as Equation~\ref{E: generator_calculation} holds with the limiting transition rates $q^n_{\xi\eta}$ replaced by the finite $N$ transition rates $q^{N,n}_{\xi\eta}$, which follows by Assumption~\ref{A:Q_Nn} (here $\xi,\eta$ are simply placeholders for the true elements used in the above equation). \eqref{E: CCC_bis} implies the convergence of $\bar{G}^N$ in $\mathcal{D}\lp{\R_+, \Omega}$ to $G_{\tilde{Q}, \lambda}^n$ by \cite[Corollary 8.9, Chapter 4]{ethier2009markov} and \eqref{E: semigroup_difference}.        
            
        \end{proof}

        The discrete-time ancestral graph $G^{N,n}=(G^{N,n}(k))_{k\in\Z_+}$ can be viewed graphically as having $n$ initial nodes (the nodes at the bottom of Figure \ref{F: ancestral_graph_fig}). These $n$ nodes  correspond to the $n$ individuals $\{\hat{X}_i^N(0)\}_{1 \leq i \leq n}$ from whom we sampled the $n$ lineages at time-step $0$. Viewed as a $\Omega$-valued process, this graph has the following key advantages:
        it get rid of the labels of all individuals, while  keeping track of the set of all possible ancestral individuals (of the sample) and their parental relationships.

        As mentioned, the ancestral lines $\{X_i^N\}$ defined in \eqref{E:Alines} is a system of coalescing random walks on the pedigree. We shall construct a system of coalescing random walks, view as $\Z_+$-valued processes, on the reduced object $G^{N,n}$ as follows.
                
        Define $x_i^N(0) := i$ for all $1 \leq  i \leq n$. Suppose $x_i^N(j)$ is defined for all $j \leq k$ for induction. If $\hat{X}_i^N(k)$ experiences an outcrossing event in the $k$th time-step, then $x_i^N(k+1)$ is equal to $l^N(k+1)$ with probability $\frac{1}{2}$ or else remains equal to $x_i^N(k)$ with probability $\frac{1}{2}$. (In the event that there is more than one outcrossing event in a single time-step, it may be that each outcrossing individual coalesces in the position $l+1$; in the limited outcrossing regime, this possibility is negligible, so we do not need to consider this case in any detail.) We therefore have $x_i^N = \lp{x_i^N(k)}_{k \in \Z_+}$ defined for all time. If any two walks $x_i^N$ and $x_j^N$ coalesce at any point in time, they are then required to remain together for all time thereafter. This gives a family of $n$ simple random walks on $G^{N,n}$ satisfying
        \begin{enumerate}
            \item $x_i^N(k) \leq l^N(k) $ for all $k \in \Z_+$,
            \item at any outcrossing event, each random walk follows each of the two paths available with equal probability,
            \item and once two random walks coalesce they remain together for all time thereafter.
        \end{enumerate}

\medskip

 We may also define an associated coalescent process based on these $x_i^N$. 
        \begin{definition}\label{D: discrete_ancestral_graph_coalescent}
         Let  $\Pi_{G}^{N,n} = \lp{\Pi_{G}^{N,n}(t)}_{ \in \R_+}$ be  the sped-up coalescent process defined by
            \begin{equation*}
                i \sim j \text{ with respect to } \Pi_{G}^{N,n}(t) \text{ if } x_i^N\lp{\lfloor t c_N^{-1} \rfloor} = x_j^N\lp{\lfloor t c_N^{-1} \rfloor},
            \end{equation*}
        and  define the quenched law
            \begin{equation}\label{Def:LNnG}
                \mathcal{L}^{N,n}_{G} := \PP\lp{\Pi^{N,n}_{G} \in \cdot \,\mid\, G^{N,n}}.
            \end{equation}
        \end{definition}
        
        \begin{remark}\rm
            Note that $\mathcal{L}^{N,n}_G$ defined in \eqref{Def:LNnG} is equal to $\PP\lp{\chi^{N,n}(\lfloor t c_N^{-1} \rfloor) \in \cdot \,\mid\, \mathcal{A}_N}$ which admits no random time change, unlike that of $\mathcal{L}^{N,n}$. This is because, unlike the random walks on the pedigree, we assume being in the same node in the ancestral graph $G^{N,n}$ means instantaneous coalescence of the particles. 
                We will show in Lemma \ref{L: G_and_bar_same_limits} that $\mathcal{L}^{N,n}$ and $\mathcal{L}^{N,n}_G$ have the same weak limit as $N\to\infty$. 
        \end{remark}

A suitable notion of continuity for random walks on $\Omega$-valued processes, established in Section~\ref{S: theta_walks}, will then give the claimed result Theorem~\ref{T: quenched_limited_outcrossing}.

    \subsection{Coalescing random walks on \texorpdfstring{$\Omega$-valued}{} processes}\label{S: theta_walks}

        To each element $g = \lp{g(t)}_{t \in\R_+} = \lp{l(t), m(t), \xi(t)}_{t \in \R_+}$ in the Skorokhod space $\mathcal{D}\lp{\R_+, \Omega}$, we will define an associated coalescent process $\chi(g)$. Let $g(t-) = \lim_{s \uparrow t} g(s) = (l(t-), m(t-),\xi(t-))$ denote the left limit of $g$ at $t$. We suppose that $g(0) = (l_0, m_0, \xi_0)$ and define, for all $1 \leq i \leq l_0$ that $x_i(0) = i$. The $x_i$ may be extended to all time as follows:
        \begin{itemize}
            \item On any interval $[s,t)$ on which $l$ is constant, each $x_i$ stays constant.
            \item At each jump time $t$ of $g$, if $l(t) < l(t-)$, then we set
            \begin{equation*}
                x_i(t) = \inf \{[x_i(t-)]_{\xi(t)}\},
            \end{equation*}
            where $[a]_{\xi}$ denotes the block of $\xi$ containing $a$. If the infimum is taken over an empty set, then $x_i(t) = x_i(t-)$.
            \item At each jump time $t$ of $g$, if $l(t) > l(t-)$, then we let $C(t)$ be an independent random variable taking value in $\{m(t), l(t)\}$ where $\PP\lp{C(t) = m(t)} = \PP\lp{C(t) = l(t)} = \frac{1}{2}$. Then for every $i$ in the set $I := \{i : x_i(t-) = m(t)\}$, we let $x_i(t) := C(t)$. If $i$ is not in $I$, then $x_i(t) = x_i(t-)$.
        \end{itemize}

        The $\{x_i\}_{i=1}^n$ as $\Z_+$-valued processes take the form a family of coalescing random walks on $g$. We define their associated coalescent $\Pi^n(g)$ by
        \begin{equation*}
            i \sim j \text{ with respect to } \Pi^n(g)(t) \text{ if } x_i(t) = x_j(t).
        \end{equation*}

        \begin{remark}\rm
            Note that  $\Pi^{N,n}_{G} $ in Definition \ref{D: discrete_ancestral_graph_coalescent} is equal to $\Pi^n(\bar{G}^N)$, where $\bar{G}^N$ is the sped-up ancestral graph of Lemma~\ref{L: graph_convergence} and $\Pi^n$ is the map defined above.
        \end{remark}

        \begin{lemma}\label{L: phi_continuity}
        Let $\mathcal{E}_{\infty} := \bigsqcup_{n \in \N} \mathcal{E}_n$ and recall the space $\Omega$ in \eqref{Def:StateSpaceOmega}. The measure-valued map $\Phi: \mathcal{D}\lp{\R_+, \Omega} \to \mathcal{M}_1\lp{\mathcal{D}\lp{\R_+, \mathcal{E}_{\infty}}}$ defined by
        \begin{equation*}
            \Phi(g) := \PP\lp{\Pi(g) \in \cdot \,\mid\, g}.
        \end{equation*}
        is  continuous.
        \end{lemma}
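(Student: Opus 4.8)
The plan is to prove continuity of $\Phi$ by a pathwise coupling argument: I will realise $\Pi(g)$ and $\Pi(g_N)$ on a single probability space carrying one sequence of fair coins, show that whenever $g_N\to g$ in $\mathcal{D}\lp{\R_+,\Theta}$ the two resulting coalescent paths eventually \emph{coincide up to a vanishing time change}, and then conclude by dominated convergence. The first step is to unpack $J_1$ convergence in this setting. Since $\Theta=\Z_+\times\Z_+\times\mathcal{E}$ is a countable discrete space, every càdlàg $\Theta$-valued path is piecewise constant with only finitely many jumps on each compact interval, and because the metric on $\Theta$ is $\{0,1\}$-valued, $g_N\to g$ in $\mathcal{D}\lp{\R_+,\Theta}$ is equivalent to: for every $T>0$ that is a continuity point of $g$ there exist increasing homeomorphisms $\lambda_N$ of $[0,T]$ with $\lambda_N\to\mathrm{id}$ in the Skorokhod sense (uniformly, and with slopes tending to $1$) such that, for all $N$ large, $g_N\circ\lambda_N\equiv g$ on $[0,T]$. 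In particular $g_N(0)=g(0)$ for $N$ large, so all paths start with the same number $l_0$ of walkers at positions $1,\dots,l_0$, and the \emph{ordered} list of (left-limit, value) pairs at the successive jumps of $g_N$ on $[0,T]$ is identical to that of $g$; only the jump locations differ, being moved by $\lambda_N^{-1}$.

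Next I would set up the coupling. The only randomness in the construction of $\Pi(g)$ from $g$ is the i.i.d.\ sequence of fair choices $C(\cdot)\in\{m(t),l(t)\}$ made at the successive jumps at which the first coordinate increases; I encode these as one i.i.d.\ sequence $(U_j)_{j\ge1}$ of fair coins, $U_j$ used at the $j$-th such jump, and I build $\Pi(g)$ and all the $\Pi(g_N)$ from this same sequence. The walker construction is \emph{equivariant under time change}: on intervals where $l$ is constant the $x_i$ stay put; at a jump where $l$ decreases each $x_i$ is updated by the deterministic rule $x_i\mapsto\inf\{[x_i]_{\xi}\}$; at a jump where $l$ increases all walkers at the label $m$ move together, according to the next unused coin, to one of $\{m,l\}$. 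Since $g$ and $g_N\circ\lambda_N$ share the same ordered jump data on $[0,T]$ for $N$ large — in particular the same classification of jumps as $l$-increasing, $l$-decreasing, or $l$-constant, so the coins $(U_j)$ are consumed in matching order — a routine induction on the jump index shows that the walker positions produced by $g_N$ are exactly those produced by $g$, reparametrised by $\lambda_N^{-1}$. Hence $\Pi(g_N)\circ\lambda_N\equiv\Pi(g)$ on $[0,T]$ for \emph{every} realisation of the coins, provided $N$ is large (how large depending only on $g$, $g_N$, $T$, not on the coins). Consequently the $J_1$ distance of $\Pi(g_N)$ and $\Pi(g)$ on $[0,T]$ is bounded by the Skorokhod cost of $\lambda_N$, which tends to $0$; letting $T$ range over continuity points of $g$ tending to $\infty$ gives $\Pi(g_N)\to\Pi(g)$ in $\mathcal{D}\lp{\R_+,\mathcal{E}}$ \emph{surely}. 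Finally, for any bounded continuous $F$ on $\mathcal{D}\lp{\R_+,\mathcal{E}}$, dominated convergence yields $\Phi(g_N)(F)=\mathbb{E}\lb{F(\Pi(g_N))}\to\mathbb{E}\lb{F(\Pi(g))}=\Phi(g)(F)$, i.e.\ weak convergence $\Phi(g_N)\to\Phi(g)$ in $\mathcal{M}_1\lp{\mathcal{D}\lp{\R_+,\mathcal{E}}}$; as $(g_N)$ was an arbitrary convergent sequence, $\Phi$ is continuous.

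The one genuinely delicate point is the first step: verifying that $J_1$ convergence of $\Theta$-valued càdlàg paths forces eventual exact coincidence of the jump data after a time change whose slopes tend to $1$, and extracting, for each horizon $T$, a \emph{single} $\lambda_N$ along which the fragmentation/coalescence pattern (hence the coin usage) is preserved — this is exactly what lets the same $\lambda_N$ also work for $\Pi(g_N)$. Everything past that is the induction and a bounded-convergence argument, requiring no estimates. Two pieces of bookkeeping should be recorded: the number of walkers never changes, so $\Pi(g)$ is càdlàg with values in the \emph{fixed} finite discrete space $\mathcal{E}_{l_0}$ and $\Phi(g)$ is a well-defined probability measure there; and restricting the horizons $T$ to continuity points of $g$ (which also bounds where $\Pi(g)$ jumps) is what removes any boundary artefact at the right endpoint.
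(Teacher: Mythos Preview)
Your proof is correct and takes essentially the same approach as the paper's: both exploit the discreteness of $\Theta$ to argue that $J_1$ convergence $g_N\to g$ forces the ordered jump data to eventually coincide up to a vanishing time change, couple the walks via a shared coin sequence so that $\Pi(g_N)$ agrees with $\Pi(g)$ up to that same time change, and finish by dominated convergence. The only cosmetic difference is that the paper routes the last step through its integral-functional criterion (Theorem~\ref{T: random_measure_integral_functionals}) and pointwise almost-sure convergence on $[0,T]$, whereas you establish sure $J_1$ convergence of $\Pi(g_N)$ to $\Pi(g)$ and test directly against an arbitrary bounded continuous $F$, which is slightly more self-contained.
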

        \begin{proof}
            Suppose that $g_N$ is a sequence of elements in $\mathcal{D}\lp{\R_+, \Omega}$ that converge therein to an element $g$. We need to show that $\Phi(g_N) \towd \Phi(g)$. That is, we need to show that for any dense subset $\mathcal{F}$ of functions $f$ in $C_b(\mathcal{D}\lp{\R_+, \mathcal{E}_{\infty}})$ that $\int f d\Phi(g_N)$ converges to $\int f d\Phi(g)$. As $\Omega$ is a locally compact Polish space, by Theorem~\ref{T: random_measure_integral_functionals} it suffices to check that
            \begin{equation*}
                \lp{\Phi(g_N)\lp{\int_0^\infty \varphi_i(s, x(s)) ds}}_{1 \leq i \leq k} \toL \lp{\Phi(g)\lp{\int_0^\infty \varphi_i(s, x(s)) ds}}_{1 \leq i \leq k}
            \end{equation*}
            for any collection $\lp{\varphi_i}_{1 \leq i \leq k}$ of elements in $C_c\lp{\R_+ \times \Omega}$.

            As $g_N$ converges to $g$ and $\Omega$ is a discrete space we have that, for large enough $N$, that $g_N(0)$ is eventually constant. In particular, we may restrict to the case that $g_N(0) = (n, m_0, \xi_0)$ for all $N$. This allows us to consider continuity of the restriction of $\Phi$ as a map into $\mathcal{M}_1\lp{\mathcal{D}\lp{\R_+, \mathcal{E}_n}}$ without loss of generality.

            As each $\varphi_i$ is compactly supported on $\R_+ \times \Omega$, there is a $T > 0$ such that $\cup_i supp(\varphi_i) \subset [0,T] \times \Omega$. Notice that, for any $N\geq 2$,
            \begin{equation}\label{E: integral_evaluation}
                \lp{\Phi(g_N)\lp{\int_0^\infty \varphi_i(s, x(s)) ds}}_{1 \leq i \leq k} = \lp{\mathbb{E}\lb{\int_0^T \varphi_i(s,  \Pi^n(g_N)(s)) ds \in \cdot \,\mid\, g_N}}_{1 \leq i \leq k}.
            \end{equation}
            The quantity on the right-hand-side of \eqref{E: integral_evaluation} depends only on discrete graph structure of $g_N$, i.e. the finite sequence of values that $g_N$ takes, and the edge lengths of the graph $g_N$, i.e. the jump times. As $\Omega$ is discrete and $g_N \to g$ in the Skorokhod topology, for large enough $N$ the discrete graph structure of $g_N$ is eventually constant. Further, the edge lengths of $g_N$ converge uniformly on $[0,T]$ to those of $g$. This yields the desired convergence.

            As such, the coalescing random walks on $g_N$ and $g$ may be coupled in such a way that, on $[0,T]$, they make the same jumps at each fragmentation event as each other. In particular, we can take it so that $\Pi^n(g_N)$ converges pointwise almost surely on $[0,T]$ to $\Pi^n(g)$. Because the integrands are bounded and convergence is pointwise almost surely on $[0,T]$, the dominated convergence theorem yields the claim.
            
        \end{proof}

        As a corollary, we have that if $G_N$ is a sequence of $\mathcal{D}\lp{\R_+, \Omega}$-valued random variables converging weakly to $G$, then $\Phi(G_N)$ converges weakly in law to $\Phi(G)$. This is a key observation in the proof of Theorem~\ref{T: quenched_limited_outcrossing}.

    \subsection{Proof of Theorem~\ref{T: quenched_limited_outcrossing}}\label{S: quenched_critical_proof}

        To prove Theorem~\ref{T: quenched_limited_outcrossing}, we establish a lemma demonstrating the equivalence of the quenched coalescent for the $Q$-$\lambda$ graph, as in Definition~\ref{D: coalescent_on_Q_lambda}, and the quenched coalescent for an EFC, as in Definition~\ref{D: EFC_walks_coalescent}. Before the formal statement of the lemma, we give an intuitive explanation for why the two constructions agree. Fix any finite sample of at most $n$ particles and follow their coalescing random-walk trajectories. Under both the $Q$-$\lambda$ graph and the EFC, coalescences of the walks occur with the same structures and rates, and fragmentations of each walk occur with the same rates. Because these rates uniquely determine the law of the induced coalescent on partitions and because both the $Q$-$\lambda$ graph and EFC give the same amount of information on the particle trajectories, the quenched coalescent laws coincide. The lemma that follows makes this argument rigorous via a joint coupling of the $Q$-$\lambda$ graph, the EFC, and random walks thereon.

        \begin{lemma}\label{L: equivalence_EFC_Q_lambda}
            Suppose that $Q = \lp{Q_n}_{n \in \N}$ is a consistent family of generators for a $\Xi$-coalescent for some measure $\Xi$ satisfying \eqref{E:TotalTimescale} (i.e. $2=\Xi(\Delta)$). Let $\Xi_0$ be the restriction of $\Xi$ on $\Delta\setminus \{\bf{0}\}$. Let $\Pi$ be an EFC process with characteristics $c_k = \Xi(\mathbf{0})$, 
            $\nu_{Coag} = \nu_{\Xi_0}$, $c_e = \lambda$, and $\nu_{Disl} = 0$. Then the random measures $\mathcal{L}_{\Pi}^n$ and $\mathcal{L}_{Q,\lambda}^n$ are equal in distribution.
        \end{lemma}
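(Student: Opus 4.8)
The plan is to realize the $Q$-$\lambda$ graph $G_{Q,\lambda}^n$ as an explicit measurable functional $\Psi(\Pi)$ of the EFC $\Pi$, to couple the coalescing walks living on the two objects through a shared family of fair coins, and then to conclude by a short conditional-independence argument. Write $\Xi':=\Xi-\Xi(\mathbf 0)\delta_{\mathbf 0}$. Two facts about the EFC of the statement are used: by Remark~\ref{R: xi_q_connection}, an EFC with coalescence characteristics $(c_k,\nu_{Coag})=(\Xi(\mathbf 0),\nu_{\Xi'})$ has, on every finite restriction to $l$ blocks, coalescence rates equal to those of the consistent family $Q=(Q_n)$, i.e. the $l$-$\Xi$-coalescent rates $q^l_{\xi_0^l \eta}$; and, since $\nu_{Disl}=0$ and $c_e=\lambda$, each block of $\Pi$ fragments, independently, into exactly two parts at rate $\lambda$. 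These are precisely the two mechanisms appearing in the generator $\mathscr L$ of Definition~\ref{D: Q_lambda_graph}. First I would fix the Poissonian construction of $\Pi$ from \cite{berestycki04}, started from the partition of $\N$ into $n$ infinite blocks as in Definition~\ref{D: EFC_walks_coalescent}, so that the blocks of $\Pi(t)$ always partition $\N$ and their number $l(t)$ starts at $n$ and stays finite for all finite $t$ almost surely (a Yule-type bound of the kind used in Lemma~\ref{L: graph_convergence}).

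Next I would define $\Psi(\Pi)=\big(l(t),m(t),\xi(t)\big)_{t\ge0}\in\mathcal D\lp{\R_+,\Theta}$ by letting $l(t)$ be the number of blocks of $\Pi(t)$, $m(t)$ the index of the block that fragmented most recently, and $\xi(t)$ the structure (an element of $\mathcal E_l$) of the most recent coalescence event among the $l$ blocks, all under the append-only relabelling convention in which a fragmentation of block $j$ gives the newly created block the label $l+1$ and leaves every other label unchanged. A direct generator computation then shows that $\Psi(\Pi)$ is a continuous-time Markov chain whose generator is exactly $\mathscr L$: the $l$ current blocks each fragment at rate $\lambda$, contributing $\sum_{j=1}^l\lambda\lb{f(l+1,j,\xi)-f(l,m,\xi)}$, and the $l$ blocks coalesce with structure $\eta\neq\xi_0^l$ at rate $q^l_{\xi_0^l \eta}$, contributing $\sum_{\eta\neq\xi_0^l}q^l_{\xi_0^l \eta}\lb{f(|\eta|,m,\eta)-f(l,m,\xi)}$. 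By the existence-and-uniqueness remark following Definition~\ref{D: Q_lambda_graph}, this identifies $\Psi(\Pi)\overset{d}{=}G_{Q,\lambda}^n$.

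I would then build the walks $\{x_i\}_{i=1}^n$ on $\Pi$ exactly as in Definition~\ref{D: EFC_walks_coalescent}, drawing an independent family $U$ of fair coins — one per fragmentation atom — for the branching choices, and simultaneously build the walks on $\Psi(\Pi)$ as in Definition~\ref{D: random_walks_on_graph} and the construction of Section~\ref{S: theta_walks}, fed by the same coins $U$ under the identification of the two sub-blocks of a fragmenting block with the labels $m$ and $l+1$. Because at a fragmentation all walks carrying the fragmenting block's label follow the common coin, and at a coalescence every walk follows the image of its block under the recorded structure, the relation ``$x_i(t)=x_j(t)$'' evolves identically under the two constructions, so the induced $\En$-valued coalescents coincide: $\chi^n=\chi^n_G$ almost surely. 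Since $\chi^n_G$ is here a deterministic function of $\big(\Psi(\Pi),U\big)$ with $U$ independent of $\Pi$ and hence of $\sigma(\Psi(\Pi))$, it follows that
\[
\mathcal{L}_\Pi^n=\PP\lp{\chi^n\in\cdot\mid\Pi}=\PP\lp{\chi^n_G\in\cdot\mid\Pi}=\PP\lp{\chi^n_G\in\cdot\mid\Psi(\Pi)}=\Phi\big(\Psi(\Pi)\big),
\]
where $\Phi$ is the measurable (and, by Lemma~\ref{L: phi_continuity}, continuous) map sending a $\Theta$-path to the conditional law of its coalescing-walk coalescent, while $\mathcal{L}_{Q,\lambda}^n=\Phi\big(G_{Q,\lambda}^n\big)$ by Definition~\ref{D: coalescent_on_Q_lambda}. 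As $\Psi(\Pi)\overset{d}{=}G_{Q,\lambda}^n$, we conclude $\mathcal{L}_\Pi^n=\Phi(\Psi(\Pi))\overset{d}{=}\Phi(G_{Q,\lambda}^n)=\mathcal{L}_{Q,\lambda}^n$, which is the assertion.

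The step I expect to be the main obstacle is the construction and verification of $\Psi$: the EFC records genuine $\mathcal E_\infty$-labels whereas the $Q$-$\lambda$ graph retains only the minimal bookkeeping $(l,m,\xi)$, so one must check that the append-only relabelling is internally consistent along arbitrary successions of fragmentation and coalescence events and that the resulting $\Theta$-valued functional is genuinely Markov with exactly the generator $\mathscr L$ (the finiteness of $l(t)$ being a comparatively routine Yule-type estimate). What makes the argument go through is that no relabelling choice affects the induced coalescent $\chi^n$, whose transitions reference only which block splits and which blocks merge and how — data that is faithfully preserved in $\Theta$ — so the two quenched laws coincide in distribution even though $\Pi$ carries strictly more information than $G_{Q,\lambda}^n$.
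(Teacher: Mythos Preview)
Your proposal is correct and follows essentially the same coupling strategy as the paper: realize the $Q$-$\lambda$ graph inside the EFC, couple the walks on the two structures via shared fair coins so that the induced $\En$-valued coalescents agree pointwise, and conclude by a conditional-independence step. The bookkeeping differs slightly. The paper builds $G$ by tracking the subgraph of $\Pi$ traced out by the walk positions $\{x_i^\Pi(t-)\}$, so its $G$ is constructed jointly with $\Pi$ and the coins; you instead take $\Psi(\Pi)$ to record \emph{all} blocks of $\Pi$, which makes the final conditional-independence step cleaner since $\chi^n_G$ is then a deterministic function of $(\Psi(\Pi),U)$ with $U\perp\Pi$. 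One small gap to flag: Definition~\ref{D: EFC_walks_coalescent} (and the paper's proof) actually takes $\Pi(0)=\xi_0$, the partition of $\N$ into singletons, not a partition into $n$ infinite blocks; with that convention $l(t)=|\Pi(t)|=\infty$ and your $\Psi(\Pi)$ does not land in $\Theta$. You therefore need either a short exchangeability argument that the law of $\mathcal{L}_\Pi^n$ is unchanged when $\Pi(0)$ is replaced by any partition in which $1,\ldots,n$ lie in distinct blocks, or else to follow the paper and let the functional track only the blocks reachable from the initial $n$ labels rather than all of $\Pi$.
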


        \begin{proof}
            For a given $Q$-$\lambda$ graph $G = \lp{(l(t), m(t), \xi(t))}_{t \geq 0}$ and an EFC process $\Pi = \lp{\Pi(t)}_{t \geq 0}$, we let $x^G = \lp{x_i^G}_{1 \leq i \leq n}$ and $x^\Pi = \lp{x_i^\Pi}_{1 \leq i \leq n}$ denote the family of coalescing random walks on $G$ and on $\Pi$, respectively. Let $\chi^n_G$ and $\chi^n_\Pi$ denote the associated coalescents for $x^G$ and $x^\Pi$, respectively. We describe a coupling of $\Pi$, $G$, $x^\Pi$, and $x^G$ together so that the claim holds. Specifically, it will be such that $\chi^n_G = \chi^n_\Pi$ pointwise, and such that $\PP\lp{\chi^n_\Pi \in \cdot \,\mid\, \Pi} = \PP\lp{\chi^n_\Pi \in \cdot \,\mid\, G}$, which together prove the claim. The construction follows that for the EFC \cite[Section 3.2]{berestycki04}.

            We take $\Pi(0) = \xi_0$, $G(0) = \lp{n,0,\xi_0^n}$, $x^G(0) = x^\Pi(0) = \lp{1,2,\ldots, n}$, and $\xi(0) = \xi_0^n$. Let $P_C = \lp{(t, \xi^{(C)}(t))}_{t \geq 0}$ and $P_F = \lp{\lp{t, \xi^{(F)}(t), k(t)}}_{t \geq 0}$ be two independent Poisson point processes (PPPs) on the same filtration. The atoms of $P_C$ are points in $\R_+ \times \mathcal{E}_\infty$ and $P_C$ has intensity measure $dt \otimes C$. The atoms of $P_F$ are points in $\R_+ \times \mathcal{E}_\infty \times \N$ and has intensity measure $dt \otimes F \otimes \#$, where $\#$ is the counting measure on $\N$. We utilize $P_F$ and $P_C$ to construct $(\Pi, G, x^G, x^\Pi)$ jointly.

            To this end, we establish notation for the restriction of partitions of $\N$ to finite subsets. For $\xi \in \mathcal{E}_\infty$ and a finite subset $I = \left\{i_j\right\}_{1 \leq j \leq l}$ of $\N$, we define $\xi(I)$ to be the partition of $[l]$ defined by
            \begin{equation*}
                j \sim_{\xi(I)} k \text{ if } i_j \sim_{\xi} i_k.
            \end{equation*}
            With this notation, the coupling may be described as follows:
            \begin{enumerate}
                \item If $t$ is neither an atom time of $P_F$ nor $P_C$, then $\Pi(t) = \Pi(t-)$, $G(t) = G(t-)$, $x^\Pi(t) = x^\Pi(t-)$, and $x^G(t) = x^G(t-)$.
                \item If $t$ is an atom time of $P_C$, then
                \begin{enumerate}
                    \item $\Pi(t) = Coag(\Pi(t-), \xi^{(C)}(t))$,
                    \item $G(t) = \lp{\la{\xi^{(C)}(t)\lp{\{x_i^\Pi(t-)\}_{1 \leq i \leq n}}}}, m(t-), \xi^{(C)}(t)\lp{\{x_i^\Pi(t-)\}_{1 \leq i \leq n}}$,
                    \item for all $1 \leq i \leq n$, $x_i^\Pi(t) = \inf\lb{x_i^\Pi(t-)}_{\xi^{(C)}(t)}$, where $[a]_\xi$ denotes the block in $\xi$ to which $a$ belongs,
                    \item and $x_i^G(t)$ is equal to the index of the block of $\xi^{(C)}(t)\lp{\{x_i^\Pi(t-)\}_{1 \leq i \leq n}}$ to which $x_i^\Pi(t-)$ belongs, when ordered by their least element.
                \end{enumerate}
                \item If $t$ is an atom time of $P_F$, then
                \begin{enumerate}
                    \item $\Pi(t) = Frag(\Pi(t-), \xi^{(F)}(t), k(t))$,
                    \item if $k(t) \notin \{x_i^\Pi(t)\}_{1 \leq i \leq n}$, then $G(t) = G(t-)$, $x^\Pi(t) = x^\Pi(t-)$, and $x^G(t) = x^G(t-)$,
                    \item and if $k(t) \in \{x_i^\Pi(t)\}_{1 \leq i \leq n}$, then $G(t) = \lp{l(t-), m(t), \xi(t-)}$, where $m(t)$ is equal to $x_i^G(t-)$ (for any $i$ such that $x_i^\Pi(t-) = k(t)$; check that this is indeed well-defined); for $H$ an independent Bernoulli random random variable with $\PP\lp{H = 0} = \PP\lp{H = 1} = \frac{1}{2}$ and if the only non-singleton block of $\xi^{(F)}(t)$ is $\{r,s\}$, then
                    \begin{equation*}
                        x_i^\Pi(t) = 
                        \begin{cases}
                            x_i^\Pi(t-) &, \text{ if } k(t) \neq x_i^\Pi(t-) \text{ or } H=0\\
                            s &, \text{ if } k(t) = x_i^\Pi(t-), H = 1
                        \end{cases}
                    \end{equation*}
                    and
                    \begin{equation*}
                        x_i^G(t) = 
                        \begin{cases}
                            x_i^G(t-) &, \text{ if } x_i^\Pi(t) = x_i^\Pi(t-)\\
                            l(t) &, \text{ if } x_i^\Pi(t) = s
                        \end{cases}.
                    \end{equation*}
                \end{enumerate}
            \end{enumerate}

            By construction of $P_F$ and $P_C$ it is manifest that $\Pi$ is an EFC as described and $G$ is a $Q$-$\lambda$ graph. Further, it is indeed the case that $\chi^n_G = \chi^n_\Pi$ pointwise. As the structure of coalescent events and fragmentation events for the $x^\Pi$ are fully determined by $G$ we indeed have that $\PP\lp{\chi^n_\Pi \,\mid\, \Pi} = \PP\lp{\chi^n_\Pi \,\mid\, G}$. This yields the claim.

        \end{proof}

        We now show that if $\mathcal{L}^{N,n}_G$ converges in distribution as a random measure to a random measure $\mathcal{L}$, then so does $\mathcal{L}^{N,n}$. In particular, these two sequences have the same distributional limit. The spirit of the argument is really that the path of elements in $\En$ for $\bar{\chi}^{N,n}$ and $\Pi^{N,n}_G$ are the same with high probability, and that the two differ on a set of negligible measure as $N \to \infty$. As we compare these measures via integral functionals of compactly supported test functions, in the sense of Theorem~\ref{T: random_measure_integral_functionals}, this is sufficient to show that the differences in their evaluations of these integral functions vanishes as $N$ tends to infinity.

        \begin{lemma}\label{L: G_and_bar_same_limits}
            Suppose that, as $N \to \infty$, that $c_N \to 0$, $d_N c_N^{-1} \to \lambda < \infty$, and $\mathcal{L}^{N,n}_G\towd\mathcal{L}$. Then $\mathcal{L}^{N,n}\towd\mathcal{L}$.
        \end{lemma}

        \begin{proof}
            We apply Theorem~\ref{T: random_measure_integral_functionals} to demonstrate the claim. To this end, fix $\{\varphi_i\}_{i=1}^k \subset C_c(\R_+ \times \En)$ a finite collection of test functions. By Theorem~\ref{T: random_measure_integral_functionals} and the assumption that $\mathcal{L}^{N,n}_G \towd \mathcal{L}$ we have that
            \begin{align}\label{E: integral_convergence_assumption}
                \lp{\mathcal{L}^{N,n}_G\lp{\int_{\R_+} \varphi_i(s,x(s))ds}}_{1 \leq i \leq k}
                &= \lp{\mathbb{E}\lb{\int_{\R_+} \varphi_i\lp{s, \Pi^{N,n}_G(s)}ds \, \mid \, G^N}}_{1 \leq i \leq k}\\
                &\toL \lp{\mathcal{L}\lp{\int_{\R_+} \varphi_i(s,x(s))ds}}_{1 \leq i \leq k}.
            \end{align}
            For the claim to hold, by Theorem~\ref{T: random_measure_integral_functionals} and Equation~\eqref{E: integral_convergence_assumption}, it suffices to show that, for any one of the $\varphi_i$, that
            \begin{equation}\label{E: difference}
                \mathcal{L}^{N,n}\lp{\int_{\R_+} \varphi_i(s,x(s))ds} - \mathcal{L}^{N,n}_G \lp{\int_{\R_+} \varphi_i(s,x(s))ds}
            \end{equation}
            converges in probability to $0$. To this end, we fix $\varphi$ in $C_c(\R_+ \times \En)$ and show precisely this convergence. We proceed by analyzing how $\bar{\chi}^{N,n}$ and $\Pi^{N,n}_G$ differ.

            By construction of $\Pi_G^{N,n}$ we have that, if $x_i^N(t)=x_j^N(t)$, then $i$ and $j$ belong to the same block in $\Pi^{N,n}_G(t)$. However, for the corresponding random walks on the pedigree $\hat{X}_i^N(\lfloor tc_N^{-1} \rfloor)=\hat{X}_j^N(\lfloor t c_N^{-1} \rfloor)$ does not imply that $i$ and $j$ belong to the same blocks in $\bar{\chi}^{N,n}(t)$. Instead, it means that $\chi^{N,n}(\lfloor t c_N^{-1} \rfloor) \notin \En$, and so we are experiencing the random time-change where $S(\lfloor tc_N^{-1} \rfloor) \neq \lfloor t c_N^{-1} \rfloor$. The two blocks containing $i$ and $j$ may either coalesce before $\chi^{N,n}$ reenters $\En$ or else disperse. We claim that dispersal occurs with probability tending to $1$ as $N \to \infty$, and that the time $T$ until either coalescence or dispersal satisfies $c_N T$ converges in probability to $0$. This will demonstrate further, at all times where $S(\lfloor t c_N^{-1} \rfloor) = \lfloor tc_N^{-1} \rfloor $, that $\Pi_G^{N,n}(t) = \bar{\chi}^{N,n}(t)$ with probability tending to $1$ as $N \to \infty$, for each $t\in(0,\infty)$.
            
            Indeed, by finiteness of the partition sizes it suffices to show this with the partition $\{(\{1\}, \{2\})\}$. Observe that the number $U$ of selfing events in the ancestral line of the sample before the ancestral lines the sample experience an outcrossing event satisfies
            \begin{equation*}
                \PP\lp{U = r} = \alpha_N^r(1-\alpha_N).
            \end{equation*}
            At each selfing event, the two sample lineages have probability $\frac{1}{2}$ of coalescing. Therefore, the probability of not coalescing before an outcrossing event is
            \begin{equation}\label{E: coalescence_probability}
                \sum_{r = 0}^{\infty} \alpha_N^r(1-\alpha_N) 2^{-r} = \frac{1-\alpha_N}{1-\frac{\alpha_N}{2}}.
            \end{equation}
            By Assumption~\ref{A: timescale} we know that $\alpha_N \to 1$, which shows that the non-coalescence probability of Equation~\eqref{E: coalescence_probability} converges to $0$ as $N \to \infty$. That $c_N T$ converges to $0$ as $N \to \infty$ follows from Assumption~\ref{A:c_N} and the fact that the number of time-steps until a coalesce event or a dispersal is geometric with success probability $\frac{K_N}{N}\lp{\frac{\alpha_N}{2} + 1-\alpha_N}$. By a direct computation this is $O(c_N)$ in expectation. The $\frac{K_N}{N}$ term comes from the probability that the individual containing the two sample lineages is an offspring, $\frac{\alpha_N}{2}$ is the probability that a selfing event occurs and yields a coalescence, and $1-\alpha_N$ is the probability of seeing an outcrossing event.

            We have established thus far that $\Pi_G^{N,n}(t) = \bar{\chi}^{N,n}(t)$ for all $t\in(0,\infty)$ in which $S(\lfloor t c_N^{-1} \rfloor) \neq \lfloor t c_N^{-1} \rfloor$ with probability $1 - o(1)$. Further, we have shown that the measure of time $t$ on which $S(\lfloor t c_N^{-1} \rfloor) \neq \lfloor t c_N^{-1} \rfloor$ converges to $0$ in distribution as $N\to\infty$. Therefore we have convergence in probability to $0$ of Equation~\eqref{E: difference}, as needed.        
        \end{proof}

        We now proceed with the proof of Theorem~\ref{T: quenched_limited_outcrossing}.
        \begin{proof}
        Notice by Lemma~\ref{L: equivalence_EFC_Q_lambda}, it suffices to show weak convergence in law of $\mathcal{L}^{N,n}$ to $\mathcal{L}_{Q,\lambda}^n$. By Lemma~\ref{L: graph_convergence} we have that $G^{N,n}$ converges weakly in $\mathcal{D}\lp{\R_+, \Omega}$ to a $Q$-$\lambda$ graph $G$. By Lemma~\ref{L: phi_continuity} we know that the map $\Phi$ is continuous. Continuity preserves weak convergence 
        and so $\mathcal{L}_G^{N,n} = \Phi(G^N)$ converges weakly in law to $\mathcal{L}_{Q,\lambda}^n = \Phi(G)$. By Lemma~\ref{L: G_and_bar_same_limits}  $\mathcal{L}^{N,n}$ converges weakly in law to $\mathcal{L}_{Q,\lambda}^n$.
        \end{proof}

\section*{Acknowledgements}
    Research supported by National Science Foundation grants DMS-2534011, DMS-2532574, DMS-2348164 and DMS-2152103. We appreciate Adam Jakubowski for pointing out  references \cite{bk10, kouritzin2016} and for discussion about convergence determining functions for measures on the Skorokhod space.

\bibliographystyle{alpha}
\bibliography{main}


\appendix
\renewcommand{\thesection}{Appendix~\Alph{section}}

\section{Proof of the Theorem~\ref{T: annealed_limit}}\label{S: annealed_proof}

    We produce a lemma here to show that, for our notion of weak convergence in distribution that the intensity measure map sending a random measure $\mu$ to its intensity measure $\mathbb{E}\lb{\mu}$ is continuous.

    \begin{lemma}\label{L: continuity_intensity_measure}
        Let  $\lp{\mu_N}_{N \in \N}$ be a sequence of random variables taking value in $\mathcal{M}_1\lp{\mathcal{D}\lp{\R_+, E}}$, where $E$ is a locally compact Polish space. Suppose that $\mu_N \towd \mu$ in $\mathcal{M}_1\lp{\mathcal{D}\lp{\R_+, E}}$. Then the sequence of intensity measures $\mathbb{E}\lb{\mu_N}\toL \mathbb{E}\lb{\mu}$ in $\R$.
    \end{lemma}
    \begin{proof}
        Recall that for a random measure $\mu$ the intensity measure $\mathbb{E}\lb{\mu}$ is defined as the unique measure for which
        \begin{equation*}
           \mathbb{E}\lb{\mu}(f) :=  \mathbb{E}\lb{\mu(f)}
        \end{equation*}
        for any $f$ in $C_b(\mathcal{D}\lp{\R_+, E})$.

        We need to show that
        \begin{equation*}
            \mathbb{E}\lb{\mu_N(f)}=\mathbb{E}\lb{\mu_N}(f) \to \mathbb{E}\lb{\mu}(f) = \mathbb{E}\lb{\mu(f)}
        \end{equation*}
        for any $f$ in $C_b\lp{\mathcal{D}\lp{\R_+, E}}$. Fix any such $f$. The topology on $\mathcal{M}_1(\mathcal{D}\lp{\R_+, E})$ is such that the evaluation maps $T_f$ sending $\mu$ to $\mu(f)$ is continuous and bounded. As $\mu_N \towd \mu$ we have that 
        \begin{equation*}
            T_f(\mu_N) = \mu_N(f) \toL T_f(\mu) = \mu(f).
        \end{equation*}
        As $T_f$ is continuous and bounded, convergence in distribution implies convergence in expectation. This yields the claim.
    
    \end{proof}

    With Theorem~\ref{T: quenched_limited_outcrossing} in hand, we are ready to prove Theorem~\ref{T: annealed_limit} as a corollary.

    \begin{proof}[Proof of Theorem~\ref{T: annealed_limit}]
        Note that the assumptions of Theorem~\ref{T: annealed_limit} are the same as those of Theorem~\ref{T: quenched_limited_outcrossing}. Consequently, by Theorem~\ref{T: quenched_limited_outcrossing} we have that the sequence of random measures $\mathcal{L}^{N,n}$ converges weakly in law to $\mathcal{L}_\Pi^n$ where $\Pi$ is an EFC with coalescence measure $\Xi$ on $\Delta$, no non-binary fragmentation, and binary fragmentation with rate $\lambda$. By Lemma~\ref{L: continuity_intensity_measure}, the intensity measure mapping is continuous, and so $\mathbb{E}\lb{\mathcal{L}^{N,n}} = \PP_{\xi_0^n}\lp{\bar{\chi}^{N,n} \in \cdot}$ converges in distribution 
 to $\mathbb{E}\lb{\mathcal{L}_{\Pi}^n}$. It suffices, therefore, to show that $\mathbb{E}\lb{\mathcal{L}_{\Pi}^n}$ is the law of an $n$-$\Xi$-coalescent.

        Consider a fixed sample $m \leq n$ of particles located in the EFC at time $t$. Then, regardless of the position of the $m$ particles, when we anneal over the EFC the times at which these $m$ particles coalesce is governed by $Q_m$, the projection of the generator $Q$ on $\Enfty$ associated to $\Xi$ by Remark~\ref{R: xi_q_connection}. But this is precisely the definition of the generator of an $m$-$\Xi$-coalescent in its initial state. This gives the claim.
    \end{proof}

\section{Convergence criteria for the general model}

    To provide convergence criteria for our diploid exchangeable model with selfing and overlapping generations, we proceed along the lines of \cite{MohleSagitov2003} and \cite{birkner2018coalescent, abfw25}. We define the genetic contribution $\tilde{V}_i$ for the $i$th individual to count the number of genetic descendents among the $2N$ genes in the next time-step that can trace their lineage back to the $i$th individual. Formally, we have
    \begin{equation*}
        \tilde{V}_i = \sum_j V_{i,j} +  V_{i,i}.
    \end{equation*}
    Selfing counts twice as each individual contains two sample lineages. Denote by
    \begin{equation*}
        \tilde{V}_{(1)} \geq \tilde{V}_{(2)} \geq \ldots \geq \tilde{V}_{(N)}
    \end{equation*}
    the ranked version of the total offspring numbers $\lp{\tilde{V}_i}_{1 \leq i \leq N}$ and by
    \begin{equation*}
        \Phi_N := \mathfrak{L}\lp{\mathfrak{V}_N}
    \end{equation*}
    the law on $\Delta$ of the ranked offspring frequencies $\mathfrak{V}_N$ defined by
    \begin{equation}\label{Def:rankoffspring}
        \mathfrak{V}_N := \lp{\frac{\tilde{V}_{(1)}}{2N}, \frac{\tilde{V}_{(2)}}{2N}, \ldots, \frac{\tilde{V}_{(N)}}{2N}, 0, 0, \ldots}.
    \end{equation}

    \begin{remark}
        We recall here the transition rate $q^n(\xi,\eta) = \lambda_{b;k_1,\ldots,k_r;s }$ from a state $\xi$ to a state $\eta$ for an $n$-$\Xi$-coalescent, where $\xi$ consists of $b$ blocks and $\eta$ is of $(k_1,\ldots,k_r;s)$-type with respect to $\xi$. That is, $\eta$ is obtained from $\xi_0$ by keeping $s$ of the blocks the same and coalescing the remaining $b-s$ blocks into $r$ blocks made up of $k_1,\ldots,k_r$ blocks from $\xi$. By \cite[Equation 1.9]{birkner2018coalescent}
        \begin{align}\label{E: Xi_generator}
            q^n(\xi,\eta) = \lambda_{b;k_1,\ldots,k_r;s } &= \mathbbm{1}_{r=1;k_1=2}\Xi(\mathbf{0})\\
            &+ \int_{\Delta \setminus \{\mathbf
            0\}} \sum_{l=0}^s \sum_{i_1,\ldots,i_r\ \mathrm{distinct}} \binom{s}{l}
            \Big(\prod_{j=1}^r x_{i_j}^{k_j}\Big)\lp{\prod_{j = r+1}^l x_{i_j}}\,
            (1-|x|)^{s-l} \frac{\Xi(dx)}{\left\langle x,x \right\rangle}.
        \end{align}
    \end{remark}

    \begin{lemma}\label{L: ordered_offspring_distribution}
        Suppose that Assumption~\ref{A:c_N} holds and that, as $N \to \infty$, 
        \begin{equation}\label{E: offspring_dist_convergence}
            \frac{1}{2c_N}\Phi_N(dx) \to \frac{1}{\left\langle x,x \right\rangle} \Xi'(dx)
        \end{equation}
        vaguely on $\Delta \setminus\{\mathbf{0}\}$, where $\Xi'$ is a probability measure on $\Delta$ (i.e. $\Xi'(\mathbf{0})=1-\Xi'(\Delta)$). Then Assumption~\ref{A:Q_Nn} holds where, for each $n\geq 2$, $Q_n$ is the generator of an $n$-$\Xi$-coalescent with $\Xi = 2\Xi'$.
    \end{lemma}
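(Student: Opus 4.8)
The plan is to follow the M\"ohle--Sagitov/Birkner--Blath--Eldon strategy, adapted to the diploid setting with selfing and carryover. Fix $n\ge2$. By exchangeability of $\mathbf V$, the one-step transition probabilities $p^{N,n}_{\xi\eta}$ depend only on the combinatorial shapes of $\xi,\eta\in\En$, so it suffices to compute, for $\xi=\xi_0^b$ (with $b=|\xi|$) and $\eta$ obtained by merging the $b$ blocks into $r$ groups of sizes $k_1\ge\cdots\ge k_r\ge2$ and leaving $s=b-\sum_m k_m$ singletons, the limit of $c_N^{-1}p^{N,b}_{\xi_0^b,\eta}$, and then to check that these limits equal the rates $\lambda_{b;k_1,\dots,k_r;s}$ of Definition~\ref{D: n-xi} for $\Xi=2\Xi'$. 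Consistency of $(Q_n)_{n\ge2}$ will follow from consistency of the one-step formulas under restriction of $[n]$ to $[m]$, and the identification of $\Xi$ (and the fact that $Q_n$ generates an $n$-$\Xi$-coalescent) then goes through Remark~\ref{R: xi_q_connection}.

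First I would derive an explicit one-step formula. Conditioning on the reproduction data $(K,\mathbf V,S)$ and the set of children, and recalling that the image under the haploid map $F_{\mathrm{hap}}$ of \eqref{E: hap_map} records only the time-$1$ individual in which each of the $b$ lineages resides, one uses the transition rule for the ancestral lines: a carried-over lineage stays in its own individual; a lineage in a selfed child moves to its unique parent; a lineage in an outcrossed child moves to a uniformly chosen one of its two parents; and, given $\mathbf V$ and the children, the segregation choices made by distinct lineages are independent. A direct computation gives $\PP(\text{lineage }\ell\text{ lands in individual }i)=\tilde V_i/(2N)$ for every $i$ other than $\ell$'s current individual, and more generally expresses $p^{N,b}_{\xi_0^b,\eta}$ as the expectation of an explicit polynomial in $\{\tilde V_i\}_{i\in[N]}$ with Mendelian and carryover coefficients --- the diploid analogue of the M\"ohle--Sagitov occupancy formula, in the spirit of \cite[Eq.~(1.4) and Section~2]{birkner2018coalescent} and \cite{abfw25}, of which \eqref{E:cN_general} is the special case $(b,\eta)=(2,\mathbf 1_2)$. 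Its leading term, coming from mergers through macroscopic families, is
\begin{equation*}
\mathbb{E}\!\left[\sum_{i_1,\dots,i_r\text{ distinct}}\ \sum_{l=0}^{s}\binom{s}{l}\sum_{\substack{j_1,\dots,j_l\text{ distinct}\\ \notin\{i_1,\dots,i_r\}}} \prod_{m=1}^{r}\Big(\frac{\tilde V_{i_m}}{2N}\Big)^{k_m}\prod_{t=1}^{l}\frac{\tilde V_{j_t}}{2N}\,\Big(1-\frac{1}{2N}\sum_{a}\tilde V_a\Big)^{s-l}\right],
\end{equation*}
which, up to lower order, is the integrand of $\lambda_{b;k_1,\dots,k_r;s}$ evaluated at the ranked frequency vector $x=(\tilde V_{(1)}/2N,\tilde V_{(2)}/2N,\dots)\in\Delta$, together with a remainder accounting for mergers assisted by carryover or passing through non-macroscopic families (the selfing correction, visible as the $-S/8$ term of \eqref{E:cN_general}, being of this type).

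Next I would divide by $c_N$ and pass to the limit. The leading term is $\int_\Delta g_\eta(x)\,\Phi_N(dx)$ for a bounded continuous $g_\eta$ that vanishes near $\mathbf 0$ (each factor $x_{i_m}^{k_m}$ has $k_m\ge2$), so Assumption~\ref{A:c_N} together with the vague convergence $\tfrac1{2c_N}\Phi_N\to\tfrac1{\langle x,x\rangle}\Xi'$ on $\Delta\setminus\{\mathbf 0\}$ yields $c_N^{-1}\int_\Delta g_\eta\,d\Phi_N\to\int_{\Delta\setminus\{\mathbf 0\}}g_\eta(x)\,\tfrac{2\Xi'(dx)}{\langle x,x\rangle}$, exactly the integral in $\lambda_{b;k_1,\dots,k_r;s}$ with $\Xi=2\Xi'$. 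For every $\eta$ that is not a single pairwise merger the remainder is $o(c_N)$, so the limit is precisely $\lambda_{b;k_1,\dots,k_r;s}$. For a pairwise merger ($r=1$, $k_1=2$, $s=b-2$) the remainder --- pairwise coalescences via carryover or via small families, with the remaining $b-2$ lineages inert --- contributes an extra atom: one shows $c_N^{-1}\times(\text{remainder})\to\kappa$ for some $\kappa\ge0$ and, using that $\Xi'$ is a probability measure (equivalently $c_N^{-1}\int_\Delta\langle x,x\rangle\,\Phi_N(dx)\to2(1-\Xi'(\{\mathbf 0\}))$ together with the normalization $c_N=\mathfrak p^{N,2}_{\xi_0^2,\mathbf 1_2}$), that $\kappa=2\Xi'(\{\mathbf 0\})=\Xi(\{\mathbf 0\})$. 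Reading off the diagonal entries of $c_N^{-1}(P_{N,n}-I)$ as negative row sums, $Q_n$ is then the generator of an $n$-$\Xi$-coalescent with $\Xi=2\Xi'$; Assumption~\ref{A:Q_Nn} holds and the rest is Remark~\ref{R: xi_q_connection}.

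The main obstacle is not the leading term but the uniform control of the error terms: showing, using only Assumption~\ref{A:c_N} (equivalently $\mathbb{E}[V_1^2]/N\to0$) and the vague-convergence hypothesis, that all sub-leading contributions are $o(c_N)$ for non-pairwise mergers, and that for pairwise mergers the residual mass escaping to $\mathbf 0$ in $\Phi_N/(2c_N)$ together with the carryover and selfing corrections assembles into exactly the atom $2\Xi'(\{\mathbf 0\})$ --- this is what forces the relation $\Xi=2\Xi'$ rather than an a priori unrelated Kingman rate, and it is the diploid counterpart of the estimates in \cite{MohleSagitov2003,schweinsberg,birkner2018coalescent}.
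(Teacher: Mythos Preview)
Your proposal is correct and follows essentially the same M\"ohle--Sagitov/Schweinsberg strategy as the paper: both express the one-step transition probabilities (after the haploid map) as paintbox integrals against $\Phi_N$, pass to the limit via the vague convergence hypothesis, and recover the Kingman atom $\Xi(\{\mathbf 0\})=2\Xi'(\{\mathbf 0\})$ from the normalization $c_N=p^{N,2}_{\xi_0^2,\mathbf 1_2}$ together with the mass of $\Phi_N/(2c_N)$ that escapes to $\mathbf 0$. The paper makes the ``main obstacle'' you identify concrete by truncating on $A_\varepsilon=\{\langle x,x\rangle>\varepsilon\}$ --- handling the non-Kingman rates on $A_\varepsilon$, bounding non-binary mergers on $A_\varepsilon^{\mathrm c}$ by $\varepsilon^{1/2}\cdot\tfrac{1}{c_N}\mathbb{E}[\sum_i x_i^2]$, and sending $\varepsilon\downarrow 0$ --- which is precisely the device needed to justify integrating your $g_\eta$ (which vanishes \emph{at} $\mathbf 0$ but not on a neighborhood) against the vague limit.
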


        \begin{proof}
        Let $\mathit{H}_{N,n} = \lp{h_{\xi\eta}^{N,n}}_{\xi,\eta \in \En}$ denote the one-step transition matrix of the ancestral process after applying the haploid map $F_{\rm hap}$ of Equation~\eqref{E: hap_map} so that
        \begin{equation*}
            h_{\xi\eta}^{N,n} 
            = \PP\lp{F_{\rm hap}(\chi^{N,n}(1)) = \eta \,\mid\, \chi^{N,n}(0) = \xi}.
        \end{equation*}
        To prove the claim we need to show that $\frac{1}{c_N}\lp{\mathit{H}_{N,n}-I}$ converges to a matrix $Q_n$ that is the generator of an $n$-$\Xi$-coalescent with $\Xi = 2\Xi'$. 
        
        Without loss of generality, consider $\xi = \xi_0^b$ for some $1 \leq b \leq n$, and let $\eta$ be of $(k_1,\ldots,k_r;s)$-type. That is, $\eta$ is obtained from $\xi_0^b$ by keeping $s$ of the blocks as singletons and coalescing the remaining $b-s$ blocks into $r$ blocks of sizes $k_1,\ldots,k_r$. Fix $\varepsilon > 0$ and define $A_\varepsilon := \{x \in \Delta : \langle x,x\rangle > \varepsilon\}$, with complement $A_\varepsilon^{\mathrm c} := \Delta \setminus A_\varepsilon$. We decompose
        \begin{equation}\label{E: QNn_decomposition}
            Q_{N,n} := \frac{1}{c_N}\lp{\mathit{H}_{N,n}-I}
            = \Phi_N(A_\varepsilon)\,Q_{N,n}^\varepsilon + \Phi_N(A_\varepsilon^{\mathrm c})\,Q_{N,n}^{\varepsilon, \mathrm c},
        \end{equation}
        where $Q_{N,n}^{\varepsilon}$ and $Q_{N,n}^{\varepsilon,\mathrm c}$ are the conditional transition rate matrices given the events         
        $\{\mathfrak{V}_N \in A_\varepsilon\}$ and $\{\mathfrak{V}_N \in A_\varepsilon^{\mathrm c}\}$ respectively, where     $\mathfrak{V}_N$ is   the ranked offspring frequencies \eqref{Def:rankoffspring}. That is,
        \begin{align*}
            Q_{N,n}^{\varepsilon} &= \frac{1}{c_N}\lp{\PP\lp{F_{\rm hap}(\chi^{N,n}(1)) = \eta \,\mid\, \chi^{N,n}(0) = \xi, \mathfrak{V}_N \in A_\varepsilon} - \delta_{\xi\eta}} \text{ and }\\
            Q_{N,n}^{\varepsilon,c} &= \frac{1}{c_N}\lp{\PP\lp{F_{\rm hap}(\chi^{N,n}(1)) = \eta \,\mid\, \chi^{N,n}(0) = \xi, \mathfrak{V}_N \in A_\varepsilon^c} - \delta_{\xi\eta}}.
        \end{align*}
        
        By Assumption~\eqref{E: offspring_dist_convergence},
        \begin{equation*}
            \frac{\Phi_N(A_\varepsilon)}{2c_N}
            \to \int_{A_\varepsilon}\frac{1}{\langle x,x\rangle}\,\Xi'(dx),
        \end{equation*}
        and hence $\Phi_N(A_\varepsilon) \in O_{\varepsilon}(c_N)$ and $\Phi_N(A_\varepsilon^{\mathrm c}) \to 1$ as $N \to \infty$ by Assumption~\ref{A:c_N}. Here $O_{\varepsilon}(1)$ simply means that, the term is $O(1)$ for any fixed $\varepsilon > 0$.

        For $x \in A_\varepsilon$, denote by
        \begin{equation*}
            \Upsilon_{b;(k_1,\ldots,k_r;s)}(x)
            := \sum_{l=0}^s \sum_{i_1,\ldots,i_r\ \mathrm{distinct}} \binom{s}{l}
            \Big(\prod_{j=1}^r x_{i_j}^{k_j}\Big)\lp{\prod_{j = r+1}^l x_{i_j}}\,
            (1-|x|)^{s-l} \frac{\Xi(dx)}{\left\langle x,x \right\rangle}
        \end{equation*}
        the probability that $b$ blocks thrown uniformly on $[0,1]$ with a paintbox $x$ form a partition of type $k_1,\ldots,k_r;s$ (see \cite[Equation 1.9]{birkner2018coalescent}). 
        For any measure $\Theta$ on $\Delta$, the probability that $b$ blocks thrown uniformly at random and independently on $[0,1]$ have $r$ groups of size $k_1, \ldots, k_r$ falling into the same intervals and $s$ blocks falling into their own unique intervals (or else in excess of $\sum_i x_i$) for $x$ sampled from $\frac{1}{\left\langle x,x \right\rangle}\Theta$ is exactly
        \begin{equation}\label{E: paintbox_transition}
            \int_{\Delta \setminus \{\mathbf{0}\}} \frac{\Upsilon_{b;(k_1,\ldots,k_r;s)}(x)}{\left \langle x,x \right\rangle} d\Theta(x).
        \end{equation}
        That is, the $x$-paintbox transition probability for $x$ sampled from $\frac{1}{\left\langle x,x \right\rangle}\Theta$ is exactly \eqref{E: paintbox_transition}.

        We now show that
        \begin{equation}\label{E: epsilon_generator}
            \lim_{\varepsilon \to 0} \lim_{N \to \infty} \Phi_N(A_\varepsilon)Q_{N,n}^{\varepsilon}(\xi_0^b, \eta) = 2\int_{\Delta \setminus \{\mathbf{0}\}} \frac{\Upsilon_{b;(k_1,\ldots,k_r;s)}(x)}{\left \langle x,x \right\rangle} d\Xi'(x),
        \end{equation}
        which implies that $\lim_{\varepsilon \to 0} \lim_{N \to \infty}\Phi_N(A_\varepsilon)Q_{N,n}^{\varepsilon}$ exists and is the generator of an $n$-$\Xi$-coalescent with $\Xi = 2\Xi'-2\delta_{\mathbf{0}}\Xi'(\mathbf{0})$.
        Conditional on $x \neq \mathbf{0}$, we have
        \begin{equation*}
            \PP\lp{F_{\rm hap}(\chi^{N,n}(1)) = \eta \mid x, \chi^{N,n}(0) = \xi_0^b}
            = 2\Upsilon_{b;(k_1,\ldots,k_r;s)}(x)
            + \Delta_N(x),
            \qquad \text{with } |\Delta_N(x)| \le \frac{K_b}{N}.
        \end{equation*}
        That $|\Delta_N(x)|$ is $O(\frac{1}{N})$ follows simply from the observation that the probability of the $n \ll N\sum_i x_i$ sample lineages falling into each of the intervals of $x$ are asymptotically independent, with corrections at worst of order $\frac{1}{N}$, which can be uniformly controlled by compactness of $\Delta$ and finiteness of $b$.
        Therefore,
        \begin{equation*}
            \frac{1}{c_N}\mathbb{E}\lb{\mathbf 1_{A_\varepsilon}\,
            \PP\lp{F_{\rm hap}(\chi^{N,n}(1)) = \eta \mid x, \chi^{N,n}(0) = \xi_0^b}}
            = \frac{1}{c_N}\,
            \mathbb{E}\lb{\mathbf 1_{A_\varepsilon}\,2\Upsilon_{b;(k_1,\ldots,k_r;s)}(x)}
            + o(1),
        \end{equation*}
        since the $\frac{1}{N}$ coupling error is multiplied by $\Phi_N(A_\varepsilon)/c_N = O_\varepsilon(1)$. By Assumption~\eqref{E: offspring_dist_convergence}, vague convergence on $\Delta\setminus\{\mathbf{0}\}$ yields
        \begin{equation*}
            \lim_{N\to\infty}\frac{1}{c_N}\,
            \mathbb{E}\lb{\mathbf 1_{A_\varepsilon}\,\Upsilon_{b;(k_1,\ldots,k_r;s)}(x)}
            = 2\int_{A_\varepsilon}\frac{\Upsilon_{b;(k_1,\ldots,k_r;s)}(x)}{\langle x,x\rangle}\,\Xi'(dx),
        \end{equation*}
        and letting $\varepsilon \downarrow 0$ by monotone convergence gives
        \begin{equation*}
            2\int_{\Delta\setminus\{\mathbf{0}\}}\frac{\Upsilon_{b;(k_1,\ldots,k_r;s)}(x)}{\langle x,x\rangle}\,\Xi'(dx),
        \end{equation*}
        which coincides with the $x$-paintbox transition probability for $\Xi\!\restriction_{\Delta\setminus\{\mathbf{0}\}} = 2\Xi'\!\restriction_{\Delta\setminus\{\mathbf{0}\}}$. 
    
        We proceed to show now that
        \begin{equation}\label{E: epsilon_c_generator}
            \lim_{\varepsilon \to 0} \lim_{N \to \infty} Q_{N,n}^{\epsilon,c}(\xi_0^b, \eta) =
            \begin{cases}
                0 &, \xi \not\prec \eta\\
                2 &, \xi \prec \eta,
            \end{cases},
        \end{equation}
        where $\xi \prec \eta$ if $\eta$ is the result of a binary merger of blocks in $\xi$. That is, we will show $\lim_{\varepsilon \to 0} \lim_{N \to \infty} Q_{N,n}^{\epsilon,c}$ exists and is equal to the generator of an $n$-$\Xi$-coalescent with $\Xi = \Xi'(\mathbf{0})\delta_{\mathbf{0}}$. It suffices, therefore, to show that non-binary mergers occur with negligible probability and that binary mergers occur with rate $2$.
        For $x\in A_\varepsilon^{\mathrm c}$ we have $\langle x,x\rangle\le\varepsilon$, hence $\max_i x_i\le\|x\|_2\le\varepsilon^{1/2}$. Then
        \begin{equation*}
            \sum_i x_i^3 \;\le\; (\max_i x_i)\sum_i x_i^2 \;\le\; \varepsilon^{1/2}\sum_i x_i^2,
            \qquad
            \Big(\sum_i x_i^2\Big)^2 \;\le\; \varepsilon\,\sum_i x_i^2.
        \end{equation*}
        Let $M_b$ denote the event that, among the $b$ blocks thrown on $[0,1]$ that there is at least one non-binary merger from the $x$-paintbox. Then
        \begin{equation*}
            \PP(M_b\mid x)
            \;\le\; C_b\!\left(\sum_i x_i^3+\Big(\sum_i x_i^2\Big)^2\right)
            \;\le\; C_b\big(\varepsilon^{1/2}+\varepsilon\big)\sum_i x_i^2
            \;\le\; 2C_b\,\varepsilon^{1/2}\sum_i x_i^2
        \end{equation*}
        for some constant $C_b$ depending only on $b$. Therefore, using \(\displaystyle \lim_{N\to\infty}\frac{1}{2c_N}\mathbb{E}\!\big[\sum_i x_i^2\big]=\Xi'(\Delta\setminus{\{\mathbf{0}}\})\),
        \begin{equation*}
            \limsup_{N\to\infty}\frac{1}{2c_N}\,\mathbb{E}\!\big[\mathbf 1_{A_\varepsilon^{\mathrm c}}\PP(M_b\mid x)\big]
            \;\le\; 2C_b\,\varepsilon^{1/2}\,\limsup_{N\to\infty}\frac{1}{2c_N}\mathbb{E}\!\Big[\sum_i x_i^2\Big]
            \;=\; 2C_b \Xi'(\Delta\setminus \{\mathbf{0}\}) \,\varepsilon^{1/2}.
        \end{equation*}
        In particular,
        \begin{equation*}
            \lim_{\varepsilon\downarrow 0}\;\limsup_{N\to\infty}\frac{1}{c_N}\,\mathbb{E}\!\big[\mathbf 1_{A_\varepsilon^{\mathrm c}}\PP(M_b\mid x)\big]=0.
        \end{equation*}
        
        For any unordered pair $\{a,b\}$,
        \begin{equation*}
            \PP(\{a,b\}\text{ coalesce}\mid x)=\sum_i x_i^2+O\!\Big(\sum_i x_i^3\Big),
        \end{equation*}
        and on $A_\varepsilon^{\mathrm c}$ the $O(\sum_i x_i^3)$ error satisfies
        \begin{equation*}
            \frac{1}{c_N}\,\mathbb{E}\!\big[\mathbf 1_{A_\varepsilon^{\mathrm c}}\sum_i x_i^3\big]
            \;\le\; \varepsilon^{1/2}\,\frac{1}{c_N}\,\mathbb{E}\!\Big[\sum_i x_i^2\Big]
            \;\xrightarrow[N\to\infty]{}\; 2\Xi'(\Delta\setminus\{\mathbf{0}\}) \,\varepsilon^{1/2}.
        \end{equation*}
        Thus
        \begin{equation*}
            \frac{1}{c_N}\,\mathbb{E}\!\big[\mathbf 1_{A_\varepsilon^{\mathrm c}}\PP(\{a,b\}\text{ coalesce}\mid x)\big]
            =\frac{1}{c_N}\,\mathbb{E}\!\big[\mathbf 1_{A_\varepsilon^{\mathrm c}}\sum_i x_i^2\big] + o_\varepsilon(1)
            \end{equation*}
        with $o_\varepsilon(1)\to 0$ as $\varepsilon\downarrow 0$ uniformly in $N$. Decomposing,
        \begin{equation*}
            \frac{1}{c_N}\,\mathbb{E}\!\big[\mathbf 1_{A_\varepsilon^{\mathrm c}}\sum_i x_i^2\big]
            =2
            -\frac{1}{c_N}\,\mathbb{E}\!\big[\mathbf 1_{A_\varepsilon}\sum_i x_i^2\big].
        \end{equation*}
        By the normalization and the vague convergence assumption \eqref{E: offspring_dist_convergence}, as $N\to\infty$,
        \begin{equation*}
            \frac{1}{c_N}\,\mathbb{E}\!\Big[\sum_i x_i^2\Big]\;\to\;2\Xi'(\Delta\setminus\{\mathbf{0}\}),
            \qquad
            \frac{1}{c_N}\,\mathbb{E}\!\big[\mathbf 1_{A_\varepsilon}\sum_i x_i^2\big]
            \;\to\; 2\Xi'(A_\varepsilon),
        \end{equation*}
        hence
        \begin{equation*}
            \frac{1}{c_N}\,\mathbb{E}\!\big[\mathbf 1_{A_\varepsilon^{\mathrm c}}\sum_i x_i^2\big]
            \;\to\; 2-2\Xi'(A_\varepsilon)
            \xrightarrow[\varepsilon\downarrow 0]{}\;2\Xi'(\mathbf{0}).
        \end{equation*}

        Combining the contributions from $A_\varepsilon$ from \eqref{E: epsilon_generator} and $A_\varepsilon^{\mathrm c}$ from \eqref{E: epsilon_c_generator} with the decomposition \eqref{E: QNn_decomposition} yields
        \begin{align*}
            \lim_{N \to \infty} Q_{N,n}(\xi,\eta) &= \lim_{\varepsilon \to 0} \lim_{N \to \infty } \Phi_N(A_\varepsilon)\,Q_{N,n}^\varepsilon + \Phi_N(A_\varepsilon^{\mathrm c})\,Q_{N,n}^{\varepsilon, \mathrm c}\\
            &= \begin{cases}
                2\int_{\Delta\setminus\{\mathbf{0}\}}\frac{\Upsilon_{b;(2;b-2)}(x)}{\langle x,x\rangle}\,\Xi'(dx) + 2\Xi'(\mathbf{0}) &, \text{ if } \xi \prec \eta\\
                2\int_{\Delta\setminus\{\mathbf{0}\}}\frac{\Upsilon_{b;(k_1,\ldots,k_r;s)}(x)}{\langle x,x\rangle}\,\Xi'(dx) &, \text{ if } \xi \not \prec \eta
            \end{cases}.
        \end{align*}
        This is precisely the generator of an $n$-$\Xi$-coalescent with $\Xi = 2 \Xi'$ by comparing with \eqref{E: Xi_generator}.
    
    \end{proof}

    \begin{remark}\rm
\cite[Theorem 1.1]{birkner2018coalescent} establishes that,
        if  Assumption~\ref{A:c_N} and 
       \eqref{E: offspring_dist_convergence} hold, then the  rescaled ancestral process converges to the $\Xi$-coalescent governed by 
       $\Xi'\circ \varphi^{-1}$, where $\varphi$ denotes the halving map $\varphi: \Delta \to \Delta$ defined by
       \begin{equation*}
           \lp{x_1,x_2,\ldots} \mapsto \lp{\frac{1}{2}x_1, \frac{1}{2}x_1, \frac{1}{2}x_2, \frac{1}{2}x_2, \ldots}.
       \end{equation*}
Note that this measure  $\Xi'\circ \varphi^{-1}$ (which is under no selfing) is different from the measure $2\Xi'$ in the annealed convergence in  Theorem \ref{T: annealed_limit}. This difference is expected because when $\alpha_N \not\to 1$,  blocks of the ancestral process that enter the same individual may disperse before coalescing, and the annealed limit of the rescaled ancestral process 
will \textit{not} have $Q_n$ as its generator. The reason is that the haploid map $F_{\rm hap}$ in \eqref{E: hap_map}, which appears in the definition of $Q_n$, makes $Q_n$ the generator of the limiting process where two blocks that enter the same individual always coalesce instantaneously in the limit.

    \end{remark}
    
    We refer the  interested reader to \cite[Appendix A]{birkner2018coalescent} for several equivalent formulations to Equation~\eqref{E: offspring_dist_convergence}.

\section{Combinatorics for the general model}

Let $N$ be fixed and consider the general diploid overlapping-generations model described at the beginning of Section \ref{S:GeneralModel}. At time step $0$, let $K:=K_N^{(0)}$, $S:=\sum_{i=1}^N V^{(0)}_{i,i}$,
$v_i:=V^{(0)}_{i,i}$ and $u_i:=\sum_{j\neq i} V^{(0)}_{i,j}$ for $i\in[N]$.

\begin{lemma}[One-step triple coalescence]\label{lem:triple-one-step}
The probability that three distinct sampled lineages from three distinct individuals at time $0$
coalesce into a single ancestor in one step (to time $1$ in the past) is
\begin{align}\label{eq:triple-uncond}
\mathfrak{c}_3 
=
\frac{1}{\binom{N}{3}}\,&
\mathbb{E}\!\bigg[ 
\frac{N-K}{N}\sum_{i=1}^N\!\left(
\frac{1}{4}\binom{v_i}{2}
+\frac{1}{8}\,v_i u_i
+\frac{1}{16}\binom{u_i}{2}
\right) \notag\\
&\;+\;
\sum_{i=1}^N\!\left(
\frac{1}{8}\binom{v_i}{3}
+\frac{1}{16}\binom{v_i}{2}\,u_i
+\frac{1}{32}\,v_i\binom{u_i}{2}
+\frac{1}{64}\binom{u_i}{3}
\right)
\bigg].
\end{align}
Here the outer expectation is over the joint law of $(K_N^{(0)},V^{(0)})$;
$\alpha_N$ influences $\mathfrak{c}_3$ only through this law and does not appear explicitly in
\eqref{eq:triple-uncond}.
\end{lemma}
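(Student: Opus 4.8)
The plan is to condition on the one-step reproduction data and read off the contribution of each possible ``pedigree shape'' of the three sampled lineages, exactly as in the derivation of \eqref{E:cN_general} but now with three lineages. Concretely, I would first fix the reproduction at time-step $0$: the total offspring number $K=K_N^{(0)}$, the symmetric matrix $V=V^{(0)}$ (equivalently $v_i=V_{i,i}$, $u_i=\sum_{j\ne i}V_{i,j}$, together with the set of the $K$ newborn \emph{positions} among the $N$ sites at time $0$ and the assignment of each newborn to the parent pair, or single parent, producing it). Since $V$ and the set of newborn positions are exchangeable, $\mathfrak c_3$ is the same for any choice of three distinct sampled individuals, so it equals $\binom{N}{3}^{-1}$ times the expectation of $\sum_{\{a_1,a_2,a_3\}}\PP(\chi^{N,3}(1)=\mathbf 1_3\mid a_1,a_2,a_3,\text{reproduction data})$; this is where the $\binom{N}{3}^{-1}$ and the symmetric sums over $i$ in \eqref{eq:triple-uncond} come from, and the outer expectation over the law of $(K,V)$ is the final step.

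Next I would describe one backward step at the level of genes: a sampled lineage in a carried-over individual keeps its (copy, individual) label unchanged, whereas a lineage in a newborn $c$ first moves to a parent of $c$ chosen uniformly among the parent vertices adjacent to $c$ in the pedigree --- deterministically to the unique parent if $c$ selfed, and to either parent with probability $\tfrac12$ if $c$ outcrossed --- and then, by Mendelian segregation, to one of that parent's two gene copies chosen uniformly and independently of everything else. The event $\{\chi^{N,3}(1)=\mathbf 1_3\}$ forces the three ancestral genes, hence the three ancestral individuals, to coincide in some common time-$1$ individual $i$. Because distinct carried-over individuals remain distinct, at most one of $a_1,a_2,a_3$ can be a carried-over site, so there are exactly two contributing shapes: one carried-over sample plus two newborns, or three newborns.

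In the first shape the carried-over sampled individual must itself be $i$; each of the two newborns must be one of the $v_i$ selfed children of $i$ or one of the $u_i$ outcrossed children having $i$ as a parent, must be routed to $i$ (probability $1$, resp.\ $\tfrac12$), and must then land on the particular gene copy of $i$ carried by the persisting lineage (probability $\tfrac12$). Summing these Mendelian weights over unordered pairs of such newborns and over $i$, and using that the event ``site $i$ is carried over'' has probability $\tfrac{N-K}{N}$ given $K$ and is independent of $V$, yields the first line of \eqref{eq:triple-uncond}. In the second shape all three newborns must be children of the same $i$; classifying them by how many are selfed versus outcrossed children of $i$, multiplying the routing weights ($1$ per selfed child, $\tfrac12$ per outcrossed child) by the probability that the three independent Mendelian copy-choices in $i$ all coincide, and summing over $i$, produces the second line of \eqref{eq:triple-uncond}. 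Finally, taking the expectation over the joint law of $(K,V)$ gives the stated formula; since $\alpha_N$ never enters any of the per-configuration weights and affects only the distribution of $(K,V)$ (and, via $\sum_i V_{i,i}\sim\mathrm{Bin}(K,\alpha_N)$, the balance between the $v_i$ and the $u_i$), the last assertion of the lemma follows.

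I expect the main obstacle to be the conditional-independence bookkeeping rather than any single computation: one must be careful that ``site $j$ is a carried-over individual'' is independent of the offspring matrix $V$ given $K$ --- so that the prefactor $\tfrac{N-K}{N}$ pulls out cleanly even though a parent may simultaneously persist as a carried-over individual --- and one must check that the Mendelian routing and segregation factors are applied uniformly and consistently across all combinatorial configurations, with no double counting in the three-newborn shape when several outcrossed children share the same parent pair $\{i,j\}$ (their lineages may coalesce at $i$ or at $j$, but these are disjoint events, so the sum over $i$ is exact). Once these points are settled, the remaining enumeration is elementary, and I would then carry it out by a direct case-by-case count.
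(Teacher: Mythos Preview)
Your approach is essentially identical to the paper's: condition on the one-step reproduction data, split into the two shapes ``one carryover plus two newborns'' and ``three newborns'', compute the Mendelian routing/segregation weights in each case, average over the uniform carried-over set to extract the factor $\tfrac{N-K}{N}$, and finally take expectation over $(K,V)$. The independence concern you flag (that the carried-over set is uniform and independent of $V$ given $K$) and the no-double-counting remark for outcrossed children sharing the pair $\{i,j\}$ are exactly the points the paper's argument relies on implicitly.
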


\begin{proof}
Sample uniformly without replacement three distinct individuals at time $0$; there are $\binom{N}{3}$
unordered choices. Condition first on the realized newborn set $B$ (of size $K$), the carried-over set
$~C$ (of size $N-K$), and the parentage matrix $V^{(0)}$. There are two ways to obtain a one-step triple merger:

\smallskip
\noindent
\emph{(i) Two newborns share a parent $i$ and the third sampled individual is that parent $i$ carried over.}
Fix $i\in C$. Among the $n_i:=v_i+u_i$ newborns that have $i$ as a parent, there are
$\binom{v_i}{2}$ pairs where both are selfed by $i$, $v_i u_i$ mixed pairs, and $\binom{u_i}{2}$ pairs
with both outcrossing via $i$. Tracing one step back, to coalesce with the gene copy traced from the
carried-over $i$, each newborn must choose the same copy in $i$:
probabilities $1/2$ (selfed) and $1/4$ (outcrossed), independently of the carried-over copy choice.
Hence the Mendelian factors are $1/4$, $1/8$, and $1/16$, respectively. Summing over $i\in C$ gives the
conditional contribution
\[
\frac{1}{\binom{N}{3}}\sum_{i\in C}\!\left(
\frac{1}{4}\binom{v_i}{2}
+\frac{1}{8}\,v_i u_i
+\frac{1}{16}\binom{u_i}{2}
\right).
\]

\smallskip
\noindent
\emph{(ii) Three newborns share a common parent $i$ (not necessarily carried over).}
For a fixed $i\in[N]$, the three newborns can be all selfed ($\binom{v_i}{3}$ triples),
two selfed and one outcrossed ($\binom{v_i}{2}u_i$ triples), one selfed and two outcrossed
($v_i\binom{u_i}{2}$ triples), or all outcrossed ($\binom{u_i}{3}$ triples). To coalesce one step back,
all three lineages must choose the same copy in $i$, yielding Mendelian factors $1/8$, $1/16$, $1/32$,
and $1/64$, respectively. Summing over $i$ gives the conditional contribution
\[
\frac{1}{\binom{N}{3}}\sum_{i=1}^N\!\left(
\frac{1}{8}\binom{v_i}{3}
+\frac{1}{16}\binom{v_i}{2}\,u_i
+\frac{1}{32}\,v_i\binom{u_i}{2}
+\frac{1}{64}\binom{u_i}{3}
\right).
\]

\smallskip
Adding the above two displays yields the conditional probability of triple coalescent given $V^{(0)},B,C$ as
\[
\frac{1}{\binom{N}{3}}
\left[
\sum_{i\in C}\!\left(
\frac{1}{4}\binom{v_i}{2}
+\frac{1}{8}\,v_i u_i
+\frac{1}{16}\binom{u_i}{2}
\right)
+
\sum_{i=1}^N\!\left(
\frac{1}{8}\binom{v_i}{3}
+\frac{1}{16}\binom{v_i}{2}\,u_i
+\frac{1}{32}\,v_i\binom{u_i}{2}
+\frac{1}{64}\binom{u_i}{3}
\right)
\right].
\]
Now average over the uniformly random carried-over set $C$ of size $N-K$, and 
finally, take expectation with respect to $(K_N^{(0)},V^{(0)})$ to obtain \eqref{eq:triple-uncond}.
\end{proof}

        By $\xi \prec \eta$ we denote that $\eta$ is obtainable from $\xi$ via a single binary merger of blocks of $\xi$. In the following lemma we calculate the one-step transition probabilities for non-binary mergers, and the one-step exit probability for a state $\xi$.

        \begin{lemma}\label{L: one_step_combinatorics_no_triple}
            Suppose that $\mathfrak{c}_3 \in o(c_N)$ as $N \to \infty$. Then for any $\xi\in\En$, it holds that $\mathit{h}_{\xi\xi}^{N,n} = 1 - 2\binom{|\xi|}{2}c_N + o(c_N)$ and that $\mathit{h}_{\xi\eta}^{N,n} \in o(c_N)$ for any $\eta$ that cannot be obtained from $\xi$ via a single binary merger, i.e. where $\xi \not \prec \eta$ 
        \end{lemma}

        \begin{proof}
            Note that there are $\binom{|\xi|}{2}$ possibly binary mergers for any given $\xi$, and each is equally likely by exchangeability. Therefore, it suffices to show that the exit probability after factoring through the haploid map $F_{\rm hap}$ for any $\xi$ is $2c_N\binom{|\xi|}{2} + o(c_N)$. This will follow by finiteness of the state space if for any $\eta$ not obtainable by a single binary merger from $\xi$ that the one-step transition probability from $\xi$ to $\eta$ is $o(c_N)$, which we show below. We proceed by a monotonicity argument.

            Let $r$ denote the largest number of blocks of $\xi$ that are coalesced together into a single block of $\eta$. Suppose to start that $r \geq 3$ after factoring through the haploid map. For any subset of size $3$ of these $r$ blocks, of which there are $\binom{r}{3}$, the probability of that given subset coalescing in a single time-step is $\mathfrak{c}_3$. Therefore the one-step transition probability $h_{\xi\eta}^{N,n}$ that $\eta$ is obtained from $\xi$ is at most $\binom{r}{3}\mathfrak{c}_3$, which is $o(c_N)$ by assumption. This shows that any $\eta$ obtained by coalescing $r \geq 3$ blocks of $\xi$ into a single block happens with $o(c_N)$ probability.

            Suppose then that $r = 2$ and that $\xi \not\prec \eta$. Then there are at least four blocks $C_1, C_2, C_3, C_4$ of $\xi$ coalesced as pairs $C_1 \cup C_2, C_3 \cup C_4$ in $\eta$. We denote this transition by $\PP\lp{2,2\to1,1}$. Note that any pairwise coalescence event, conditional on $V$, is bounded above by
            \begin{equation}
                \frac{1}{4N^2}\sum_{i=1}^N (V_i)_2.
            \end{equation}
            Therefore, where we condition on there being no triple merger, we have that
            \begin{equation*}
                \PP\lp{2,2\to1,1 \,\mid\, V, \text{ no triple}} \leq \mathbb{E}\lb{\lp{\frac{1}{4N^2}\sum_{i=1}^N(V_i)_2}^2}.
            \end{equation*}
            By averaging over $V$ we then have
            \begin{equation*}
                \PP\lp{2,2\to1,1} \leq \binom{4}{3}\mathfrak{c}_3 + \mathbb{E}\lb{\frac{1}{16N^4}\lp{\sum_{i=1}^N (V_i)_2}^2}.
            \end{equation*}
            The second summand is $O(c_N^2) \subset o(c_N)$ by a direct comparison, and by assumption $\mathfrak{c}_3$ is $o(c_N)$. This shows that when $r = 2$ with $\xi \not\prec \eta$ that $h_{\xi\eta}^{N,n} \in o(c_N)$.

            Summing over the single-pair coalescence probabilities among the $\binom{|\xi|}{2}$ unordered pairs yields the exit rate $2\binom{|\xi|}{2}c_N + o(c_N)$, which completes the proof.

        \end{proof}

\end{document}